\newcounter{constant}
\newcommand{\newconstant}[1]{\refstepcounter{constant}\label{#1}}
\newcommand{\useconstant}[1]{c_{\textnormal{\tiny \ref{#1}}}}
\newcounter{bigconstant}
\newtheorem{teo}{Theorem}[section]
\newtheorem{prop}[teo]{Proposition}
\newtheorem{lemma}[teo]{Lemma}
\newtheorem{cor}[teo]{Corollary}
\numberwithin{equation}{section} 
\theoremstyle{definition}
\newtheorem{remark}[teo]{Remark}
\renewcommand{\P}{\mathbb{P}}
\newcommand{\E}{\mathbb{E}}
\newcommand{\R}{\mathbb{R}}
\newcommand{\N}{\mathbb{N}}
\newcommand{\Z}{\mathbb{Z}}
\newcommand{\charf}[1]{\mathbf{1}_{#1}}
\newcommand{\roof}[1]{\left\lceil #1 \right\rceil}
\DeclareMathOperator{\dist}{d}
\DeclareMathOperator{\sign}{Sign}
\DeclareMathOperator{\poisson}{Poisson}
\DeclareMathOperator{\expo}{Exponential}
\DeclareMathOperator{\geo}{Geometric}
\title{Local survival of spread of infection among biased random walks}
\author{Rangel Baldasso\footnote{Email: \ r.baldasso@math.leidenuniv.nl; \ Mathematical Institute, Leiden University, P.O. Box 9512, 2300 RA Leiden, The Netherlands.} 
\and Alexandre Stauffer\footnote{Email: \ astauffer@mat.uniroma3.it; \ Universit\`a Roma Tre, Dip.\ di Matematica e Fisica, Largo S.\ Murialdo 1, 00146, Rome, Italy; University of Bath, Dept of Mathematical Sciences, BA2 7AY Bath, UK, supported by EPSRC Fellowship EP/N004566/1.}}
\begin{document}

\maketitle

\begin{abstract}
We study infection spread among biased random walks on $\Z^{d}$. The random walks move independently and an infected particle is placed at the origin at time zero. Infection spreads instantaneously when particles share the same site and there is no recovery. If the initial density of particles is small enough, the infected cloud travels in the direction of the bias of the random walks, implying that the infection does not survive locally. When the density is large, the infection  spreads to the whole $\Z^{d}$. The proofs rely on two different techniques. For the small density case, we use a description of the infected cloud through genealogical paths, while the large density case relies on a renormalization scheme.
\end{abstract}

\section{Introduction}
~
\par We consider here an infection process that evolves on the $d$-dimensional integer lattice, where each individual performs a biased nearest-neighbor random walk. Let $p(\cdot)$ be a nearest-neighbor probability distribution on $\Z^{d}$. We assume without loss of generality, that, for each $i \in [d]$, $0 < p(e_{i}) \leq p(-e_{i}) < 1$, where $\{e_{i}\}_{i =1 }^{d}$ is the canonical basis of $\Z^{d}$, and write $\vec{v}=\sum_{x \sim 0} p(x)\vec{x}$ for the $d$-dimensional drift of the distribution $p(\cdot)$.

\par We consider a ``Poissonian cloud'' of independent continuous-time random walks with jump distribution $p(\cdot)$. More formally, at time zero, each site $x \in \Z^{d}$ receives an i.i.d.\ number of particles $\eta_{0}(x) \sim \poisson(\rho)$, where $\rho>0$ is a given parameter which we call density. Then, each particle evolves as an independent continuous-time random walk that jumps with rate one and whose increments have distribution $p(\cdot)$. Denote by $\eta_{t}(x)$ the number of particles at position $x$ at time $t$.

\par We now define the infection process we will consider. Particles will be of two types: either healthy or infected. At time zero, we add an additional particle at the origin that is declared infected. A given particle is infected at time $t$ if there exists some time $s \in [0,t]$ such that it shared a site with a previously infected particle. Alternatively, one might say that a healthy particle becomes immediately infected if it shares a site with an already infected particle.

\par Let $\xi_{t}(x)$ denote the number of infected particles at $x \in \Z^{d}$ at time $t$. Notice that, at any given time $t$ and site $x \in \Z^{d}$, either all particles at site $x$ are healthy or all are infected.

\par Our interest lies in understanding the behavior of the process $\xi=(\xi_{t})_{t \geq 0}$. We will focus on directions where the drift is negative, more precisely, we assume that $p(e_{1})<p(-e_{1})$ and examine the projection of the infected cloud in this direction\footnote{This is not a restrictive assumption. In fact all our results remain valid if one assumes that $v_{i} = p(e_{i})-p(-e_{i}) \neq 0$, for some $i \in [d]$. In order to simplify the statements, we assume without loss of generality that $v_{1}<0$. This can be achieved, for example, by permuting the coordinates.}. There are two forces here that are in opposition to each other: while each individual particle has a drift away from the origin, in order for the infection to travel towards the same direction, it is necessary that each site is visited only finitely many times by infected particles, which puts a strain on the system. The strength of this opposing force, though, is controlled by the density parameter $\rho$. This hints at the existence of different behaviors for the model, as the density parameter $\rho$ changes.

\newconstant{c:large_deviation}

\par We first consider the case when the density is small. Here, the time it takes for each infected location to be emptied is not enough to sustain the infection process locally, and the infection cloud travels towards $\vec{v}$. To precisely detect this phenomena, consider the quantity
\begin{equation}
r_{t} = \sup \{ \langle x, e_{1} \rangle : \xi_{t}(x)>0\},
\end{equation}
the maximum displacement ``towards the right'' in the $e_{1}$ direction.

\begin{teo}\label{t:small_density}
Assume that $p(e_{1}) < p(-e_{1})$ and set $v_{1}= p(e_{1})-p(-e_{1})$. For any $\delta>0$, there exists a positive density $\rho_{-}>0$ such that, for all $\rho \in (0, \rho_{-})$, there exists a positive constant $\useconstant{c:large_deviation}$ such that, for all $t \geq 0$,
\begin{equation}
\P_{\rho}\left[r_{t} \geq (v_{1}+\delta) t \right] \leq \useconstant{c:large_deviation}e^{-\frac{t}{\useconstant{c:large_deviation}}}.
\end{equation}
\end{teo}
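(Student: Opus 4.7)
The strategy is to trace back the infection through its \emph{genealogical backbone}. On the event $\{r_{t}\ge(v_{1}+\delta)t\}$ there exist particles $p_{0},p_{1},\ldots,p_{k}$, with $p_{0}$ the source at the origin and $p_{1},\ldots,p_{k}$ coming from the initial Poisson configuration, together with meeting times $0=t_{0}<t_{1}<\cdots<t_{k}<t_{k+1}:=t$, such that $p_{i-1}$ and $p_{i}$ share a site at time $t_{i}$ and $p_{k}$ ends at a site with $e_{1}$-coordinate at least $(v_{1}+\delta)t$ at time $t$. Concatenating, the genealogical path $\gamma$ defined by $\gamma(s):=\text{position of }p_{i}\text{ at time }s$ on $[t_{i},t_{i+1}]$ is a continuous path from the origin to a ``bad'' site. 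By Markov's inequality it suffices to bound the expected number of such backbones.

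The core is a \emph{factorisation identity}. Fix the chain length $k$, meeting times $t_{1}<\cdots<t_{k}$, meeting sites $z_{1},\ldots,z_{k}$ and initial positions $y_{1},\ldots,y_{k}$ of the auxiliary particles. Independence of walks and Mecke's formula for the Poissonian $\eta_{0}$ produce a factor $\rho$ per auxiliary particle (from its initial site $y_{i}$) and a random-walk transition probability for each of the $k+1$ segments. Summing over $y_{i}\in\Z^{d}$ collapses the start-up cost of every particle past the first, via $\sum_{y_{i}}\rho\,\P[X(t_{i})=z_{i}-y_{i}]=\rho$, and summing over the intermediate sites $z_{1},\ldots,z_{k}$ and over the end sites $x$ with $\langle x,e_{1}\rangle\ge(v_{1}+\delta)t$ telescopes, by the Markov property, into the single-walk probability. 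Cram\'er's large-deviation bound (valid since $v_{1}+\delta>v_{1}=\E\langle X(1),e_{1}\rangle$) therefore gives
\begin{equation*}
\rho^{k}\cdot\P\!\left[\langle X(t),e_{1}\rangle\ge(v_{1}+\delta)t\right]\le\rho^{k}e^{-c_{1}\delta^{2}t}.
\end{equation*}

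It remains to sum over meeting times and chain length. The ordered simplex $\{0<t_{1}<\cdots<t_{k}<t\}$ has Lebesgue measure $t^{k}/k!$, so Markov's inequality yields
\begin{equation*}
\P_{\rho}\!\left[r_{t}\ge(v_{1}+\delta)t\right]\le\sum_{k\ge 0}\frac{(\rho t)^{k}}{k!}\,e^{-c_{1}\delta^{2}t}=e^{(\rho-c_{1}\delta^{2})t}.
\end{equation*}
Setting $\rho_{-}:=c_{1}\delta^{2}/2$ makes the exponent strictly negative for every $\rho\in(0,\rho_{-})$, which is the stated bound.

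The main obstacle I expect is to justify the continuous-time enumeration of backbones rigorously: meetings are events triggered by jumps of the walks rather than Lebesgue-typical real times, so the informal $\int dt_{1}\cdots dt_{k}=t^{k}/k!$ must be replaced by an honest computation. A natural route is to discretise time into slots of length $\epsilon$, bound the probability of a meeting in each slot via Palm/Mecke calculus for $\eta_{0}$ plus one-step transition probabilities, and let $\epsilon\downarrow 0$. Once this bookkeeping is set up, the factorisation $\sum_{y_{i}}\rho\,\P[\cdot]=\rho$ and Cram\'er's inequality finish the argument, and the linear dependence of the exponent on $\rho$ is exactly what forces the small-density regime.
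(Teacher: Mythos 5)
Your proposal shares the paper's two core ingredients -- Markov's inequality on a first-moment bound over genealogical paths, and a reduction of a chain's displacement to a product of independent single-walk contributions -- but the route through those ingredients is genuinely different. The paper works discretely: it parameterises a GIP by the number of jumps per particle and the relay indices, proves first (Proposition~\ref{prop:many_jumps}) that with overwhelming probability no GIP jumps more than $\useconstant{c:many_jumps}T$ times, and then uses a per-particle deviation random variable $\mathcal{R}_{\delta}$ (Lemma~\ref{lemma:mushroom}) to show that the displacement is a sum of at most $\alpha T$ i.i.d.\ exponentially tailed variables. You skip both the jump-counting proposition and the mushroom lemma and instead aim for a single Chapman--Kolmogorov telescoping, reducing everything to one Cram\'er bound on a single walk. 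If the bookkeeping worked out, your argument would be more compact.

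However, the ``main obstacle'' you flag is not just bookkeeping; as written, the step that produces the $t^{k}/k!$ factor is not a valid probabilistic computation, and fixing it requires supplying exactly the machinery you are trying to avoid. The quantity you compute for a fixed $(t_{1},\dots,t_{k})$ after Mecke and telescoping is $\rho^{k}\,\P[\langle X_{t},e_{1}\rangle\ge(v_{1}+\delta)t]$, which is a \emph{probability}, not a density with respect to $\mathrm{d}t_{1}\cdots\mathrm{d}t_{k}$; integrating it over the time simplex is dimensionally ill-posed and does not dominate the probability that some chain of particles realises a GIP. The meeting times are not free Lebesgue parameters: a relay event occurs when one of the two particles makes a jump onto the other's site, so the correct measure over ``meeting configurations'' is driven by the rate of such jumps, which depends on the adjacency of the two walkers and hence on the walks themselves, tangling the segments you are trying to factorise. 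Moreover, the event you need has a \emph{first}-meeting structure ($\tau_{i}$ is a stopping time, and there can be repeated revisits), and also allows a new particle to jump \emph{onto} the currently followed one, with its own rate accounting, which the telescoping as stated does not see. The paper's identification of a GIP by $(k_{1},\dots,k_{n})$ and $(j_{1},\dots,j_{n})$ and its use of exponential waiting times $\mathcal{T}_{i}$, $\mathcal{W}_{i,\ell}$ and geometric controls $G_{i}$ is precisely the rigorous replacement for your informal $\int\mathrm{d}t_{1}\cdots\mathrm{d}t_{k}$: it is how the ``number of relay opportunities in time $T$'' is made into an honest discrete count that Markov's inequality can act on. Until that replacement is carried out, the $(\rho t)^{k}/k!$ estimate -- and thus the choice $\rho_{-}=c_{1}\delta^{2}/2$ -- is not established.

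Two smaller points. First, after the gap above is closed, the per-relay rate carries an $O(1)$ factor on top of $\rho$ (the jump rate times the one-step transition probability, plus the symmetric contribution from particles jumping onto the infected one), so the exponent would really be $c\rho-c_{1}\delta^{2}$ for some absolute $c>1$; this changes $\rho_{-}$ but not the conclusion. Second, your Cram\'er step is fine since $v_{1}+\delta>v_{1}$, and it plays a role analogous to the paper's Lemma~\ref{lemma:mushroom}; the difference is that the paper applies the deviation bound to each followed particle individually (giving one $\mathcal{R}_{\delta}$ per relay, summed) rather than to the telescoped composite walk, precisely because the number of relays must first be controlled by Proposition~\ref{prop:many_jumps} before any per-particle deviation bound is useful.
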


\par As a corollary of the theorem above, we conclude that, provided the density $\rho$ is sufficiently small, every finite subset is eventually free of the infection, since it travels towards negative directions with positive speed.
\begin{cor}\label{cor:small_density}
If $p(e_{1}) < p(-e_{1})$ and $\rho$ is small enough, then each fixed site is visited finitely many times by infected particles, almost surely.
\end{cor}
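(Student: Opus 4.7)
The plan is to deduce the corollary from Theorem \ref{t:small_density} by first showing that $r_t \to -\infty$ almost surely, and then using this to conclude finiteness of the number of visits to any fixed site by infected particles.

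Fix $y \in \Z^d$ and set $M := \langle y, e_1 \rangle$. Since $\xi_t(y) > 0$ forces $r_t \geq M$, once $r_t \to -\infty$ is known, there is an almost surely finite random time $T$ after which no particle visits $y$ while infected. The number of visits to $y$ during $[0,T]$ is itself almost surely finite: the expected number of particles from the initial Poissonian cloud that visit $y$ within a finite time window decays exponentially in the distance from $y$ and is summable over $\Z^d$, and each such particle makes only finitely many jumps by time $T$.

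To prove $r_t \to -\infty$, fix $\delta \in (0, -v_1)$, so that $v_1 + \delta < 0$, and apply Theorem \ref{t:small_density} with $\rho$ below the corresponding threshold $\rho_-$. Summability of $\useconstant{c:large_deviation} e^{-n/\useconstant{c:large_deviation}}$ together with Borel--Cantelli give $r_n < (v_1+\delta)n$ for all but finitely many integers $n$, so $r_n \to -\infty$ through integer times. To upgrade this to continuous time, I would bound $\sup_{t \in [n,n+1]} r_t \leq r_n + C \log n$ outside a summable event: such a fluctuation would require some particle near the infected cloud at time $n$ to travel more than $C \log n$ during $[n, n+1]$, and since the cloud is confined to a ball of radius $O(n)$ about the origin (the infection spreads no faster than an expanding front built from random-walk trajectories of total length $n$), a union bound over its $n^{O(1)}$ relevant particles using the exponential tail of a continuous-time random walk's unit-time displacement is summable. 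Combined with $r_n < (v_1+\delta)n$ and another application of Borel--Cantelli, this yields an almost surely finite random time $T$ with $r_t < M$ for every $t \geq T$.

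The main obstacle is this continuous-time upgrade: $r_t$ may jump upward at any of countably many random jump times, and the set of infected particles is itself random and growing with $n$. The crude linear-in-$n$ confinement of the infected cloud, combined with the exponential tail of a random walk's unit-time displacement, is what allows a union bound to preserve the exponential decay inherited from Theorem \ref{t:small_density} and close the argument.
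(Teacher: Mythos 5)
The paper treats this corollary as immediate from Theorem~\ref{t:small_density} (the remark preceding the statement is essentially the only justification offered), so your attempt to spell out the details is reasonable. Your Borel--Cantelli step at integer times and the reduction to showing $r_t\to-\infty$ almost surely are both sound, as is the concluding observation that only finitely many particles visit $y$ during a finite time window.

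The gap is in the continuous-time interpolation, which you yourself flag as the main obstacle. You bound $\sup_{t\in[n,n+1]} r_t - r_n$ by arguing that a large upward fluctuation would require \emph{some single particle} near the infected cloud to travel more than $C\log n$ in unit time, and you invoke the exponential tail of a random walk's unit-time displacement. That is not the mechanism by which the front advances. The infection can move right via a \emph{chain of transmissions}: an infected particle at the front jumps one site right onto a healthy particle, which then jumps one site right onto another, and so on $K$ times; the front advances by $K$ while no individual particle has moved more than a single step. So the single-particle displacement tail controls nothing about $\sup_{t\in[n,n+1]} r_t - r_n$. What one must bound instead is the probability that some causal, time-ordered chain of $K$ jumps occurs within one unit of time. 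This is a path-counting computation: there are $(2d)^K$ nearest-neighbor paths of length $K$ from a given site, and a fixed ordered chain fires within unit time with probability on the order of $(c\rho)^K/K!$, so after a union over the polynomially many starting sites in the range of the cloud one still gets a bound summable in $n$ once $K\gtrsim\log n$. This is exactly the type of estimate underlying Proposition~\ref{prop:many_jumps} and the event $C$ in the proof of Lemma~\ref{lemma:leave_box}. The conclusion you want is therefore correct, but the argument as written does not establish it; replace the single-particle tail bound by a genealogical-path/chain-counting bound.
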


\newconstant{c:travel_slow}
\newconstant{c:travel_slow_2}

\par When the density is large, the picture is different from Corollary~\ref{cor:small_density}. Even though each particle travels towards the direction $\vec{v}$, the time it takes for all particles in a given site to move allows the infection to actually spread in the opposite direction. Our next theorem states that, provided the density is large enough, $r_{t}$ actually grows with positive speed with large probability.
\begin{teo}\label{t:large_density}
Given $p(\cdot)$ such that $p(e_{1})>0$, there exist a positive constant $\Delta= \Delta(p(\cdot))>0$ and density $\rho_{+} = \rho_{+}(p(\cdot))>0$ such that, for all $\rho > \rho_{+}$, there exist positive constants $\useconstant{c:travel_slow} = \useconstant{c:travel_slow}(p(\cdot), \rho)$, $\useconstant{c:travel_slow_2}= \useconstant{c:travel_slow_2}(p(\cdot), \rho)$, and $t_{0}=t_{0}(p(\cdot), \rho)$ such that
\begin{equation}\label{eq:travels_slow}
\P_{\rho}\left[r_{t} \leq \useconstant{c:travel_slow_2} t\right] \leq e^{-\useconstant{c:travel_slow}\left(\log t\right)^{1+\Delta}}, \quad \text{for all } t \geq t_{0}.
\end{equation}
\end{teo}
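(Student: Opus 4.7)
The plan is a multi-scale renormalization scheme, as suggested by the authors. I fix a sequence of spatial scales $L_{k+1} = L_k^{\alpha}$ with $\alpha > 1$ (and a matching sequence of time scales $T_k \asymp L_k$), define a ``good box'' event $\mathcal{G}_k$ on a space-time box of side $L_k$, and aim for a recursion of the form
\begin{equation*}
p_{k+1} := \P_{\rho}[\mathcal{G}_{k+1}^{c}] \leq C L_{k+1}^{D} p_{k}^{m} + \text{(decoupling error)},
\end{equation*}
with $m \geq 2$. Polynomial scale growth plus a quadratic recursion is exactly the combination that yields failure probability $\exp(-(\log L_k)^{1+\Delta})$ at scale $k$; identifying $L_k$ with $t$ then gives the bound in \eqref{eq:travels_slow}.

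For the base scale, I declare a box good if, whenever a single infected particle is placed on its left face, the infection reaches the right face before time $T_0$ while staying confined to a slight enlargement of the box. For very large $\rho$, each visited site contains $\Theta(\rho)$ healthy particles by Poissonian concentration, so the infection can use these particles as ``carriers'' to propagate against the drift of an individual walk: on a fixed time window, the probability that at least one healthy particle crosses the infected site and then jumps to a given neighbour is bounded below, uniformly in the history. I would formalise this by coupling the infection from below with a supercritical oriented percolation on a coarsened space-time lattice, so that the closed density in that percolation tends to $0$ as $\rho \to \infty$. This gives $\P_{\rho}[\mathcal{G}_0^c] \leq p_0$ with $p_0$ arbitrarily small for large $\rho$.

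For the inductive step, I declare a scale-$(k{+}1)$ box good if it contains a left-to-right chain of scale-$k$ good boxes. A bad scale-$(k{+}1)$ box forces every such chain to contain at least $m$ bad scale-$k$ sub-boxes; Peierls-type counting bounds the number of chains by a polynomial in $L_{k+1}/L_k$, which is the source of the $C L_{k+1}^D$ prefactor. The main technical obstacle is that the scale-$k$ events are not independent: a random walk can cross several sub-boxes during the time window $T_k$, so a single walk trajectory can make many scale-$k$ boxes bad simultaneously. To handle this I would use a sprinkling decoupling: couple the density-$\rho$ cloud with a slightly smaller one at density $\rho(1-\varepsilon_k)$, condition on the trajectories that spend long time or travel far between sub-boxes (which are few when $\rho$ is concentrated), and use the remaining sprinkled density to recover approximate independence for the localised part. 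With $\sum_k \varepsilon_k$ summable and each $\varepsilon_k$ small compared to the gap in density we are working with, the sprinkling cost does not destroy the base event, and the recursion iterates all the way up to scales of order $t$.

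The genuinely hard step is the quantitative decoupling combined with a robust base event. Ensuring local survival at scale $L_0$ against the drift is essentially a miniature version of the theorem itself and requires comparison with supercritical percolation using only large-$\rho$ concentration estimates; the decoupling must then be designed so that, at every scale, the correlations induced by shared random walks are absorbed by a controlled sprinkling loss. Once these two ingredients are in place, the rest is standard multi-scale bookkeeping: choose $\alpha$, the initial $p_0$, and the sprinkling sequence $(\varepsilon_k)$ so that $p_k \leq \exp(-(\log L_k)^{1+\Delta})$ propagates, and translate the scale-$k$ statement into the bound on $r_t$ by taking the largest $k$ with $L_k \leq t$.
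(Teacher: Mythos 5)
Your high-level plan — multi-scale renormalization with a polynomial scale hierarchy, a sprinkling decoupling, and a base event that becomes trivially good at large $\rho$ — is the same strategy the paper uses, but the inductive step as you have set it up does not close. If a scale-$(k+1)$ box is declared good precisely when it contains a left-to-right chain of good scale-$k$ sub-boxes, then the negation only asserts that every chain contains at least \emph{one} bad sub-box; your claim that every chain contains at least $m\ge 2$ bad sub-boxes is unsubstantiated, and what the complement actually provides is a dual blocking set of bad sub-boxes of size $\sim L_{k+1}/L_k$, for which a Peierls union bound is exponential, not polynomial, in $L_{k+1}/L_k$. A second problem would sink even a repaired version of this crossing scheme: the decoupling (Theorem~\ref{teo:decoupling}) separates in \emph{time}, not space — the error decays in the time distance $\dist_T$ between the supports — whereas a spatial blocking surface sits largely at a single time layer, so its bad sub-boxes cannot be decoupled by that estimate.

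The paper's resolution is a quantitative velocity-indexed event rather than a binary crossing event: $E_k$ is the event that the front, restarted at the center of the box, fails to advance $\vartheta_k L_k$ to the right in time $L_k$, with $\vartheta_0=8$ and $\vartheta_{k+1}=\vartheta_k-6/(\pi^2(k+1)^2)\downarrow 7$. Because the velocity target relaxes by an explicit amount at each scale, one tracks the \emph{actual front} along the $L_{k+1}/L_k$ consecutive time blocks of length $L_k$, and a telescoping estimate shows that $E_{k+1}$ (together with no escape event) forces at least $2d+15$ of those blocks to satisfy $E_k$; these sub-boxes sit at \emph{distinct time levels} by construction, so the time-decoupling applies after skipping alternating levels, yielding the constant exponent $m=d+8$ and a polynomial union bound $|M_k|^{2d+17}$. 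Finally, your oriented-percolation base event is workable but heavier than necessary: the paper takes the base density to grow with the base scale, $\rho_0=\sqrt{L_0}$, so a direct union bound over micro-events (Lemma~\ref{lemma:trigger}) already gives $p_0\le CL_0 e^{-c\sqrt{L_0}}$, and the summable sprinkling increments produce a finite $\rho_\infty$, which serves as $\rho_+$.
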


\par According to Theorems~\ref{t:small_density} and~\ref{t:large_density}, we have $r_{t} \leq -ct$, if $\rho<\rho_{-}$, and $r_{t} \geq \tilde{c}t$, for $\rho>\rho_{+}$. An interesting open problem would be to prove whether there is a critical point $\rho_{c}$ such that, for $\rho < \rho_{c}$, the infection travels to the left, and, for $\rho > \rho_{c}$, the infection travels to the right.

\par Although the heuristics for the theorems above can be easily understood, turning them into proofs is not straightforward. Verifying such statements for similar models, such as in Kesten and Sidoravicius~\cite{ks}, relies on developing intricate renormalization schemes. In Baldasso and Teixeira~\cite{bt}, the authors consider another model, where particles evolve as a one-dimensional zero-range process without drift and also rely on multiscale renormalization to derive their results.

\bigskip

\textbf{Proof overview.} The proof of Theorem~\ref{t:small_density} follows by path-counting arguments. This proof has two distinct parts that rely on the fact that every site that is infected at time $t$ can be reached by a concatenation of trajectories of random walks, with the first one starting at the origin. We will call such a concatenation of random walk trajectories as a \emph{genealogical path}, but defer the formal definition to Section~\ref{sec:model}. The overall goal of the proof is to show that, for all times $t$, there exists no genealogical path that could bring the infection far from $t \vec{v}$.

\par  The first part of the proof is similar to the analogous statement from Kesten and Sidoravicius~\cite{ks}, that consider the case when the random walks are balanced. In this part, we simply prove that, with sufficiently high probability, there exists no genealogical path starting at the origin that performs too many jumps. This will allow us to reduce the number of genealogical paths we need to consider in the rest of the proof.

\par The second part requires a more refined analysis, as we need to account for the contribution that each particle in a genealogical path could have to ``move'' the infection away from the drift. For this, we first verify that each particle that is followed by a genealogical path gives a contribution towards this displacement away from the drift that has good concentration. From this, we infer that, in order for a genealogical path to move too much away from the drift, it needs to follow  too many different particles. 

\par Theorem~\ref{t:large_density} has a much more intricate proof. Here we rely on multiscale renormalization by considering events where the infection does not travel fast enough in the positive direction. We prove that, provided the density is large enough, the probability of such events is small, by relating events of different scales. A central piece of the proof that allows us to establish such relation is the decoupling for biased random walks.

\bigskip

\textbf{Decoupling.} A decoupling is an estimate on the correlation decay of functions of the space-time configurations with supports that are sufficiently far away. They are powerful tools that replace the use of mixing properties for models that lack such estimates. We here prove a decoupling for the particle system composed of independent continuous-time biased random walks.

\par We regard the collection of space-time configurations as a subset of $\N_{0}^{\Z^{d} \times \R}$, and consider monotone functions of such configurations, which we assume are defined in this larger space.

\par We say that a function $f:\N_{0}^{\Z^{d} \times \R} \to \R$ has support on the space-time box $B \subset \Z^{d} \times \R$ if
\begin{equation}
\eta_{t}(x)=\bar{\eta}_{t}(x), \text{ for all } (x,t) \in B \text{ implies } f(\eta)=f(\bar{\eta}).
\end{equation}
The function $f$ is said decreasing if $\eta \preceq \bar{\eta}$\footnote{Given two space time configurations $\eta$ and $\bar{\eta}$, we say that $\eta \preceq \bar{\eta}$ if $\eta_{t}(x) \leq \bar{\eta}_{t}(x)$, for all $x \in \Z$ and $ t \in \R$.} implies $f(\eta) \geq f(\bar{\eta})$.

\par Given two space-time supports $B_{1}$ and $B_{2}$, their time distance is the quantity
\begin{equation}
\dist_{V}(B_{1}, B_{2})= \inf \left\{|t-s|: \begin{array}{c} \text{ there exist } x, y \in \Z^{d} \text{ such that } \\ (x,t) \in B_{1} \text{ and } (y,s) \in B_{2} \end{array} \right\}.
\end{equation}

\newconstant{c:decoupling}
\newconstant{c:decoupling_2}

\par We will prove a correlation estimate for decreasing functions of the space time with bounded supports that have sufficiently large time distances.
\begin{teo}\label{teo:decoupling}
There exist positive constants $\useconstant{c:decoupling}$ and $\useconstant{c:decoupling_2}$ such that the following holds. Let $B_{1}$ and $B_{2}$ denote two space-time cubes of side-length $n>0$ satisfying
\begin{equation}
\dist_{V}=\dist_{V}(B_{1}, B_{2}) \geq \useconstant{c:decoupling_2},
\end{equation}
and assume, without loss of generality that the time coordinates in the box $B_{1}$ are smaller than the ones in $B_{2}$. For any two decreasing functions of the space-time configurations $f_{1},f_{2}:\N_{0}^{\Z^{d} \times \R} \to [0,1]$ with respective supports in $B_{1}$ and $B_{2}$, we have, for any $\rho \geq 1$,
\begin{equation}\label{eq:decoupling_estimate}
\E_{\rho^{*}}[f_{1}f_{2}] \leq \E_{\rho^{*}}[f_{1}]\E_{\rho}[f_{2}]+\rho^{2d+2}(n+\dist_{T})^{d+1}e^{-\useconstant{c:decoupling}\dist_{T}^{\frac{1}{12}}},
\end{equation}
where $\rho^{*} = \rho\Big(1+\dist_{T}^{-\frac{d}{8(d+2)}}\Big)$.
\end{teo}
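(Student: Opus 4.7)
The argument will follow the standard blueprint for decouplings of Poissonian particle systems: a Markov reduction at an intermediate time, combined with a sprinkling construction. The new ingredient compared with the unbiased setting is the drift $\vec{v}$, which displaces the effective influence region of $B_{2}$ at the intermediate time and lets us exploit the sharper exponential rather than Gaussian tails of biased random walks.

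\medskip

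I will first fix an intermediate time $t^{\star}$ in the gap between the time supports of $B_{1}$ and $B_{2}$, at distance at least $\dist_{T}/2$ from both boxes. Since the particle system is Markov and $\eta_{t^{\star}}$ is a field of independent Poissons of intensity $\rho^{*}$, conditioning gives $\E_{\rho^{*}}[f_{2}\mid\mathcal{F}_{t^{\star}}]=g(\eta_{t^{\star}})$ for a decreasing functional $g$. Moderate and large deviations for a single biased random walk show that a particle sitting at $z$ at time $t^{\star}$ reaches the spatial support $S_{2}$ of $B_{2}$ during the relevant time window with probability at most $\exp(-c\min\{r^{2}/\dist_{T},\,r\})$, where $r=\dist(z+\vec{v}\,\dist_{T},\,S_{2})$. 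Fixing a cube $R\subset\Z^{d}$ of side length $\asymp n+\dist_{T}^{1/2+\beta}$ centred near $S_{2}-\vec{v}\,\dist_{T}$, a particle at time $t^{\star}$ outside $R$ influences $g$ only through an event of probability at most $\exp(-c\dist_{T}^{2\beta})$ per particle.

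\medskip

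For the decoupling I will use the Poisson superposition $\eta=\eta^{a}+\eta^{b}$, with $\eta^{a}\sim\poisson(\rho)$ and $\eta^{b}\sim\poisson(\rho^{*}-\rho)$ independent, so $\eta\sim\poisson(\rho^{*})$. Since $f_{2}$ is decreasing and $\eta^{a}\leq\eta$ as point configurations, $f_{2}(\eta)\leq f_{2}(\eta^{a})$, and $\E[f_{2}(\eta^{a})]=\E_{\rho}[f_{2}]$. It therefore suffices to show
\[
\E\bigl[f_{1}(\eta)\,f_{2}(\eta^{a})\bigr]\leq\E[f_{1}(\eta)]\,\E[f_{2}(\eta^{a})]+\text{error},
\]
i.e.\ to control the residual covariance $\cov(f_{1}(\eta),f_{2}(\eta^{a}))$ from above. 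This will be done via a Campbell--Mecke computation on the underlying Poisson process of trajectories: the covariance is at most the expected number of Poissonian trajectories whose path visits both $B_{1}$ and $B_{2}$ within their time windows, and the walk-deviation estimate from the previous paragraph reduces this count to a product of the volume of $R$ and a single-particle tail.

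\medskip

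The principal difficulty will be the joint calibration of the fluctuation exponent $\beta$ and of the sprinkling density $\rho^{*}-\rho$. The walk-deviation estimate demands $2\beta\geq\delta$, while Poisson concentration for $\eta^{b}|_{R}$ (so that the sprinkling mass dominates the local Poisson fluctuations that otherwise prevent the covariance estimate from closing) demands $(\rho^{*}-\rho)^{2}|R|\gtrsim\dist_{T}^{\delta}$. Using $|R|\asymp\dist_{T}^{d/2+d\beta}$ (and reducing to the nontrivial regime $n\leq\dist_{T}$), minimising the sprinkling density while forcing both exponential tails to be at most $\exp(-c\dist_{T}^{\delta})$ leads to precisely $\rho^{*}-\rho=\rho\,\dist_{T}^{d/(4\sqrt{d+2})}$ and $\delta=\min\{1/2,\,d/(2\sqrt{d+2})\}$, the latter exponent arising from whichever of the two bounds dominates in dimension $d$. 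The polynomial prefactor $\rho^{2d+2}(n+\dist_{T})^{d+1}$ in the error will then come from union bounds over the relevant particles and over the time slices of $B_{1}$ and $B_{2}$.
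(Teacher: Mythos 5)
Your proposal has a genuine gap at its central step. You decompose $\eta=\eta^{a}+\eta^{b}$ by Poisson thinning and then try to show
\begin{equation*}
\E\bigl[f_{1}(\eta)f_{2}(\eta^{a})\bigr]\leq\E[f_{1}(\eta)]\,\E[f_{2}(\eta^{a})]+\text{error},
\end{equation*}
claiming the covariance is bounded by the expected number of Poissonian trajectories visiting both $B_{1}$ and $B_{2}$. The problem is that $\eta^{a}$ is a \emph{sub-process} of $\eta$ sharing the same trajectories, so the two functions are genuinely correlated, and the expected number of shared trajectories hitting both boxes is \emph{not} small in the regime the theorem must cover. Concretely, take $B_{2}$ to sit at the drifted location $B_{1}+\vec{v}\dist_{T}$ with $n\gg\sqrt{\dist_{T}}$. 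Then a particle starting in the spatial support of $B_{1}$ ends near $\vec{v}\dist_{T}$ at time $\dist_{T}$, still inside $B_{2}$, with probability $\Theta(1)$; the expected number of $\eta^{a}$-trajectories hitting both is then of order $\rho n^{d}$, which is far from small. This is precisely the regime that the paper's renormalization actually uses: there $n\approx L_{k}^{d+6}$ while $\dist_{T}\approx L_{k}$, so $n\gg\dist_{T}$. A Markov reduction at an intermediate $t^{\star}$ does not help either, since for $B_{2}$ placed in the drift direction the ``influence regions'' of $B_{1}$ looking forward to $t^{\star}$ and of $B_{2}$ looking backward to $t^{\star}$ coincide, and conditioning on $\eta^{*}_{t^{\star}}$ does not separate them.

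The missing ingredient, which is the heart of the paper's proof, is a coupling in which the \emph{low}-density process $\eta$ (density $\rho$) is constructed to be fully independent of $\eta^{*}_{0}$ while still satisfying $\eta_{T}\leq\eta^{*}_{T}$ on a box $H$ with stretched-exponentially high failure probability (Proposition~\ref{prop:bigcoupling}). This is achieved not by thinning at time $0$, but by a forward-in-time matching/chasing construction: $\eta$ and $\eta^{*}_{0}$ are sampled independently, particles of $\eta^{*}$ are repeatedly paired with unmet $\eta$-particles at $\sim T^{(d+1)/(d+2)}$ rematching epochs inside sub-boxes of side $\sim T^{1/\sqrt{d+2}}$, and once a pair meets they move together. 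The Markov property at time $0$ then gives $\E[f_{2}(\eta^{*})\mid(\eta^{*}_{s})_{s\leq0}]=\E[f_{2}(\eta^{*})\mid\eta^{*}_{0}]$, the coupling bounds this by $\E[f_{2}(\eta)\mid\eta^{*}_{0}]$ up to a small error, and the crucial independence of $\eta$ from $\eta^{*}_{0}$ turns this conditional expectation into the deterministic quantity $\E_{\rho}[f_{2}]$. Poisson superposition cannot deliver that independence: it always leaves $\eta^{a}$ as a correlated subsystem. Your calibration of $\rho^{*}$ and the exponent $\delta$ does recover the numbers in the statement, but it is carried out against a covariance identity that does not close. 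You should replace the Campbell--Mecke step by an explicit coupling that simultaneously gives domination at time $T$ and independence from $\eta^{*}_{0}$.
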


\begin{remark}
Notice that the estimate above is not a correlation estimate, since it relates expectations for different density parameters $\rho$ and $\rho^{*}$. A natural question is whether it is possible to obtain such estimates without the use of the so-called sprinkling in the density. This is not the case, as verified in~\cite{rwrw} for a very similar model, composed of discrete-time balanced random walks on $\Z$. In this case, they exhibit an example where correlations decay as $\dist_{T}^{-\frac{1}{2}}$ (see Equation (2.11) from~\cite{rwrw}).
\end{remark}

\begin{remark}\label{remark:extension_decoupling}
We remark that the statement of the theorem above can be extended to allow for the case when the function $f_{1}$ depends not only on the configuration inside the box $B_{1}$, but actually on the whole past of the process up to the upper time limit given by the ball $B_{1}$. 
\end{remark}

\bigskip

\textbf{Proof overview of the decoupling.} The proof of Theorem~\ref{teo:decoupling} relies on a construction of a coupling between two systems $\eta$ and $\eta^{*}$ with respective densities $\rho$ and $\rho^{*}$ such that, for a given subset of $H \in \Z^{d}$, we have $\eta_{t}(x) \leq \eta^{*}_{t}(x)$, for all $x \in H$ with large probability, provided $t$ is large enough. With this coupling in hands, Theorem~\ref{teo:decoupling} follows easily.

\par The construction of the coupling is more intricate. Its nature resembles that of Baldasso and Teixeira~\cite{bt2, bt}. We start with two independent collections of particles $\eta_{0}$ and $\eta_{0}^{*}$. Inside a large set containing $H$, we match particles of $\eta_{0}$ to particles of $\eta^{*}_{0}$. We then let the random walks evolve. Whenever a pair of matched particles meet, they evolve together. Standard heat kernel estimates provide the bounds on the probability of this happening before some given time $s$. As those bounds are not strong enough for the estimates we need, we do a rematching of the particles a polynomial number of times to boost the aforementioned estimates to yield our desired stretched exponential bounds. We remark that this coupling is more robust than that of~\cite{ gs1, psss, pt}, since it does not require such a refined control of heat-kernel estimates.

\bigskip

\textbf{Related works.} There are many different works that treat models for infection spread. Perhaps the most similar to ours is considered by Kesten and Sidoravicius~\cite{ks}. In their case, particles evolve as continuous-time \emph{unbiased} simple random walks and all particles placed initially at the origin begin infected. Once again, infection spreads through contact and there is no recovery. They consider the set $V(t)$ of sites visited by an infected particle up to time $t$ and prove that there exist positive constants $C_{1}$ and $C_{2}$ such that, with large probability, $B(C_{1}t) \subset V(t) \subset B(C_{2}t)$, where $B(k)=[-k,k]^{d}$. In~\cite{ks2}, Kesten and Sidoravicius strenghen the results of~\cite{ks} and conclude that the set $V(t)$ satisfies a shape theorem, while in~\cite{ks3} they studied the case with recovery.

\par The proof in~\cite{ks} shares some similarities with ours. The upper bound is also obtained through path-counting arguments while the lower bound revolves around the construction of a delicate renormalization structure. As we mentioned in the proof overview of our Theorems~\ref{t:small_density} and~\ref{t:large_density}, the first part of our proof follows the path-counting argument for the upper bound in~\cite{ks}, but we need to proceed one step further to control that the infection does not move too much away from the bias of the random walks. With regard to the lower bound, we focus this discussion on the one-dimensional case to highlight the main differences between our proof and that in~\cite{ks}. First,~\cite{ks} observes that the infection front (say, the rightmost infected particle) behaves as a symmetric random walk when there is only one infected particle at the front, whereas the front has a drift to the right when there is more than one particle. This implies that, in order to prove that the infection grows linearly, it suffices to prove that the infection front has at least two particles a positive fraction of time, which they obtain via a renormalization scheme. In our case, where particles have a drift to the left, this strategy fails precisely because  of two reasons. First, having two particles at the front may not be enough to overcome the drift to the left of the random walks, so one needs a sufficiently large number of particles. Second, even if two particles were enough to overcome the drift, just having a positive density of times with two particles at the front may not be enough to compensate for the drift to the left that the front undertake when it has just one particle. Our strategy is then to develop a multiscale renormalization scheme different from that of~\cite{ks}, with a target of controlling instances where the infection does not travel with a minimal positive speed to the right, and prove that events of this form have very small probability.

\par Regarding other works, Gracar and Stauffer~\cite{gs1, gs2} analyzed a more general situation where the random walks move on top of the random conductance model. They prove the existence of a percolation structure (which they call Lipschitz surface) and use this to conclude that the infection spreads with positive speed for $d \geq 2$. A less structured percolating argument was obtained by Stauffer~\cite{stauffer} in continuous space, where particles move as independent Brownian motions.

\par A simpler model that can be viewed as an infection process is the so-called frog model. Here, infected particles perform discrete-time simple random walks, while healthy particles do not move until an infected particle jumps onto their position. A thorough discussion about this model can be found in the survey paper by Popov~\cite{popov}. We just remark that, under some minor conditions on the initial location of the particles, Alves, Machado, and Popov~\cite{amp}, and, independently, Ram\'{i}rez and Sidoravicius~\cite{rs}, prove a shape theorem similar to the one in~\cite{ks2}. This was further strengthened by Alves, Machado, Popov and Ravishankar~\cite{ampr}.

\par Let us now briefly review models where particles do not move independently. Baldasso and Teixeira~\cite{bt} consider particles that move according to a one-dimensional zero-range process. Under mild conditions that garantee the existence of invariant measures for the process, they provide lower and upper bounds for the speed with which the front of the infection grows. Jara, Moreno, and Ram\'{i}rez~\cite{jmr} consider an infection evolving on top of one-dimensional exclusion process and rely on regeneration arguments to prove a law of large numbers and central limit theorem for the infection front.

\par Regarding decoupling estimates (as in our Theorem~\ref{teo:decoupling}), sprinkling ideas were first introduced in the context of random interlacements by Sznitman~\cite{s} and in the context of independent Brownian motions by Sinclair and Stauffer~\cite{ss} (see also~\cite{psss}). These types of inequalities were used to study several conservative particle systems. Peres, Sinclair, Sousi, and Stauffer~\cite{psss}, Benjamini and Stauffer~\cite{bs}, and Stauffer~\cite{stauffer} considered independent Brownian motions. Hil\'{a}rio, den Hollander, Sidoravicius, dos Santos, and Teixeira~\cite{rwrw} treated discrete-time balanced random walks and built on the strategy from Popov and Teixeira~\cite{pt} to provide decouplings for this system. The random conductance model was considered in~\cite{gs1}, while Baldasso and Teixeira developed a decoupling inequality for the one-dimensional zero-range process~\cite{bt} and the one-dimensional simple exclusion process~\cite{bt2}.

\section{Basic definitions}\label{sec:model}
~
\par Let us now precisely construct the particle systems and infection process we consider.

\par Recall that $p(\cdot)$ denotes a nearest-neighbor probability distribution on $\Z^{d}$ such that, for each $i \in [d]$,
\begin{equation}\label{eq:drift}
0 < p(e_{i}) \leq p(-e_{i}) < 1,
\end{equation}
where $\{e_{i}\}_{i =1 }^{d}$ is the canonical basis of $\Z^{d}$. The vector $\vec{v}=\sum_{x \sim 0} p(x)\vec{x}$ is the $d$-dimensional drift of the distribution $p(\cdot)$. Due to~\eqref{eq:drift}, every coordinate of $\vec{v}$ is non-positive. Furthermore, we assume that $p(e_{1}) < p(-e_{1})$, so that $v_{1}$, the first coordinate of $\vec{v}$, is negative.

\par For each $x \in \Z^{d}$ and $n \in \N$, let $S^{x,n} = \left(S^{x,n}_{t}\right)_{t \geq 0}$ denote an independent copy of a rate-one continuous-time random walk with transition probability $p(\cdot)$ and $S^{x,n}_{0}=x$, for all $n \in \N$. Denote this collection by $\mathcal{S}$.

\par Given a non-negative parameter $\rho \geq 0$, consider, for each $x \in \Z^{d}$, an independent random variable $\eta_{0}(x)$ with distribution $\poisson(\rho)$. For each positive time $t>0$, let
\begin{equation}\label{eq:biased_irw}
\eta_{t}(x) = \sum_{y \in \Z^{d}} \sum_{n=1}^{\infty} \charf{\{S^{y,n}_{t}=x\}}\charf{\{n \leq \eta_{0}(y)\}}
\end{equation}
denote the number of particles at position $x$ at time $t$. We write $\P_{\rho}$ for the distribution of the process $\eta=\left(\eta_{t}\right)_{t \geq 0}$.

We see particles in a given space-time point as ordered in a pile. This ordering can be arbitrarily chosen, and we will use it to talk about the $k$-th particle in a site. Furthermore, notice that it also makes sense to talk about the $k$-th particle that jumps into a site $x$ after time $t$ and that this does not depend on this ordering of particles in each site.

\par The product measure with marginals $\poisson(\rho)$ is invariant for the process, and we call the quantity $\rho$ the density of the system. Besides, if $\rho' \leq \rho$, it is possible to define an order preserving coupling between two processes $\eta^{\rho'}$ and $\eta^{\rho}$ with respective densities $\rho'$ and $\rho$ such that
\begin{equation}
\eta^{\rho'}_{t}(x) \leq \eta^{\rho}_{t}(x),
\end{equation}
for all $t \geq 0$ and $x \in \Z^{d}$: one simply uses the same collection of walks $\mathcal{S}$ to evolve both processes and consider the initial conditions $\eta^{\rho'}_{0}$ and $\eta^{\rho}_{0}$ that satisfy $\eta^{\rho'}_{0}(x) \leq \eta^{\rho}_{0}(x)$, for all $x \in \Z^{d}$.

\bigskip

\par We now proceed to define the infection process $(\xi_{t})_{t \geq 0}$. Recall we add an infected particle at the origin at time zero. At any given time $t \geq 0$, $\xi_{t}(x)$ denotes the number of infected particles at $x \in \Z^{d}$. At time zero, only particles at the origin are infected, which means that $\xi_{0}(x)=(\eta_{0}(0)+1)\charf{\{x=0\}}$ . As for the evolution, each time an infected particle jumps to a site with healthy particles, all particles at that given site become infected. Furthermore, if a healthy particle jumps towards a site with infected particles, it immediately becomes infected. This in particular implies that in every site and non-negative time, either all particles are healthy or all are infected.

\subsection{Genealogical infected paths}\label{subsec:GIP}
~
\par We can see the infection mechanism through genealogical paths. In order to define these paths, we introduce a notation for the trajectory of a particle. If $X$ denotes a particle present at time zero, we denote by $(X(s))_{s \geq 0}$ the path that it performs. We say that a particle $X$ becomes infected at time $t$ if it is healthy before time $t$ and infected after time $t$, i.e., $t$ is the first time it shares a site with another infected particle.

\par We say that $\gamma:[0,t] \to \Z^{d}$ is a genealogical infected path up to time $t$ (GIP($t$)) if $\gamma(0)=0$ and there exist a sequence of times $0 = t_{0} < t_{1} < \dots < t_{n} \leq t$ and a sequence of particles $X_{1}, \dots X_{n+1}$ such that $X_{1}(0)=0$, $X_{i}$ becomes infected at time $t_{i-1}$, and, for all $s \in [t_{i-1}, t_{i}]$ we have $\gamma(s)=X_{i}(s)$, for all $i \in [n]$; for convenience of notation, we assume that $t_{n+1}=t$. Of course, $\xi_{t}(x) >0$ if and only if there exists a GIP($t$) $\gamma$ with $\gamma(t)=x$. See Figure~\ref{fig:GIP} for a representation of a GIP($t$).

\par We will identify a GIP($t$) by the following: the number $n$ of particles it follows, a vector $(k_{1}, \dots, k_{n})$ with non-negative entries that counts the number of jumps each particle performs while it is being followed, and a vector with non-zero integer entries $(j_{0}, j_{1}, \dots j_{n-1})$ that identifies which is the next particle to be followed ($j_{0}$ identifies the first particle that is followed and starts at the origin). If $j_{i}>0$, then when $X_i$ makes its last jump we take $X_{i+1}$ to be the $j_i$-th healthy particle that was present at the site where $X_i$ jumped to (and thus became infected via $X_{i}$). Whenever $j_{i}<0$, we wait $X_i$ to perform all its $k_i$ jumps, and only at this moment wait for $|j_i|$ healthy particles to jump on the site where $X_i$ is (and thus becomes infected for the first time when it meets $X_{i}$), taking the $|j_i|$-th such particle as $X_{i+1}$.

\par This identification has some particularities we need to address. First, observe that, if $k_{i}=0$, we demand that $j_{i}<0$, i.e., after following a particle that does not jump, we need to follow a particle that is not yet in the site we are considering. Moreover, we will denote by $k = \sum_{i=1}^{n}k_{n}$ the total number of jumps of a GIP. Finally, it is not always the case that all possible choices for these vectors will yield a GIP, but all GIP can be obtained by some choice of such values.

\begin{figure}\label{fig:GIP}
\begin{center}
\begin{tikzpicture}

\draw[<->, thick] (-3.5,0)--(3.5,0);

\foreach \x in {-3, -2, -1, 0, 1, 2, 3}{
\draw[->, thick] (\x,0)--(\x,3.8);
}

\draw[blue, line width=2.5](0,0)--(0, 0.6);
\draw[blue, line width=2.5](-1,0.6)--(-1, 1);
\draw[blue, line width=2.5](-2,1)--(-2, 1.5);
\draw[red, line width=2.5](-2,1.5)--(-2, 1.9);
\draw[green!40!black, line width=2.5](-2,1.9)--(-2, 2.4);
\draw[green!40!black, line width=2.5](-1,2.4)--(-1, 2.7);
\draw[green!40!black, line width=2.5](-2,2.7)--(-2, 3.1);
\draw[purple, line width=2.5](-3,3.1)--(-3,3.7);

\draw[->, thick, dashed] (0,0.6)--(-1,0.6);
\draw[->, thick, dashed] (-1,1)--(-2,1);
\draw[->, thick, dashed] (-2,2.4)--(-1,2.4);
\draw[->, thick, dashed] (-1,2.7)--(-2,2.7);
\draw[->, thick, dashed] (-2,3.1)--(-3,3.1);
\end{tikzpicture}
\caption{An example of a genealogical infected path. Different colors stand for different particles followed. Following the representation of the path via the quantities introduced above, we have $n=4$ and $(k_{1}, k_{2}, k_{3},k_{4}) = (2, 0, 3, 0)$. Furthermore, notice that the transition from the first (blue) particle to the second (red) particle happens with an index $j_{1}<0$. Since the second particle does not jump, we have $j_{2}<0$ as well. In the last transition, from a green to a purple particle, the index $j_{3}$ is positive.}
\end{center}
\end{figure}

\section{The small density case: proof of Theorem~\ref{t:small_density}}\label{sec:small_density}
~
\par We now consider the case when the density is very small. Using a first moment computation, we will prove that there exists a small density $\rho>0$ such that, with large probability, the infection travels towards the negative direction in the first coordinate axis.

\par We will control how infection spreads by using genealogical paths. The first proposition we prove states that it is unlikely to exist a GIP that jumps many times. This is a general statement that does not depend on the probability distribution $p(\cdot)$. This transition kernel will be important when we consider finer properties of the model.

\newconstant{c:many_jumps}

\begin{prop}\label{prop:many_jumps}
For any $\rho \in (0,1)$, there exists a positive constant $\useconstant{c:many_jumps}=\useconstant{c:many_jumps}(\rho)$, which might be taken to be monotone non-decreasing as a function of $\rho$, such that, for all $t \geq 0$,
\begin{equation}
\P_{\rho}\left[\begin{array}{c}\text{there exists a \emph{GIP($t$)} that} \\ \text{jumps more than } \useconstant{c:many_jumps}t \text{ times}
\end{array}\right] \leq e^{-\useconstant{c:many_jumps}t+1}.
\end{equation}
\end{prop}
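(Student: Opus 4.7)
My plan is a first-moment bound on the number of genealogical infected paths with at least $\useconstant{c:many_jumps}T$ jumps, followed by Markov's inequality. Using the parametrization from Subsection~\ref{subsec:GIP}, each GIP corresponds to a combinatorial template $(n,(k_1,\ldots,k_n),(j_1,\ldots,j_n))$, where $k=\sum_i k_i$ is the total number of jumps, together with an ordered sequence of $k$ jump directions.

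For a fixed template, the probability of realization factorizes over the $n$ followed particles, since each carries an independent rate-$1$ continuous-time random walk. For each $X_i$, the contribution consists of: (a) an existence factor of order $\rho$ (from the initial Poisson distribution if $i=1$, or from the number of available healthy particles at the prescribed site when $j_i>0$; when $j_i<0$, one instead uses an arrival-rate estimate bounded by $O(\rho)$, coming from the invariance of the product-Poisson measure); (b) a direction factor $\prod_{l}p(\text{direction}_l)$, which collapses to $1$ per particle upon summing over all $k$-tuples of directions; and (c) a timing factor bounded by $e^{-T}T^{k_i}/k_i!$ up to a uniformly bounded constant, coming from the Poisson tail for the number of jumps of a rate-$1$ walk in $[0,T]$. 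Summing $\P[\eta_t(x)\geq|j|]$ over $j\neq 0$ controls the contribution of the switching indices by a factor $C_\rho^n$, where $C_\rho$ depends only on $\rho$ and $d$. Combined with the identity
\begin{equation*}
\sum_{k_1+\cdots+k_n=k}\prod_{i=1}^{n}\frac{T^{k_i}}{k_i!}=\frac{(nT)^k}{k!},
\end{equation*}
the expected number of realized templates with $k$ total jumps and $n$ particles is bounded by $(\rho C_\rho e^{-T})^n (nT)^k/k!$.

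Finally, one sums over $n\leq k+1$ and over $k\geq\useconstant{c:many_jumps}T$, and invokes Stirling's bound $k!\geq(k/e)^k$ to obtain $(nT)^k/k!\leq(enT/k)^k$. The double sum is then handled via a small-$n$/large-$n$ split: for $n\leq\useconstant{c:many_jumps}/(2e)$ one uses $(enT/k)^k\leq(1/2)^k$, while for $n>\useconstant{c:many_jumps}/(2e)$ one uses the decay of $(\rho C_\rho e^{-T})^n$ once $T$ is large. For $T$ above a threshold depending on $\rho$ this yields an exponential bound of the form $e^{-c'\useconstant{c:many_jumps}T}$; for small $T$ the claimed inequality is trivially satisfied by the $+1$ in the exponent. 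Choosing $\useconstant{c:many_jumps}$ large enough relative to $\rho$ (and non-decreasing in $\rho$) gives the claim. The main technical obstacle is the simultaneous control of the exponential decay factor $e^{-nT}$ (which tames large $n$) and the factorial savings from $k!$ (which tame large $k$); one must also be careful about the sum over the switching indices $j_i$, whose a priori unbounded range is rendered summable precisely by the Poisson decay of the particle-number distribution. Monotonicity of $\useconstant{c:many_jumps}$ in $\rho$ follows from the standard monotone coupling between particle systems of different densities, since increasing $\rho$ can only enlarge the collection of available GIPs.
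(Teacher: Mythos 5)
Your overall strategy (first-moment bound over the GIP parametrization, followed by Markov's inequality) is the same as the paper's, but the per-particle timing factor you use is not a valid upper bound, and this is a genuine gap.

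Concretely, the event that the template $\big(n,(k_i),(j_i)\big)$ is realized requires $\sum_{i=1}^n \tau_i \le T$ (plus waiting-time contributions when $j_i<0$), where $\tau_i\sim\mathrm{Gamma}(k_i,1)$ is the time for particle $i$ to perform its $k_i$ jumps; the particles \emph{share} the budget $T$, they do not each get a full window of length $T$. Your step (c) replaces the correct joint probability $\P\big[\sum_i\tau_i\le T\big]$ by the product $\prod_i e^{-T}T^{k_i}/k_i!$, which is a product of Poisson \emph{point masses}, not tails. This is not an upper bound: for instance, with $n=k$ and $k_i\equiv 1$, the product equals $e^{-kT}T^k$, which for $k=\useconstant{c:many_jumps}T$ decays like $e^{-cT^2}$, whereas the actual probability $\P[\mathrm{Poisson}(T)\ge k]$ only decays like $e^{-c\,k\log(k/T)}\sim e^{-cT\log T}$. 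Since you later sum over $n$ up to $k+1$ and the small-$n$/large-$n$ split relies on this factor, the error feeds through to the final estimate. (Even the per-particle Poisson tail $\P[\mathrm{Poisson}(T)\ge k_i]\le Ce^{-T}T^{k_i}/k_i!$ holds only when $k_i\gtrsim 2T$, whereas in a long GIP most $k_i$ are small, often $0$ or $1$.)

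The paper's route sidesteps this by using Gamma additivity: $\sum_i\tau_i\sim\mathrm{Gamma}(k,1)$, and the waiting times for negative $j_i$ are appended so that the total is stochastically dominated by a $\mathrm{Gamma}(k+J,1/(1+\rho))$ variable; then the timing probability is exactly $\P[\mathrm{Poisson}((1+\rho)T)\ge k+J]$, a single Poisson tail that depends only on $k+J$ and is controlled via Proposition~\ref{prop:concentration}. Your convolution identity
\begin{equation*}
\sum_{k_1+\cdots+k_n=k}\prod_{i=1}^n\frac{T^{k_i}}{k_i!}=\frac{(nT)^k}{k!}
\end{equation*}
is correct but immaterial once the per-factor bound is replaced by the joint one. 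To repair your argument, replace step (c) with the joint Gamma--Poisson bound, fold the $|j_i|$ waiting times into the same Gamma sum as the paper does, and redo the $n$-summation from that point; the rest of your combinatorics (bounding the number of compositions, summing out the $j_i$'s via the Poisson/geometric indicators, Stirling) is then in line with the paper's proof.
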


\newconstant{c:finite_speed}

\par As a byproduct of the proposition above, we immediately obtain the following result.
\begin{prop}\label{prop:finite_speed}
For any densities $\rho \in (0,1)$, there exists $\useconstant{c:finite_speed}>0$ and $\alpha>0$ such that, for all $t \geq 0$,
\begin{equation}
\P_{\rho}\left[||x||> \alpha t, \text{ for some } x \in \Z^{d} \text{ such that } \xi_{t}(x)>0 \right] \leq \useconstant{c:finite_speed}e^{-\frac{t}{\useconstant{c:finite_speed}}}.
\end{equation}
\end{prop}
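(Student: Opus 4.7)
The plan is to deduce Proposition~\ref{prop:finite_speed} almost immediately from Proposition~\ref{prop:many_jumps}, after observing a simple geometric fact about genealogical paths: the displacement along a GIP is controlled by the number of jumps it performs.

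First, I would set up the deterministic inclusion. If $\xi_{t}(x) > 0$, then by the characterization in Section~\ref{subsec:GIP} there exists a GIP($t$) $\gamma$ with $\gamma(0) = 0$ and $\gamma(t) = x$. Each contribution to $\gamma(t) - \gamma(0)$ comes from a jump of one of the followed particles $X_{1}, \dots, X_{n+1}$, because the ``switches'' from $X_{i}$ to $X_{i+1}$ occur while both particles share a site and therefore do not displace $\gamma$. Since each individual jump is a nearest-neighbor step, if the total number of jumps of $\gamma$ is $k$, then $\|\gamma(t)\|_{\infty} \leq k$, and consequently $\|\gamma(t)\| \leq \sqrt{d}\, k$.

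Next I apply Proposition~\ref{prop:many_jumps} with $T = t$: with probability at least $1 - e^{-\useconstant{c:many_jumps} t + 1}$, every GIP($t$) jumps at most $\useconstant{c:many_jumps} t$ times. On this event, combining with the displacement bound above, every $x$ with $\xi_{t}(x) > 0$ satisfies $\|x\| \leq \sqrt{d}\,\useconstant{c:many_jumps}\, t$. Choosing $\alpha = \sqrt{d}\,\useconstant{c:many_jumps} + 1$ (say) and setting $\useconstant{c:finite_speed}$ in terms of $\useconstant{c:many_jumps}$ yields the desired bound.

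There is no real obstacle here; the only thing one needs to be slightly careful about is the handling of ``switch'' steps $j_i$, confirming that they cost nothing in terms of spatial displacement. That is visible directly from the identification described in Section~\ref{subsec:GIP}: both the positive-$j_i$ case (switching to a healthy particle already present at the current site) and the negative-$j_i$ case (waiting for a new particle to arrive) leave $\gamma$ at the same spatial location, so only genuine particle jumps contribute to $\gamma(t)$. With this observation in place, the entire argument reduces to substituting the bound on the number of jumps into the deterministic displacement inequality and reading off the stated exponential tail.
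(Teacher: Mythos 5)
Your proposal is correct and fills in exactly the argument the paper leaves implicit when it says the result is ``a byproduct of the proposition above'': since a GIP changes position only through nearest-neighbor jumps of the followed particles (the switches $j_i$, positive or negative, occur in place), a GIP($t$) with at most $\useconstant{c:many_jumps}t$ jumps ends within distance $\useconstant{c:many_jumps}t$ of the origin, so Proposition~\ref{prop:many_jumps} with $T=t$ gives the bound after absorbing constants. This is the same approach the paper intends; the only cosmetic remark is that a nearest-neighbor jump count $k$ already bounds $\|\gamma(t)\|_{1}$, so the extra $\sqrt{d}$ is harmless but unnecessary.
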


\begin{proof}[Proof of Proposition~\ref{prop:many_jumps}]
The proof of this statement relies on a first moment calculation. We will bound the expectation of the number of GIPs that jump more than $\useconstant{c:many_jumps}t$ times before time $t$.

The discussion gets simplified when we use the identification of GIP introduced in Subsection~\ref{subsec:GIP}. A GIP is identified by the number of jumps $k$, the number $n$ of particles it follows, a vector $(k_{1}, \dots, k_{n})$ with non-negative entries that counts the number of jumps each particle performs and a vector with non-zero integer entries $(j_{0}, j_{1}, \dots j_{n-1})$ that identifies which is the next particle to be followed. As in Subsection~\ref{subsec:GIP}, we will denote by $X_{1}, \dots X_{n}$ the collection of particles followed by the GIP.

If $k_{i}>0$, then the time it takes to follow particle $X_{i}$ until its last jump is equal in distribution to the sum of $k_{i}$ independent $\expo(1)$ random variables. We will call these exponential times $\mathcal{T}_{i}$. Besides, if $j_{i}<0$, we gain a time contribution that comes from the fact that, after particle $X_i$ does its $k_i$-th jump, it has to wait until $j_{i}$ healthy particles jump into its site from one of its neighboring sites. We will call such times as $\mathcal{W}_{i,\ell}$, where $\ell$ ranges from $1$ to $|j_{i}|$. Note that the number of healthy particles in a given such neighboring site is distributed according to a Poisson random variable of intensity $\rho$ times the probability that a particle moving as a biased random walk did not touch other infected particles in the past; this follows from the thinning of Poisson point processes. We simply bound this probability by one. Moreover, we still need to account for the possibility that $X_{i}$ jumps, which happens with rate one. Hence, $(\mathcal{W}_{i,\ell})_{\ell}$ is stochastically dominated by a sequence of $|j_{i}|$ independent $\exp(1+\rho)$ random variables. Notice furthermore that the probability that $X_{i}$ jumps before a healthy particle arrives from a neighboring site is at least $\frac{1}{1+\rho}$. In the case when the particle we are following jumps before all the $|j_{i}|$ new particles arrived, we disregard the path. From the above consideration, the probability that the path is not disregarded as described above is at most the probability that $G_{i} > |j_{i}|$, where $ G_{i} \sim \geo\left(\frac{1}{\rho+1}\right)$. From the strong Markov property, it follows that the random variables $\mathcal{T}_{i}$, $\mathcal{W}_{i,\ell}$, and $G_{i}$ are independent.

\par Let $\mathcal{G}_{t}$ denote the number of GIPs that jump more than $\useconstant{c:many_jumps}t$ times before time $t$, where $\useconstant{c:many_jumps}$ is a constant that will be chosen later. The discussion above allows us to bound
\begin{equation}\label{eq:expectation_G}
\begin{split}
\E_{\rho}[\mathcal{G}_{t}] & \leq \E_{\rho}\left[\sum_{k=\lceil \useconstant{c:many_jumps}t \rceil}^{\infty}\sum_{n=1}^{\infty}\sum_{\substack{(k_{1}, \dots k_{n}) \colon \\ \sum_{i=1}^{n}k_{i}=k}} \sum_{(j_{0}, \dots j_{n-1})} \charf{\left\{\sum_{i=1}^{k}\mathcal{T}_{i}+\sum_{i:j_{i}<0}\sum_{\ell=1}^{|j_{i}|}\mathcal{W}_{i, \ell}<t\right\}} \right. \\
& \qquad \qquad \left. \prod_{i=1}^{n}\left(\charf{\left\{\mathcal{N}_{i}>j_{i}\right\}}\charf{\{j_{i} \geq 1\}}+\charf{\left\{G_{i}>|j_{i}|\right\}}\charf{\{j_{i} \leq -1\}}\right) \charf{\{n \leq k+J\}} \right],
\end{split}
\end{equation}
where $\mathcal{N}_{i}$ counts the number of particles present at the site onto which $X_i$ makes its last jump.

Let us now estimate the expectation above. Notice first that the possible choices for the partitions $(k_{1}, \dots k_{n})$ such that $\sum_{i=1}^{n}k_{i} = k$ is bounded by
\begin{equation}\label{eq:bound_vectors_with_given_sum}
\binom{n+k-1}{n-1} \leq 2^{n+k},
\end{equation}
Write $J = \sum_{i: j_{i}<0}|j_{i}|$ and notice that the random variables $\sum_{i=1}^{k}\mathcal{T}_{i}+\sum_{i:j_{i}<0}\sum_{\ell=1}^{|j_{i}|}\mathcal{W}_{i, \ell}$ stochastically dominate a sum of $k+J$ i.i.d.\ $\expo(1+\rho)$ random variables. This allows us to bound
\begin{equation}
\P_{\rho}\left[\sum_{i=1}^{k}\mathcal{T}_{i}+\sum_{i:j_{i}<0}\sum_{\ell=1}^{|j_{i}|}\mathcal{W}_{i, \ell} \leq t\right] \leq \P[X \geq k+J],
\end{equation}
where $X \sim \poisson\big((1+\rho)t\big)$. Notice also that the number of walks we follow $n$ is upper bounded by $k+J$, since, whenever $i \in [n]$ is such that $k_{i}=0$, we have $j_{i} <0$.

To bound the expectation, we first divide the sum according to which subset of indices $A \subset [n-1]$ is such that $j_{i} <0$ for $i \in A$. Notice that, for a fixed choice of set $A$, using that $\mathcal{N}_{i}$ has Poisson distribution with parameter $\rho$,
\begin{equation}\label{eq:bound_theorem}
\sum_{\ell \notin A} \sum_{j_{\ell} \geq 1} \E_{\rho} \left[ \prod_{i \notin A}\charf{\left\{\mathcal{N}_{i}>j_{\ell}\right\}} \prod_{\tilde{\ell} \in A} \charf{\left\{G_{\tilde{\ell}}>|j_{\tilde{\ell}}| \right\}} \right] \leq \left(\frac{\rho}{1+\rho}\right)^{J} \rho^{n-1-|A|}(\rho+1),
\end{equation}
where the term $\rho+1$ comes from the number of particles at the origin at time zero.

From the discussion above we obtain the bound
\begin{equation}\label{eq:GIP_bound}
\E_{\rho}[\mathcal{G}_{t}] \leq \sum_{k=\lceil \useconstant{c:many_jumps}t \rceil}^{\infty}\sum_{n=1}^{\infty}\sum_{A \subset [n-1]} \sum_{j_{i}: i \in A} 2^{n+k} \P[X \geq k+J] \left(\frac{\rho}{1+\rho}\right)^{J} \rho^{n-1-|A|}(\rho+1)\charf{\{n \leq k+J\}}.
\end{equation}
We now observe that that the number of choices of indices $j_{i}$ with $i \in A$ such that $J =\sum_{i \in A}|j_{i}|$ is bounded by $\binom{J-1}{|A|-1}$. This allows us to bound the quantity above by
\begin{equation}
\sum_{k=\lceil \useconstant{c:many_jumps}t \rceil}^{\infty}\sum_{n=1}^{\infty}\sum_{A \subset [n-1]} \sum_{J=|A|}^{\infty}\binom{J-1}{|A|-1} 2^{n+k} \P[X \geq k+J] \left(\frac{\rho}{1+\rho}\right)^{J} \rho^{n-1-|A|}(\rho+1)\charf{\{n \leq k+J\}}.
\end{equation}

Second, we bound the number of choices for $A$ according to its size. This yields the bound
\begin{equation}
\sum_{k=\lceil \useconstant{c:many_jumps}t \rceil}^{\infty} \sum_{n=1}^{\infty}\sum_{\ell=0}^{n-1} \sum_{J=\ell}^{\infty}\binom{n}{\ell}\binom{J}{\ell} 2^{n+k} \P[X \geq k+J] \left(\frac{\rho}{1+\rho}\right)^{J} \rho^{n-1-\ell}(\rho+1)\charf{\{n \leq k+J\}}.
\end{equation}

We now change the order of summations, and conclude that the r.h.s.\ of~\eqref{eq:GIP_bound} is bounded by
\begin{equation}
\begin{split}
\sum_{k=\lceil \useconstant{c:many_jumps}t \rceil}^{\infty} & \sum_{J=1}^{\infty}\sum_{\ell=0}^{J} \sum_{n=\ell+1}^{k+J}\binom{n}{\ell}\binom{J}{\ell} 2^{n+k} \P[X \geq k+J] \left(\frac{\rho}{1+\rho}\right)^{J} \rho^{n-1-\ell}(\rho+1) \\
& \leq \sum_{k= \lceil \useconstant{c:many_jumps}t \rceil }^{\infty}\sum_{J=1}^{\infty} (k+J)8^{k+J}\P[X \geq k+J].
\end{split}
\end{equation}

We now use the estimates from Proposition~\ref{prop:concentration}. By choosing $\useconstant{c:many_jumps} \geq 2(\rho+1)$, we obtain
\begin{equation}
\E_{\rho}[\mathcal{G}_{t}] \leq \sum_{k= \lceil \useconstant{c:many_jumps}t \rceil }^{\infty}\sum_{J=1}^{\infty} (k+J)8^{k+J} \exp{\left\{-\useconstant{c:concentration}(k+J) \log \frac{k+J}{(1+\rho)t}\right\}}.
\end{equation}

To conclude, we set $L=\max\{k,J\} \leq k+J \leq 2L$ in the equation above to obtain that the r.h.s.\ of~\eqref{eq:GIP_bound} is bounded by
\begin{equation}
\begin{split}
\sum_{L=\lceil \useconstant{c:many_jumps}t \rceil}^{\infty} & 4L^{2}8^{2L}\exp{\left\{-\useconstant{c:concentration}L \log \frac{L}{(1+\rho)t}\right\}}\\
& \leq \sum_{L=\lceil \useconstant{c:many_jumps}t \rceil}^{\infty} e^{4L}e^{6L}\exp{\left\{-\useconstant{c:concentration}L \log \frac{L}{(1+\rho)t}\right\}} \\
& \leq \sum_{L=\lceil \useconstant{c:many_jumps}t \rceil}^{\infty} \exp{\left\{-L\left(\useconstant{c:concentration}\log \frac{\useconstant{c:many_jumps}}{(1+\rho)} - 10 \right)\right\}} \\
& \leq e^{-\useconstant{c:many_jumps}t+1},
\end{split}
\end{equation}
if $\useconstant{c:many_jumps}=\useconstant{c:many_jumps}(\rho)=12^{\frac{1}{\useconstant{c:concentration}}}(1+\rho)>0$.

Combining the bound above with~\eqref{eq:GIP_bound}, we obtain that
\begin{equation}
\E_{\rho}[\mathcal{G}_{t}] \leq e^{-\useconstant{c:many_jumps}t+1}.
\end{equation}
To conclude the proof, simply apply Markov's inequality to obtain
\begin{equation}
\P_{\rho}[\mathcal{G}_{t} \geq 1] \leq \E_{\rho}[\mathcal{G}_{t}] \leq e^{-\useconstant{c:many_jumps}t+1}.
\end{equation}
\end{proof}

\newconstant{c:mushroom}

\par We now proceed to conclude the proof of Theorem~\ref{t:small_density}. In view of Proposition~\ref{prop:many_jumps}, we may restrict ourselves to paths that do not jump many times until time $t$. Before presenting the proof, we provide some basic facts about biased random walks that will be used in the proof, since GIPs are constructed by concatenating such objects.
\begin{lemma}\label{lemma:mushroom}
Let $(X_{t})_{t \geq 0}$ be a random walk starting from the origin, and let $\vec{v}$ denote its drift. For any $\epsilon>0$, there exist a positive random variable $\mathcal{R}_{\epsilon}$ and a positive constant $\useconstant{c:mushroom}=\useconstant{c:mushroom}(p(\cdot), d, \epsilon) \in (0,1)$ such that, almost surely
\begin{equation}\label{eq:mushroom_1}
||X_{t}- t\vec{v}|| \leq \max\{\mathcal{R}_{\epsilon}, \epsilon  t\}, \quad \text{ for all } t \geq 0,
\end{equation}
and, for every $u \geq 0$,
\begin{equation}\label{eq:mushrrom_2}
\P\left[\mathcal{R}_{\epsilon} \geq u\right] \leq \useconstant{c:mushroom}^{-1}e^{-\useconstant{c:mushroom}u}.
\end{equation}
In particular, $\mathcal{R}_{\epsilon}$ is stochastically dominated by $\frac{1}{\useconstant{c:mushroom}}Y-\frac{\log \useconstant{c:mushroom}}{\useconstant{c:mushroom}}$, where $Y \sim \expo(1)$.
\end{lemma}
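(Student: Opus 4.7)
The plan is to set
\begin{equation*}
\mathcal{R}_\epsilon := \sup\bigl\{||X_t - t\vec{v}||: t \geq 0, \ ||X_t - t\vec{v}|| > \epsilon t\bigr\},
\end{equation*}
with the convention that the supremum over the empty set is zero. The strong law of large numbers for the random walk $X_t$ gives $X_t/t \to \vec{v}$ almost surely, so the set over which the supremum is taken is almost surely bounded and $\mathcal{R}_\epsilon < \infty$ almost surely. By construction, for any fixed $t$, either $||X_t - t\vec{v}|| \leq \epsilon t$ or $||X_t - t\vec{v}|| \leq \mathcal{R}_\epsilon$, which is~\eqref{eq:mushroom_1}.

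For~\eqref{eq:mushrrom_2} I would perform a union bound over unit time intervals:
\begin{equation*}
\P[\mathcal{R}_\epsilon \geq u] \leq \sum_{k=0}^\infty \P\!\left[\sup_{t \in [k,k+1]}||X_t - t\vec{v}|| \geq \max(u, \epsilon k)\right].
\end{equation*}
Writing $X_t - t\vec{v} = (X_k - k\vec{v}) + (X_t - X_k) - (t-k)\vec{v}$ for $t \in [k,k+1]$, each such supremum is at most $||X_k - k\vec{v}|| + J_k + ||\vec{v}||$, where $J_k \sim \poisson(1)$ counts the jumps of $X$ in $[k,k+1]$. The task thus reduces to bounding, for each $k$, the tails $\P[||X_k - k\vec{v}|| \geq \max(u,\epsilon k)/3]$ and $\P[J_k \geq \max(u, \epsilon k)/3]$.

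The first estimate is the key one and would come from the exponential martingale. Each coordinate of $X_t - t\vec{v}$ is a mean-zero martingale with bounded jumps at total rate at most $1$, so its moment generating function is dominated by $\exp(C\lambda^2 t)$ for $|\lambda|\leq 1$. The standard Chernoff optimization yields
\begin{equation*}
\P[||X_k - k\vec{v}|| \geq a] \leq 2d\exp\!\bigl(-\min(a^2/(4Ck),\,a/2)\bigr).
\end{equation*}
Setting $a = \max(u,\epsilon k)/3$ and splitting the sum at $k = \lceil u/\epsilon \rceil$: for $k \leq u/\epsilon$ one has $a = u/3$ and $a^2/k \geq u\epsilon/9$, producing at most $(u/\epsilon + 1)$ terms each of size $e^{-c_1(\epsilon) u}$; for $k > u/\epsilon$ the exponent is linear in $k$ with coefficient depending on $\epsilon$, so the geometric tail sums to at most $e^{-c_2(\epsilon) u}$. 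The Poisson tails for $J_k$ decay at the same rate by classical Chernoff for Poisson variables. Combining these bounds yields~\eqref{eq:mushrrom_2} with some $\useconstant{c:mushroom} = \useconstant{c:mushroom}(p(\cdot),d,\epsilon) > 0$, which may be taken $\geq 1$; the stochastic domination statement then follows by a direct tail comparison.

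No serious obstacle is expected: this is a routine concentration-of-measure argument for drifted random walks. The only care needed is in tracking the $\epsilon$-dependence of the exponent through the two regimes (Gaussian-type concentration when $k$ is small, linear tail when $k$ is large) and in absorbing the small-$k$ terms together with the constant $||\vec{v}||$ into the final prefactor.
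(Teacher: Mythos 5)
Your argument is correct and takes a genuinely different route from the paper's. The paper builds $\mathcal{R}_{\epsilon}$ in two steps: it first defines $\bar{\mathcal{R}}_{\epsilon}$ as the first integer time (at least $2/\epsilon$) after which the walk stays inside the cone $\{||X_t - t\vec{v}|| \leq \epsilon t\}$, shows this is almost surely finite by Borel--Cantelli using Lemma~\ref{lemma:rw_deviation}, and then sets $\mathcal{R}_{\epsilon} = \max\{\bar{\mathcal{R}}_{\epsilon}, \sup_{t \leq \bar{\mathcal{R}}_{\epsilon}}||X_t - t\vec{v}||\}$; your definition of $\mathcal{R}_{\epsilon}$ directly as the supremum of out-of-cone deviations is cleaner and gives~\eqref{eq:mushroom_1} immediately. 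For the tail bound, the paper invokes its appendix Lemma~\ref{lemma:rw_deviation}, which controls $\sup_{t\in[0,u]}||X_t - t\vec{v}||$ via Doob's maximal inequality applied to exponential martingales, then unions over intervals of length $T \geq 2/\epsilon$. You instead tile time with unit intervals and sidestep the maximal inequality with the cruder pointwise bound $\sup_{t\in[k,k+1]}||X_t - t\vec{v}|| \leq ||X_k - k\vec{v}|| + J_k + ||\vec{v}||$, where $J_k \sim \poisson(1)$ is the unit-interval jump count, and then apply a one-point Chernoff bound at each integer time. This is a more elementary way to control the supremum, and trades the paper's single geometric sum for your two regimes, Gaussian concentration for $k \lesssim u/\epsilon$ and linear decay for larger $k$, which you handle correctly. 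One small caveat: the parenthetical that $\useconstant{c:mushroom}$ ``may be taken $\geq 1$'' does not hold in general --- increasing $\useconstant{c:mushroom}$ strengthens~\eqref{eq:mushrrom_2}, so the constant is fixed by whatever decay rate your estimate actually produces, which for small $\epsilon$ will typically be small. This does not affect~\eqref{eq:mushroom_1} or~\eqref{eq:mushrrom_2} themselves.
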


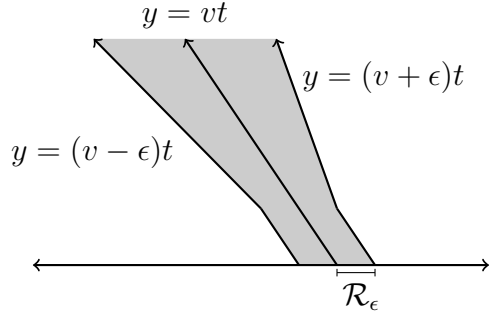
\begin{figure}
\begin{center}
\begin{tikzpicture}

\fill[black!20!white](-0.5,0)--(0.5,0)--(0,0.75)--(-0.8,3)--(-3.2,3)--(-1,0.75);
\draw[<->, thick](-4,0)--(2,0);
\draw[->, thick](0,0)--(-2,3);
\draw[thick](0.5,0)--(0,0.75);
\draw[thick](-0.5,0)--(-1,0.75);
\draw[->, thick](0,0.75)--(-0.8,3);
\draw[->, thick](-1,0.75)--(-3.2,3);

\draw (0,-0.05)--(0,-0.15);
\draw (0.5,-0.05)--(0.5,-0.15);
\draw (0,-0.1)--(0.5,-0.1);

\node[right] at (-0.6,2.5){$y=(v+\epsilon)t$};
\node[left] at (-2,1.5){$y=(v-\epsilon)t$};
\node[above] at (-2,3){$y=vt$};

\node[below] at (0.32,-0.1){$\mathcal{R}_{\epsilon}$};
\end{tikzpicture}
\caption{The random variable $\mathcal{R}_{\epsilon}$ in dimension one. Notice that the random walk is completely contained in the gray area.}
\end{center}
\end{figure}

\begin{proof}
Begin by using Lemma~\ref{lemma:rw_deviation} to bound, for $T \geq \frac{2}{\epsilon}$,
\begin{equation}\label{eq:mushroom_3}
\begin{split}
\P & \left[||X_{t}- t \vec{v}|| \geq \epsilon t, \text{ for some } t \geq T \right] \\
& \qquad \qquad \leq \sum_{k=1}^{\infty}\P\left[||X_{t}- t \vec{v}|| \geq \epsilon kT, \text{ for some } t \in [kT, (k+1)T] \right] \\
& \qquad \qquad \leq \sum_{k=1}^{\infty}\P\left[||X_{t}- t \vec{v}|| \geq \frac{\epsilon}{2} (k+1)T, \text{ for some } t \in [kT, (k+1)T] \right] \\
& \qquad \qquad \leq \sum_{k=1}^{\infty}\useconstant{c:rw_deviation}^{-1}e^{-\useconstant{c:rw_deviation}(k+1)T} \leq c^{-1}e^{-cT},
\end{split}
\end{equation}
for some suitable positive constant $c>0$.

Due to Borel-Cantelli Lemma, the random variable
\begin{equation}
\bar{\mathcal{R}}_{\epsilon}=\inf\left\{u \in \left[\frac{2}{\epsilon}, \infty\right) \cap \N: ||X_{t}- t \vec{v}|| \leq \epsilon t, \text{ for all } t \geq u \right\}
\end{equation}
is almost surely finite. Define now
\begin{equation}
\mathcal{R}_{\epsilon} = \sup\Big\{||X_{t}- t \vec{v}||: t \in [0, \bar{\mathcal{R}}_{\epsilon}]\Big\},
\end{equation}
so that~\eqref{eq:mushroom_1} clearly holds. Finally, observe that, as an immediate consequence of~\eqref{eq:mushroom_3} and Lemma~\ref{lemma:rw_deviation}, one obtains that, for all $u \geq 0$,
\begin{equation}
\begin{split}
\P\left[\mathcal{R}_{\epsilon} \geq u\right] & \leq \P\left[\bar{\mathcal{R}}_{\epsilon} \geq u\right] +\P\left[\sup\left\{||X_{t}- t \vec{v}||: t \in [0, \bar{\mathcal{R}}_{\epsilon}]\right\} \geq u, \bar{\mathcal{R}}_{\epsilon} \leq u \right] \\
& \leq c^{-1}e^{-cu}+\P\left[\sup\left\{||X_{t}- t \vec{v}||: t \in [0, u]\right\} \geq u\right] \\
& \leq \useconstant{c:mushroom}^{-1}e^{-\useconstant{c:mushroom}u},
\end{split}
\end{equation}
concluding the proof.
\end{proof}

\begin{remark}
Note that the whole argument in the proof above crucially relies on the fact that in a GIP we start following a given particle only at the very moment when it gets infected. This implies that a particle is followed at most once, and a particle that we start to follow at some time $t$ has never intersected the GIP before time $t$, reducing dependences.
\end{remark}

\par We are now in position to prove Theorem~\ref{t:small_density}. The proof is also based on the first moment method by bounding the expected number of GIPs that do not behave as expected. In view of Proposition~\ref{prop:many_jumps}, we may consider only GIPs that do not jump many times.
\begin{proof}[Proof of Thereom~\ref{t:small_density}]
We assume that $\rho < \tfrac{1}{3}$. By possibly increasing the value of $\useconstant{c:large_deviation}$, we can assume $t \geq 1$. Furthermore, by monotonicity, we may assume that $\delta < \frac{1}{2}|v_{1}|$ and denote by $\mathcal{H}_{t}$ the number of GIPs that jump at most $\useconstant{c:many_jumps}t$ times and are such that their endpoint $x$ satisfy $\langle x-t\vec{v}, e_{1} \rangle \geq \delta t$, where $\useconstant{c:many_jumps}$ is the constant from Proposition~\ref{prop:many_jumps}.

\newconstant{c:poisson}

We first apply Lemma~\ref{lemma:mushroom}. Using the fact that $\delta < |v_{1}|$ and that $v_{1}<0$, the maximum displacement of a given particle towards the positive direction in the axis $e_{1}$ can be bounded by
\begin{equation}
\begin{split}
\sup_{s} \, \langle X_{s} , e_{1} \rangle & \ = \sup_{s} \left\{ \langle X_{s}- \vec{v}s , e_{1} \rangle +v_{1}s \right\} \\
& \leq \max \left\{\sup_{s}\left\{v_{1}s+ \delta s \right\}, \sup_{s}\left\{ v_{1}s + \mathcal{R}_{\delta}\right\} \right\} \\
& \leq \mathcal{R}_{\delta},
\end{split}
\end{equation}
since $v_{1}<0$ and $v_{1}+\delta<0$. In particular, the maximum displacement towards the positive direction of a GIP can be bounded by a sum of i.i.d.\ random variables with distribution $\mathcal{R}_{\delta}$, one for each followed particle in the path. Notice that independence of the random variables comes from the fact that we only follow newly infected particles. In particular, if the path follows at most $\alpha t$ particles, its maximum displacement is bounded by a sum of $\alpha t$ i.i.d.\ random variables $(\mathcal{R}_{\delta}^{i})_{i=1}^{\alpha t}$ with distribution $\mathcal{R}_{\delta}$. This implies that the probability that a given fixed GIP\footnote{Here we mean that a family of parameters $k$, $n$, $(k_{1}, \dots, k_{n})$, and $(j_{0}, \dots j_{n-1}$ is fixed and consider the GIP associated to it, if it exists.} has displacement to the right bigger than $\delta t$ is bounded by
\begin{equation}
\P\left[\sum_{j=1}^{\alpha t} \mathcal{R}_{\delta}^{j} \geq \delta t\right] \leq  \P\left[\sum_{j=1}^{\alpha t} Y_{j} \geq t\useconstant{c:mushroom}\delta +\alpha t\log \useconstant{c:mushroom}\right] = \P\left[Z \leq \alpha t\right],
\end{equation}
where $Y_{j}$ are i.i.d.\ $\expo(1)$ random variables and $Z \sim \poisson \left( t\useconstant{c:mushroom}\delta +\alpha t\log \useconstant{c:mushroom} \right)$. Choose now $\alpha$ small enough such that $\useconstant{c:mushroom}\delta +\alpha \log \useconstant{c:mushroom} > \alpha$ and observe that there exists a positive constant $\useconstant{c:poisson}=\useconstant{c:poisson}(\alpha, \delta, \useconstant{c:mushroom})$ such that
\begin{equation}\label{eq:deviation_mushrooms}
\P\left[\sum_{j=1}^{\alpha t} \mathcal{R}_{\delta}^{j} \geq \delta t\right] \leq \P\left[Z \leq \alpha t\right] \leq e^{-\useconstant{c:poisson}t}.
\end{equation}

We are now in position to bound the expectation of $\mathcal{H}_{t}$. Reasoning similarly as in~\eqref{eq:expectation_G} (and the paragraph preceding this equation), with the same notation, we can obtain the bound
\begin{equation}
\begin{split}
\E_{\rho}[\mathcal{H}_{t}] & \leq \E_{\rho}\left[\sum_{n=1}^{\infty}\sum_{(k_{i})_{i=1}^{n}: \sum_{i=1}^{n} k_{i} \leq \useconstant{c:many_jumps}t}\sum_{(j_{0}, \dots j_{n-1})} \charf{\left\{\sum_{i=1}^{n}\mathcal{R}_{\delta}^{i} \geq \delta t\right\}} \right. \\
& \qquad \qquad \qquad \qquad \left. \prod_{i=0}^{n-1}\left(\charf{\left\{\mathcal{N}_{i}>j_{i}\right\}}\charf{\{j_{i} \geq 1\}}+\charf{\left\{G_{i}>|j_{i}|\right\}}\charf{\{j_{i} \leq -1\}}\right) \right],
\end{split}
\end{equation} 
where $n$ denotes the number of particles followed in a given path, the vector $(k_{i})_{i=1}^{n}$ counts how many jumps each of the particles performs and $(j_{i})_{i=0}^{n-1}$ controls transitions between particles.

Proceeding as in the proof of Proposition~\ref{prop:many_jumps} (in particular, Equation~\eqref{eq:bound_theorem}) we obtain
\begin{equation}
\E_{\rho}\left[ \sum_{(j_{0}, \dots j_{n-1})} \prod_{i=0}^{n-1}\left(\charf{\left\{\mathcal{N}_{i}>j_{i}\right\}}\charf{\{j_{i} \geq 1\}}+\charf{\left\{G_{i}>|j_{i}|\right\}}\charf{\{j_{i} \leq -1\}}\right) \right] \leq (3\rho)^{n-1}\frac{(1+\rho)^{2}}{1-\rho} \leq 6(3\rho)^{n-1}.
\end{equation}

Combining the above with the bound in~\eqref{eq:deviation_mushrooms} yields
\begin{equation}\label{eq:expectation_H_2}
\E_{\rho}[\mathcal{H}_{t}] \leq 6\sum_{n=1}^{\infty} \sum_{(k_{i})_{i=1}^{n}: \sum_{i=1}^{n} k_{i} \leq \useconstant{c:many_jumps}t}\left(\charf{\left\{n \geq \alpha t\right\}}+\charf{\left\{n \leq \alpha t\right\}}e^{-\useconstant{c:poisson}t}\right)(3\rho)^{n-1}.
\end{equation}

To estimate the number of vectors $(k_{i})_{i=1}^{n}$ such that $\sum_{i=1}^{n}k_{i} \leq \useconstant{c:many_jumps}t$, we bound this quantity by the number of vectors $(k_{i})_{i=1}^{n+1}$ such that $\sum_{i=1}^{n+1}k_{i} = \useconstant{c:many_jumps}t$ and apply the bound in~\eqref{eq:bound_vectors_with_given_sum}. We now combine this bound with~\eqref{eq:expectation_H_2} to obtain, for $\alpha \leq \useconstant{c:many_jumps}$,
\begin{equation}\label{eq:expectation_H_4}
\begin{split}
\E_{\rho}[\mathcal{H}_{t}] & \leq 6\sum_{n=1}^{\infty}\left(\charf{\left\{n \geq \alpha t\right\}}+\charf{\left\{n \leq \alpha t\right\}}e^{-\useconstant{c:poisson}t}\right)(3\rho)^{n-1}\binom{\lfloor\useconstant{c:many_jumps}t\rfloor+n}{n} \\
& \leq 12 \cdot 2^{\useconstant{c:many_jumps}t}\sum_{n \geq \alpha t}(6\rho)^{n-1} + \frac{2}{\rho}e^{-\useconstant{c:poisson}t}  \sum_{n=1}^{\alpha t}\left(\frac{6\rho \useconstant{c:many_jumps} t e}{n}\right)^{n} \\
& \leq 2^{\useconstant{c:many_jumps}t} \frac{(6 \rho)^{\alpha t-1}}{1-6 \rho}+\frac{2}{\rho}\alpha t e^{-\useconstant{c:poisson}t} e^{6 \rho \useconstant{c:many_jumps}t}, 
\end{split}
\end{equation}
where the bound in the last summation is obtained by maximizing the expression $\left(\frac{6\rho \useconstant{c:many_jumps}t e}{n}\right)^{n}$ in $n$.

From~\eqref{eq:expectation_H_4}, one easily obtains that, provided $\rho$ is small enough, there exists a positive constant $\useconstant{c:large_deviation}$ such that
\begin{equation}
\E_{\rho}[\mathcal{H}_{t}] \leq \useconstant{c:large_deviation}^{-1}e^{-\useconstant{c:large_deviation}t},
\end{equation}
for all $t \geq 1$. Markov's inequality concludes the proof.
\end{proof}

\section{Decoupling}
~
\par This section contains the proof of Theorem~\ref{teo:decoupling}, the main step towards the proof of Theorem~\ref{t:large_density}. We first prove this theorem with the aid of an auxiliary proposition, whose proof can be found in Subsection~\ref{subsec:coupling}.

\par In the following, we say that a process $\eta=(\eta_{t})_{t \geq 0}$ has density $\rho>0$ if the initial distribution of the process is a product of i.i.d. $\poisson(\rho)$ random variables.
\newconstant{c:coupling}
\newconstant{c:coupling_2}
\par The proof of the decoupling relies on the construction of a coupling between two processes with different densities such that the process with higher density dominates the less dense one inside a box with large probability. 
\begin{prop}\label{prop:bigcoupling}
There exists a positive constant $\useconstant{c:coupling} = \useconstant{c:coupling}(d)>0$ such that the following holds. For any time $T \geq \useconstant{c:coupling}$, density $\rho \geq 1$, and box $H=[1,n]^{d} \subset \Z^{d}$, there exists a coupling $\P$ between two processes $\eta=(\eta_{s})_{s \geq 0}$ and $\eta^{*}=(\eta^{*}_{s})_{s \geq 0} $ such that
\begin{enumerate}
\item $\eta$ has density $\rho$ and $\eta^{*}$ has density $\rho^{*}=\rho(1+T^{-\frac{d}{8(d+2)}})$;
\item $\eta$ is independent of $\eta^{*}_{0}$;
\item
\begin{equation}\label{eq:coupling_fail}
\P\left[ \eta_{T}(x) > \eta^{*}_{T}(x), \text{ for some } x \in H \right] \leq \rho^{d+1} (n+T)^{d+1}e^{-\useconstant{c:coupling_2}T^{\frac{1}{12}}},
\end{equation}
where $\useconstant{c:coupling_2}=\useconstant{c:coupling_2}(d)>0$.
\end{enumerate}
\end{prop}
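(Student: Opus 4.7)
Our plan is to carry out the construction inside a large enlargement $\tilde{H} \supset H$ of side length $O(n + T)$. By standard deviation estimates for biased random walks (e.g.\ Lemma~\ref{lemma:rw_deviation}), the probability that a particle of $\eta$ starting outside $\tilde{H}$ reaches $H$ within time $T$ is at most $\rho |\partial \tilde{H}| e^{-cT}$, which is absorbed into the right-hand side of~\eqref{eq:coupling_fail}. Inside $\tilde{H}$ we sample $\eta_0$ and $\eta_0^*$ as independent Poisson fields with their respective densities, giving property~(2) for free. The surplus density $\rho^* - \rho = \rho T^{-d/(4\sqrt{d+2})}$ will serve as a reservoir of ``fresh'' $\eta^*$-particles, to be spent in several successive rounds of rematching with any $\eta$-particle that has not yet been coupled to an $\eta^*$-partner.

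\textbf{Matching and boosting.} For each matched pair, we let the two particles evolve as independent biased random walks with the same jump distribution $p(\cdot)$; once they occupy a common site, we couple them to move identically thereafter. The difference $Z_t = X_t - Y_t$ of two such walks has zero drift and bounded increments, and standard local central limit theorem estimates bound the probability that they fail to coalesce by time $s$ from above by a polynomial in $1/s$. A single matching attempt therefore gives only polynomial control, which is too weak for the stretched-exponential decay in~\eqref{eq:coupling_fail}. To boost it we partition $[0, T]$ into $M$ epochs of length $T/M$ and, at the start of each epoch, rematch every still-uncoupled pair with a fresh particle drawn from a pre-allocated independent portion of the reservoir earmarked for that epoch. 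After $M$ essentially independent attempts the probability that a given pair remains uncoupled at time $T$ is raised to the $M$-th power. The specific values of the exponents $d/(4\sqrt{d+2})$ and $\delta = \min\{1/2, d/(2\sqrt{d+2})\}$ appearing in the statement arise from optimizing $M$ against the per-epoch success probability subject to the constraint that the per-epoch sprinkling cumulates to at most $\rho^* - \rho$; the cap $\delta \leq 1/2$ comes from the Gaussian deviation of the random walks near the boundary of $\tilde{H}$. A union bound over the at most $\rho|\tilde{H}|$ pairs inside $\tilde{H}$ produces the prefactor $\rho^{d+1}(n + T)^{d+1}$.

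\textbf{Main obstacle.} The principal difficulty lies in organizing the rematching so that condition~(2) is preserved across all epochs: the reservoir particle recruited at epoch $k$ must be drawn from a portion of $\eta_0^*$ that is sampled independently of the entire $\eta$ process, and the pairing rule at that epoch must be measurable with respect to the history observed up to time $kT/M$ in a non-anticipating fashion. Pre-allocating $\eta_0^*$ into $M+1$ disjoint independent Poisson sub-clouds (one for the initial matching and one per epoch of rematching) realizes this, at the price of diluting each attempt's density by a factor of roughly $1/M$, which is precisely what forces the specific sprinkling exponent $d/(4\sqrt{d+2})$ rather than a cleaner value. A secondary technical point is proving the local CLT / meeting-probability estimate for the difference $Z_t$ uniformly over possible starting separations $|X_0 - Y_0| \leq O(n + T)$; this reduces to standard heat-kernel estimates for a zero-drift random walk with bounded, not necessarily symmetric, increments. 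Once these ingredients are in place, inequality~\eqref{eq:coupling_fail} follows by routine bookkeeping.
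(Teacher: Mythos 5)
Your overall outline (enlarge to a box of side $O(n+T)$, match particles, let matched pairs coalesce, rematch over many epochs, conclude by a union bound) is aligned with the paper, and so is the basic reason for the $\rho^{d+1}(n+T)^{d+1}$ prefactor. However, the way you propose to organize the rematching contains a genuine gap.

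You pre-allocate $\eta_0^*$ into $M+1$ disjoint independent Poisson sub-clouds and, at each epoch, draw fresh partners for still-uncoupled $\eta$-particles only from that epoch's sub-cloud. This cannot be made to work with the stated sprinkling. After a single epoch the per-particle success probability is only polynomially small in $T$, so the density of still-uncoupled $\eta$-particles is still $\approx\rho$. But the sub-cloud earmarked for epoch $k\geq 1$ has density at most $\rho^*/(M+1)$, and if one reserves a density-$\rho$ cloud for the initial matching, the per-epoch reservoir is at most $(\rho^*-\rho)/M=\rho\,T^{-d/(4\sqrt{d+2})}/M\ll\rho$. There are then far too few fresh $\eta^*$-particles to pair the uncoupled $\eta$-particles, and the argument collapses at the second epoch. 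The paper circumvents this precisely by \emph{not} splitting $\eta^*_0$: at each rematching time it re-pairs \emph{all} particles (not just uncoupled ones), using a greedy rule (pair same-site particles first, then same-sub-box particles). Because a coupled pair occupies the same site, the rule automatically keeps it coupled, so the number of coupled pairs is non-decreasing across rematchings and no reservoir bookkeeping is needed. The surplus density $\rho^*-\rho$ is used for a completely different purpose: via Poisson concentration in each sub-box of side $L\asymp T^{1/\sqrt{d+2}}$, it guarantees (with probability $1-e^{-c(\rho^*-\rho)^2L^d}$) that every sub-box contains at least as many $\eta^*$-particles as $\eta$-particles, so a full pairing always exists. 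This, incidentally, also corrects your account of the exponent $\delta$: the $d/(2\sqrt{d+2})$ term comes from $(\rho^*-\rho)^2 L^d\asymp T^{d/(2\sqrt{d+2})}$ in that concentration bound, while the $1/2$ comes from multiplying the number of epochs $\approx T^{(d+1)/(d+2)}$ by the per-epoch meeting probability $\approx T^{-d/(2(d+2))}$ from the heat-kernel lemma; it is not a boundary-deviation effect.

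Finally, the ``obstacle'' you flag — preserving independence of $\eta$ from $\eta^*_0$ across epochs — is handled in the paper much more simply than by pre-allocation: $\eta$ evolves along one graphical construction $\mathcal S$, $\eta^*$ along a second, independent construction $\mathcal S'$ until each $\eta^*$-particle meets its current partner (after which it adopts $\mathcal S$). Since $\eta$ never references $\mathcal S'$ and the rematching rule is a deterministic function of the observed configurations, independence of $\eta$ from $\eta^*_0$ is immediate, with no dilution of density.
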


\par We defer the proof of Proposition~\ref{prop:bigcoupling} to Subsection~\ref{subsec:coupling}. For now, we use this proposition to conclude the proof of Theorem~\ref{teo:decoupling}.

\begin{proof}[Proof of Theorem~\ref{teo:decoupling}]
Via a simple change of coordinates, we may assume, without loss of generality, that
\begin{equation}
B_{1}=[a,a+n]^{d} \times [-n,0] \text{ and } B_{2}=[1,n]^{d} \times [T,T+n].
\end{equation}
In particular, $\dist_{V}=T$. In fact, using this coordinates, we can allow $f_{1}$ to depend on the half-plane $\Z^{d} \times (-\infty, 0]$ (see Remark~\ref{remark:extension_decoupling}).

Set $H=[-2n-\rho T, 3n+\rho T]^{d}$ and consider the event
\begin{equation}
A=\left\{\begin{array}{c} \text{some particle that is outside $H$ at time $T$} \\ \text{has a trajectory that intersects $B_{2}$} \end{array} \right\}.
\end{equation}

We now use the coupling from Proposition~\ref{prop:bigcoupling} for $H$ (here we need to choose $\dist_{T}=T$ large enough). Denote by $\P$ the probability measure of the coupling and by $\E$ the expectation with respect to $\P$. We obtain two processes $\eta=(\eta_{s})_{s \geq 0}$ and $\eta^{*}=(\eta_{s}^{*})_{s \geq 0}$ with $\eta$ independent of $\eta^{*}_{0}$ such that
\begin{equation}
\begin{split}
\P\left[ \eta_{T}(x) > \eta^{*}_{T}(x), \text{ for some } x \in H \right] & \leq \rho^{d+1}(2\rho T+5n+T)^{d+1}\exp\left\{-\useconstant{c:coupling_2}T^{\frac{1}{12}}\right\} \\
& \leq \rho^{d+1}(5n+3\rho T)^{d+1}\exp\left\{-\useconstant{c:coupling_2}T^{\frac{1}{12}}\right\} \\
& \leq 5^{d+1}\rho^{2d+2}(n+T)^{d+1}e^{-\useconstant{c:coupling_2}T^{\frac{1}{12}}},
\end{split}
\end{equation}
if $T$ is taken large enough. Notice that above we used the hypothesis that $\rho \geq 1$.

Observe now that, whenever $\eta_{T} \preceq_{H} \eta^{*}_{T}$ (we write $\eta \preceq_{H} \xi$ if $\eta(x) \leq \xi(x)$, for all $x \in H$) and $\eta \notin A$, we have $f_{2}(\eta^{*}) \leq f_{2}(\eta)$. This allows us to estimate
\begin{equation}\label{eq:decoupling_1}
\begin{split}
\E_{\rho^{*}}[f_{1}(\eta^{*})f_{2}(\eta^{*})] & = \E_{\rho^{*}}[f_{1}(\eta^{*})\E\big[f_{2}(\eta^{*})|(\eta^{*}_{s})_{s \leq 0}]\big] \\
& = \E_{\rho^{*}}\big[f_{1}(\eta^{*})\E[f_{2}(\eta^{*})|\eta^{*}_{0}]\big] \\
& \leq \E \Big[f_{1}(\eta^{*})\E[f_{2}(\eta^{*})\charf{\{\eta \notin A\}}\charf{\{\eta_{T} \preceq_{H} \eta^{*}_{T}\}}|\eta^{*}_{0}]\Big]+\P[\eta \in A] \\
& \quad +\P\left[ \eta_{T}(x) > \eta^{*}_{T}(x), \text{ for some } x \in H \right] \\
& \leq \E\big[f_{1}(\eta^{*})\E[f_{2}(\eta)|\eta^{*}_{0}]\big] + \P[\eta \in A] + \rho^{2d+2}(n+T)^{d+1}e^{-\useconstant{c:coupling_2}T^{\frac{1}{12}}} \\
& \leq \E_{\rho^{*}}[f_{1}]\E_{\rho}[f_{2}]+\P[\eta \in A] + \rho^{2d+2}(n+T)^{d+1}e^{-\useconstant{c:coupling_2}T^{\frac{1}{12}}}.
\end{split}
\end{equation}

It remains to estimate the probability of the event $A$. We split this probability according to the position of the particles at time $T$. For $k \geq 1$, let
\begin{equation}
H(k)=\{x \in \Z^{d}: \dist_{\infty}(x,H)=k\},
\end{equation}
and notice that there exists a positive constant $c=c(d)$ such that
\begin{equation}
|H(k)| \leq c(n+\rho T+k)^{d-1}.
\end{equation}
Besides, in order for a particle that is at $H(k)$ at time $T$ to reach $B_{2}$, it needs to perform at least $2n+k+\rho T$ steps. Consider the event
\begin{equation}
A(k)=\left\{\begin{array}{c} \text{some particle that is in $H(k)$ at time $T$} \\ \text{has a trajectory that intersects $B_{2}$} \end{array} \right\}.
\end{equation}
We will bound the probability of $A(k)$ by considering the number of particles in $H(k)$ at time $T$. We obtain, by applying Lemma~\ref{lemma:concentration} twice,
\begin{equation}\label{eq:from_far_away}
\begin{split}
\P_{\rho}[\eta \in A(k)] & \leq \P_{\rho} \left[ \sum_{x \in H(k)} \eta_{T}(x) \geq 2 c \rho (n+\rho T+k)^{d-1}+T+k \right] \\
& \qquad + \left[2 c \rho (n+\rho T+k)^{d-1}+T+k\right]\P\left[ \poisson(n) \geq 2n+k+\rho T \right] \\
& \leq e^{-T-k} + \left[2 c \rho (n+\rho T+k)^{d-1}+T+k\right]e^{-\rho T-k} \\
& \leq c(n^{d}+T^{d}+k^{d})e^{-T-k},
\end{split}
\end{equation}
by further increasing the value of $T$ if necessary.

In particular,
\begin{equation}
\P_{\rho}[\eta \in A] \leq \sum_{k=1}^{\infty}\P_{\rho}[\eta \in A(k)] \leq c(n^{d}+T^{d})e^{-T}.
\end{equation}
Combining the equation above with~\eqref{eq:decoupling_1} concludes the proof.
\end{proof}

\begin{remark}
Using the notation of the proof, notice that we can allow for $f_{1}$ to depend on the whole past $( \eta_{s}^{*} )_{s \leq 0}$. In this case, one only needs to observe that~\eqref{eq:decoupling_1} still remains valid, which follows easily from properties of the conditional expectation. This in particular establishes the extension of Theorem~\ref{teo:decoupling} as stated in Remark~\ref{remark:extension_decoupling}. 
\end{remark}

\subsection{Coupling}\label{subsec:coupling}
~
\par In this subsection we present the proof of Proposition~\ref{prop:bigcoupling}. The proof follows the same general steps from~\cite{bt} and~\cite{bt2}. For this reason, we omit some simple computations.

\par The idea for constructing the coupling is to start with two independent configurations $\eta_{0}$ and $\eta^{*}_{0}$ and evolve them simultaneously in order to obtain the domination at time $T$. We first observe that we can restrict ourselves to a larger box $H^{*}$ around $H$ and assume that all particles that end up inside $H$ at time $T$ never leave $H^{*}$. Now, to obtain the domination, we fix a deterministic sequence of times $(s_{i})_{i \geq 0}$ and, in each of these times, we construct a pairing between particles of $\eta$ and of $\eta^{*}$ that are inside $H^{*}$. The evolution is then set in a way that, if a pair of matched particles meets, they continue evolving together. In particular, the probability that there is no domination at time $T$ is bounded by the probability that there exists a particle of $\eta$ that never meets a pair. This will be easily bounded with the aid of Proposition~\ref{prop:coupling}.

\par We now proceed to prove Proposition~\ref{prop:bigcoupling}.

\begin{proof}[Proof of Proposition~\ref{prop:bigcoupling}]
Fix $T$ large enough so that Proposition~\ref{prop:coupling} applies. The coupling will use independent initial configurations $\eta_{0}$ and $\eta^{*}_{0}$ with respective densities $\rho$ and $\rho^{*}$. Besides, we consider two independent copies $\mathcal{S}$ and $\mathcal{S}'$ of the graphical construction presented in Section~\ref{sec:model}.

The process $\eta=(\eta_{t})_{t \geq 0}$ will follow the walks from $\mathcal{S}$, while the process $\eta^{*}=(\eta^{*}_{s})_{x \geq 0}$ will alternate between the two constructions. This implies that $\eta$ is independent of $\eta^{*}_{0}$.

Consider the sequence of times $s_{k}=k T^{\frac{1}{d+2}}$, for $k=0, \dots \lfloor T^{\frac{d+1}{d+2}} \rfloor$, and fix the set
\begin{equation}
H^{*}= \Big[-3 \rho T, n+3 \rho T \Big]^{d}.
\end{equation}
Proceeding as in~\eqref{eq:from_far_away}, we can bound the probability of the event
\begin{equation}
A=\left\{\begin{array}{c} \text{ there exists a particle of } \eta \\ \text{that is outside } H^{*} \text{ for some time } s_{k} \\ \text{and is inside } H \text{ at time } T \end{array} \right\}
\end{equation}
by
\begin{equation}
\P\left[A\right] \leq cT^{\frac{d+1}{d+2}}(n^{d}+T^{d})e^{-T} \leq cT^{\frac{d+1}{d+2}}(n^{d}+T^{d})e^{-\rho T}.
\end{equation}

Let $L= \left\lfloor \frac{\useconstant{c:brw_space}}{2\sqrt{d}} T^{\frac{1}{2(d+2)}} \right\rfloor$, and, for $i \in \Z^{d}$, write
\begin{equation}
H(i) = iL+[0,L)^{d}.
\end{equation}
Set $I= \{i \in \Z^{d}: H(i) \cap H^{*} \neq \emptyset\}$, and notice that
\begin{equation}
|I| \leq \left(\frac{\roof{(6\rho T+n+1)}}{L}\right)^{d} \leq c\rho^{d} \left(n+ T \right)^{d}.
\end{equation}
Observe that $H^{*} \subset \bigcup_{i \in I} H(i)$. If necessary, we increase $H^{*}$ to coincide with the union $\bigcup_{i \in I} H(i)$.

We will perform the pairing inside each box $H(i)$ with $i \in I$. Concentration bounds (via exponential Markov inequality together with the inequality $\log(1+x) \leq x-\tfrac{1}{4}x^{2}$, for $x \in [0,1]$) on the number of particles yields, for each $i \in \Z^{d}$,
\begin{equation}\label{eq:coupling_poisson_1}
\P\left[ \sum_{x \in H(i)} \eta_{t}(x) \geq \frac{\rho+\rho^{*}}{2}L^{d}\right] \leq \exp\left\{-\frac{1}{8}T^{-\frac{d}{4(d+2)}}L^{d}\right\},
\end{equation}
and
\begin{equation}\label{eq:coupling_poisson_2}
\P\left[ \sum_{x \in H(i)} \eta^{*}_{t}(x) \leq \frac{\rho+\rho^{*}}{2}L^{d}\right] \leq \exp\left\{-\frac{1}{24}T^{-\frac{d}{4(d+2)}}L^{d}\right\},
\end{equation}
In particular, if
\begin{equation}
B_{t} = \left\{ \sum_{x \in H(i)} \eta^{*}_{t}(x) \leq \sum_{x \in H(i)} \eta_{t}(x), \text{ for some } i \in I \right\},
\end{equation}
then we can use~\eqref{eq:coupling_poisson_1} and~\eqref{eq:coupling_poisson_2} to obtain
\begin{equation}\label{eq:coupling_bound_B}
\P\left[ B_{t} \right] \leq 2|I|\exp\left\{-\frac{1}{4}T^{-\frac{d}{4(d+2)}}L^{d}\right\}.
\end{equation}
Here it is important to notice that the bound above does not require $\eta_{t}$ and $\eta^{*}_{t}$ to be independent, only that they have the correct marginal distributions.

We now construct the evolution of the process $\eta^{*}=(\eta^{*}_{s})_{s \geq 0}$. If we are in the event $A \cup B_{0}$, then $\eta^{*}$ evolves using the graphical construction given by the paths $\mathcal{S}'$ and, consequently, $\eta$ and $\eta^{*}$ have independent evolutions. Assume we are in $A^{c} \cap B_{0}^{c}$. We perform a pairing between particles of $\eta_{0}$ and $\eta^{*}_{0}$ inside each set $H(i)$, for $i \in I$. This pairing is deterministic and follows the following steps.
\begin{enumerate}
\item First pair as many particles as possible of $\eta_{0}$ to particles of $\eta^{*}_{0}$ that are in the same site.
\item Pair the remaining particles of $\eta_{0}$ to particles of $\eta^{*}_{0}$ that are in the same sub-box $H(i)$.
\end{enumerate}
Observe that this pairing is always possible in the event $B_{0}^{c}$.

This pairing between particles will be used to construct the evolution of the process $\eta^{*}$. Particles of this process will use the trajectories of the construction $\mathcal{S}'$ until the time they share a site with their corresponding pair. When this happens, the particle will follow the trajectory from $\mathcal{S}$ that its pair from $\eta$ uses. At the times $(s_{k})_{k=1}^{\lfloor T^{\frac{d+1}{d+2}} \rfloor}$, these pairings are remade, following the rules mentioned above (in particular from first step in the construction of the pairings, it is possible to retain pairs of particles that meet before this rearrangement). These rules imply that the number of particles that meet a pair cannot decrease when the pairings are remade.

In view of Proposition~\ref{prop:coupling}, the probability that a particle meets its couple between times $s_{k}$ and $s_{k+1}$ is at least $\useconstant{c:rw_meeting}T^{-\frac{d}{2(d+2)}}>0$. Furthermore, each particle has $\lfloor T^{\frac{d+1}{d+2}} \rfloor \geq \lfloor T^{\frac{2}{3}} \rfloor$ attempts to find a pair. We obtain the bound
\begin{equation}
\begin{split}
\P \left[ \begin{array}{c} \text{a given particle does not find any of its pairs} \\ \text{ in any of its allowed attempts, } A^{c}, \cap_{i=0}^{\lfloor T^{\frac{d+1}{d+2}} \rfloor-1} B_{s_{i}} \end{array} \right] & \leq (1-\useconstant{c:rw_meeting}T^{-\frac{d}{2(d+2)}})^{\lfloor T^{\frac{2}{3}} \rfloor} \\
& \leq e^{-\frac{\useconstant{c:rw_meeting}}{2}T^{\frac{d+8}{6(d+2)}}} \leq e^{-\frac{\useconstant{c:rw_meeting}}{2}T^{\frac{1}{6}}}.
\end{split}
\end{equation}

In particular, we can use union bounds to obtain
\begin{equation*}
\begin{split}
\P & \left[ \eta_{T}(x) > \eta^{*}_{T}(x), \text{ for some } x \in H \right] \leq \P[A]+ \sum_{i=0}^{\lfloor T^{\frac{d-1}{d-2}} \rfloor-1}\P[B_{s_{i}}] \\ 
& \qquad \qquad \qquad + \rho (6 \rho T+n+1)^{d}\P \left[ \begin{array}{c} \text{a given particle does not find any of its pairs} \\ \text{ in any of its allowed attempts, } A^{c}, \cap_{i=1}^{\lfloor T^{\frac{d+1}{d+2}} \rfloor-1} B_{s_{i}} \end{array} \right] \\
& \qquad \qquad \qquad \leq  cT^{\frac{d+1}{d+2}}(n^{d}+T^{d})e^{-T}+ 2T^{\frac{d+1}{d+2}}|I|e^{-\frac{1}{4}T^{-\frac{d}{4(d+2)}}L^{d}}+\rho(6\rho T+n+1)^{d}e^{-\frac{\useconstant{c:rw_meeting}}{2}T^{\frac{1}{6}}} \\
& \qquad \qquad \qquad \leq \rho^{d+1} (n+T)^{d+1}e^{-\useconstant{c:coupling_2}T^{\frac{1}{12}}},
\end{split}
\end{equation*}
concluding the proof.
\end{proof}

\section{The large density case}\label{sec:large_density}
~
\par We now focus on the proof of Theorem~\ref{t:large_density}. Here, we develop a renormalization structure for the infection front. We start by providing elementary bounds for the probability that the environment behaves exceptionally bad, and for events where the infection act abnormally. In Subsection~\ref{subsec:box_notation}, we establish the main notations used for the renormalization scheme presented in Subsection~\ref{subsec:renormalization}. Finally, Subsection~\ref{subsec:proof} contains the proof of Theorem~\ref{t:large_density}.

\subsection{Elementary bounds}
~
\par In this subsection, we present some rough initial estimates that will be used to bound the events where the infection process behaves exceptionally bad. These estimates are not sharp and rely mostly on union bounds and large deviations for the number of particles or jumps in a given time interval.

\newconstant{c:many_particles}

\par Our first lemma bounds the probability that a given vertex has many particles at some moment before a given time.
\begin{lemma}\label{lemma:buzy_site}
There exists a positive constant $\useconstant{c:many_particles}$ such that, for all $L \geq 1$ and density $\rho \leq L^{2}$,
\begin{equation}
\P_{\rho}\left[\begin{array}{c} \eta_{t}(0) \geq L^{d+4}, \\ \text{for some } t \in [0,L] \end{array}\right] \leq \useconstant{c:many_particles}e^{-\useconstant{c:many_particles}^{-1}L}.
\end{equation}
\end{lemma}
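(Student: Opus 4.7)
The plan is a union bound over two events: (i) some particle initially far from the origin reaches it by time $L$, and (ii) the initial configuration has exceptionally many particles near the origin. Fix $R=10L$ and let $B_R = [-R,R]^{d} \cap \Z^{d}$. Consider the events
\begin{align*}
A_{1} &= \bigl\{\text{some particle initially outside } B_{R} \text{ visits the origin at some time in } [0,L]\bigr\},\\
A_{2} &= \Bigl\{\textstyle\sum_{x \in B_{R}} \eta_{0}(x) > L^{d+4}\Bigr\}.
\end{align*}
On $A_{1}^{c}$, every particle that is at the origin at some time $t \in [0,L]$ must have started inside $B_{R}$, hence $\eta_{t}(0) \leq \sum_{x \in B_{R}}\eta_{0}(x)$ uniformly in $t \in [0,L]$; so on $A_{1}^{c}\cap A_{2}^{c}$ this supremum is at most $L^{d+4}$. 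It therefore suffices to bound $\P_{\rho}[A_{1}]$ and $\P_{\rho}[A_{2}]$ separately by $e^{-cL}$ for some $c>0$.

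For $A_{2}$: the sum $\sum_{x \in B_{R}} \eta_{0}(x)$ is $\poisson(\mu)$ with $\mu = \rho |B_{R}| \leq L^{2}(20L+1)^{d} \leq C_{d} L^{d+2}$. Since $L^{d+4} \geq e\mu$ once $L$ is large, the standard Chernoff bound for the Poisson distribution gives
\begin{equation*}
\P_{\rho}[A_{2}] \leq \left(\frac{e\mu}{L^{d+4}}\right)^{L^{d+4}} \leq \left(\frac{C_{d}}{L^{2}}\right)^{L^{d+4}} \leq e^{-c L^{d+4}},
\end{equation*}
which is far stronger than needed.

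For $A_{1}$: by linearity of expectation and the Poisson distribution of the initial configuration,
\begin{equation*}
\P_{\rho}[A_{1}] \leq \rho \sum_{x \notin B_{R}} \P_{x}\bigl[\text{the walk visits the origin by time } L\bigr].
\end{equation*}
A trajectory from $x$ to the origin requires at least $\|x\|_{\infty}$ jumps, and the number of jumps of the continuous-time random walk by time $L$ is $\poisson(L)$. Chernoff therefore yields $\P_{x}[\text{hits } 0 \text{ by time } L] \leq (eL/\|x\|_{\infty})^{\|x\|_{\infty}} \leq (e/10)^{\|x\|_{\infty}}$ whenever $\|x\|_{\infty}\geq R=10L$. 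Summing over shells $\{\|x\|_{\infty}=r\}$, each of size at most $C_{d} r^{d-1}$, and using $\rho \leq L^{2}$,
\begin{equation*}
\P_{\rho}[A_{1}] \leq L^{2} \sum_{r \geq 10L} C_{d}\, r^{d-1} (e/10)^{r} \leq C_{d} L^{d+1} (e/10)^{10L} \leq e^{-cL}
\end{equation*}
for $L$ large. The argument is quite elastic, so no step is delicate; the only thing to keep track of is the interplay between $\rho \leq L^{2}$, the radius $R = 10L$, and the tail of $\poisson(L)$, but with $R$ linear in $L$ the jump-count tail comfortably dominates all polynomial-in-$L$ factors. Finitely many small values of $L$ are absorbed by enlarging $\useconstant{c:many_particles}$.
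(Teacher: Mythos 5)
Your proof is correct and follows essentially the same decomposition as the paper: bound the probability by the event that the ball of radius $O(L)$ starts with at least $L^{d+4}$ particles plus the event that some particle starting outside that ball reaches the origin before time $L$, then apply Poisson tail bounds to each. The paper uses radius $3L$ and refers back to an earlier computation for the details, whereas you use $10L$ and spell everything out, but the argument is the same.
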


\begin{proof}
Notice that, in order for the origin to have many particles before time $L$, it is necessary that a large ball around it starts with many particles, or there exists a particle that performs many jumps before time $L$. Proceeding as in~\eqref{eq:from_far_away}, we can bound
\begin{equation}
\begin{split}
\P_{\rho} \left[\begin{array}{c} \eta_{t}(0) \geq L^{d+4}, \\ \text{for some } t \in [0,L] \end{array}\right] & \leq \P_{\rho}\left[\sum_{x \in B(0,3L)} \eta_{0}(x) \geq L^{d+4}\right] \\
& \quad + \P_{\rho} \left[\begin{array}{c} \text{ there exists some particle that} \\ \text{starts outside } B(0,3L) \\ \text{and reaches the origin before time } L \end{array}\right] \\
& \leq \useconstant{c:many_particles}e^{-\useconstant{c:many_particles}^{-1}L},
\end{split}
\end{equation}
by choosing $\useconstant{c:many_particles}$ appropriately.
\end{proof}

\newconstant{c:leave_box}

\par The next lemma bounds the probability that the infection process travels abnormally fast during a given time interval.
\begin{lemma}\label{lemma:leave_box}
There exists a positive constant $\useconstant{c:leave_box}$ such that, for all $L \geq 2$ and density $\rho \leq L^{2}$,
\begin{equation}
\P_{\rho}\left[\begin{array}{c} \text{there exists an infected particle} \\ \text{outside } [-L^{d+6}, L^{d+6}]^{d} \text{ before time } L \end{array}\right] \leq \useconstant{c:leave_box}e^{-\useconstant{c:leave_box}^{-1}L}.
\end{equation}
\end{lemma}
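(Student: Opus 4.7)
Proof plan: The plan is to reduce the event of interest to the existence of a genealogical infected path that performs many jumps, so that Proposition~\ref{prop:many_jumps} applies directly, and then to absorb the polynomial dependence in $\rho$ into the generous power $L^{d+6}$ chosen in the statement.

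First, I would argue the containment. Suppose that at some time $t \in [0, L]$ an infected particle occupies a site $x$ with $\|x\|_\infty > L^{d+6}$. By the definition of the infection process, there exists a GIP($t$) $\gamma$ with $\gamma(0) = 0$ and $\gamma(t) = x$. Since every transition of $\gamma$ is a nearest-neighbor jump of the underlying continuous-time random walks, the total jump count of $\gamma$ is at least $\|x\|_1 \geq \|x\|_\infty > L^{d+6}$. I would then extend $\gamma$ to a GIP($L$) by continuing to follow its last tracked particle during the interval $[t, L]$ (which is a valid extension, since that particle remains infected), an operation that can only increase the jump count. Thus the event in the statement is contained in the event that some GIP($L$) performs at least $L^{d+6}$ jumps.

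The second step is to apply Proposition~\ref{prop:many_jumps} with $T = L$. It delivers a constant $\useconstant{c:many_jumps}(\rho) > 0$, monotone non-decreasing in $\rho$, such that
\begin{equation*}
\P_\rho\bigl[\exists\text{ GIP}(L)\text{ with more than }\useconstant{c:many_jumps}(\rho) L\text{ jumps}\bigr] \leq e^{-\useconstant{c:many_jumps}(\rho) L + 1}.
\end{equation*}
To invoke this bound for our containment, I need $\useconstant{c:many_jumps}(\rho) L \leq L^{d+6}$, i.e., $\useconstant{c:many_jumps}(\rho) \leq L^{d+5}$. Under the hypothesis $\rho \leq L^2$, monotonicity gives $\useconstant{c:many_jumps}(\rho) \leq \useconstant{c:many_jumps}(L^2)$; the explicit formula displayed at the end of the proof of Proposition~\ref{prop:many_jumps} shows that this quantity grows only polynomially in $L$, and the choice of the exponent $d+6$ in the statement has exactly the role of providing slack to dominate it. Hence the inclusion applies for all $L$ larger than some $L_0$ depending only on $d$.

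It then remains to convert the bound into the announced form. Using $\useconstant{c:many_jumps}(\rho) \geq \useconstant{c:many_jumps}(0) > 0$ gives $e^{-\useconstant{c:many_jumps}(\rho) L + 1} \leq \useconstant{c:leave_box} e^{-L/\useconstant{c:leave_box}}$ for a suitable $\useconstant{c:leave_box}$, and by enlarging $\useconstant{c:leave_box}$ further I can absorb the finitely many small values $L \in [2, L_0]$ trivially. The main (and essentially only) technical point is verifying $\useconstant{c:many_jumps}(L^2) \leq L^{d+5}$ from the explicit constant supplied by Proposition~\ref{prop:many_jumps}; this is a direct inspection, but it is the step that forces the rather large polynomial exponent appearing in the statement.
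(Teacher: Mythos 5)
Your containment argument is correct: an infected particle outside $[-L^{d+6},L^{d+6}]^d$ before time $L$ does force a GIP($L$) with at least $L^{d+6}$ nearest-neighbor jumps, and such a path can always be extended to a full GIP($L$). The gap is in the next step, where you assert that $\useconstant{c:many_jumps}(L^2) \leq L^{d+5}$ so that Proposition~\ref{prop:many_jumps} applies. Reading off the explicit constant from the end of that proof, $\useconstant{c:many_jumps}(\rho) = \bigl(3(64e^{6}c^{2}\rho^{2})^{1/\useconstant{c:concentration}}+2\bigr)(1+\rho)$, which grows like $\rho^{2/\useconstant{c:concentration}+1}$ as $\rho\to\infty$. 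The constant $\useconstant{c:concentration}$ from Proposition~\ref{prop:concentration} is a small universal constant: the constraint $(1-\useconstant{c:concentration})\log r \geq 1-1/r$ at $r=A/\rho=2$ forces $\useconstant{c:concentration}\leq 1-\tfrac{1}{2\log 2}\approx 0.28$. Hence with $\rho=L^{2}$ one gets $\useconstant{c:many_jumps}(L^{2})\gtrsim L^{4/\useconstant{c:concentration}+2}\gtrsim L^{16}$, which is not dominated by $L^{d+5}$ unless $d$ is at least around $12$. So the exponent $d+6$ in the statement does not, as you suggest, provide the slack needed for a direct application of Proposition~\ref{prop:many_jumps} in low dimensions, and your argument breaks down precisely in the most relevant cases $d=1,2,3$.

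The paper sidesteps this by not feeding the worst-case density $\rho \leq L^{2}$ into the GIP-counting machinery. It first introduces the event $B$ that some site in the box carries more than $L^{d+4}$ particles before time $L$, which Lemma~\ref{lemma:buzy_site} plus a union bound shows is exponentially unlikely. On $B^{c}$, the occupancy at every relevant site is polynomially bounded, so the rate at which a spreading front can advance is at most $L^{d+4}$ per step, and the number of front jumps up to time $L$ is dominated by a single $\poisson(L^{d+5})$ variable; exceeding $L^{d+6}$ jumps is then exponentially small even after a union bound over the $(2d)^{L^{d+6}}$ candidate paths. The deterministic occupancy bound $L^{d+4}$ available on $B^{c}$ is what makes the exponent $d+6$ work, and is much tighter than what Proposition~\ref{prop:many_jumps} delivers when one substitutes $\rho\leq L^{2}$ into $\useconstant{c:many_jumps}(\rho)$. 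To salvage your route you would either need a much larger box exponent in the statement (which would then have to be propagated into the renormalization scales of Section~\ref{sec:large_density}, changing $L_{k+1}=L_{k}^{d+7}$), or first restrict to $B^{c}$ and redo the GIP counting with $L^{d+4}$ in place of the $\poisson(\rho)$ occupancy tail.
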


\begin{proof}
We can bound the probability of the event above by the probability that some vertex inside $[-L^{d+6}, L^{d+6}]^{d}$ has many particles at some moment before time $L$ or the infection process travels fast through a field of vertices that are typical.
Let $A$ denote the event in the statement of the lemma, and define the events
\begin{equation}
B = \left\{\begin{array}{c} \eta_{t}(x) \geq L^{d+4}, \text{ for some} \\ t \in [0,L] \text{ and } x \in [-L^{d+6}, L^{d+6}]^{d}\end{array} \right\},
\end{equation}
and
\begin{equation}
C = \left\{\begin{array}{c} \text{there exists a path  of size } L^{d+6}, 0=x_{0} \sim x_{1} \sim \cdots \sim x_{L^{d+6}} \\ \text{and a sequence of particles } X_0, X_{1}, \dots, X_{L^{d+6}-1} \text{ such that} \\ X_{i} \text{ jumps from } x_{i} \text{ to } x_{i+1} \text{ after } X_{i-1} \text{ arrives at } x_{i} \text{ and before time } L \end{array} \right\}.
\end{equation}

Notice that
\begin{equation}
\P_{\rho}[A] \leq \P_{\rho}[B]+\P_{\rho}[C \cap B^{c}].
\end{equation}
Lemma~\ref{lemma:buzy_site} gives a bound on the probability of $B$. As for the probability of $C \cap B^{c}$, we use a union bound on the possible choices for the path. Notice that, since the number of particles that are at position $x_{i}$ is bounded by $L^{d+4}$, the probability of having particles realize a given fixed path can be bounded by the probability that a Poisson process of intensity $L^{d+4}$ has more than $L^{d+6}$ ticks up to time $L$. This yields the bound
\begin{equation}
\begin{split}
\P_{\rho}[C \cap B^{c}] & \leq (2d)^{L^{d+6}}\P\left[\poisson(L^{d+5}) \geq L^{d+6} \right] \\
& \leq (2d)^{L^{d+6}}e^{-\useconstant{c:concentration}L^{d+6}\log \frac{L^{d+6}}{L^{d+5}}}.
\end{split}
\end{equation}
The proof is completed by combining the bounds above and choosing the constants appropriately.
\end{proof}

\newconstant{c:trigger}

\par Recall that we defined the front of the infection towards the direction $e_{1}$ as
\begin{equation}\label{eq:front}
r_{t}=\sup \{ \langle x, e_{1} \rangle : \xi_{t}(x)>0\}.
\end{equation}
The next lemma states that, provided the density is large enough depending on the time parameter, there is a big probability that $r_{t}$ is large.
\begin{lemma}\label{lemma:trigger}
There exists a positive constant $\useconstant{c:trigger}>0$ such that, for any positive integer $L \geq 1$
\begin{equation}
\P_{L^\frac{1}{2}}[r_{L} < 8 L] \leq \useconstant{c:trigger}Le^{-\useconstant{c:trigger}^{-1}L^{\frac{1}{2}}}.
\end{equation}
\end{lemma}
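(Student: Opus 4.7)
The plan is to exploit the large density $\rho = L^{1/2}$ to relay the infection along the positive $e_{1}$-axis through the sites $k e_{1}$ for $k = 0, 1, \dots, 8L$. Under a suitable ``good environment'' event, each single-step relay from $k e_{1}$ to $(k+1) e_{1}$ takes time of order $L^{-1/2}$, so the total time to reach $8L\, e_{1}$ is of order $L^{1/2}$, far smaller than $L$.

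Concretely, let $N := \lfloor L^{1/2}/8 \rfloor$ and introduce
\begin{equation*}
G := \big\{ \eta_{t}(k e_{1}) \geq N \text{ for all } t \in [0, L] \text{ and all } k \in \{0, 1, \dots, 8L\} \big\}.
\end{equation*}
Since the product $\poisson(L^{1/2})$ measure is invariant under the dynamics, $\eta_{t}(x) \sim \poisson(L^{1/2})$ marginally at each fixed pair $(x, t)$. Fix a grid of times of spacing $\delta := L^{-2}$. Chernoff's bound for Poisson variables, combined with a union bound over the $O(L^{3})$ grid pairs, yields $\eta_{j\delta}(k e_{1}) \geq L^{1/2}/4$ simultaneously at all grid points, with probability at least $1 - C L^{3} e^{-c L^{1/2}}$. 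Between two consecutive grid times, the number of departures from $k e_{1}$ in an interval of length $\delta$ is stochastically dominated by a Poisson variable of mean at most $C \delta L^{1/2} = C L^{-3/2}$, so an additional Chernoff estimate together with a further union bound gives $\P_{L^{1/2}}[G^{c}] \leq C L^{O(1)} e^{-c L^{1/2}}$.

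Next, set $T_{k} := \inf\{t \geq 0 : r_{t} \geq k\}$. On $G$ one has $\eta_{t}(k e_{1}) \geq 2$ at every $t$, so a short induction over the jumps touching $k e_{1}$ shows that every particle at $k e_{1}$ at every time $t \in [T_{k}, L]$ is infected: whenever an infected particle leaves, at least one other infected particle remains (since the count never drops below $N \geq 2$), and every newcomer to $k e_{1}$ is immediately infected. Consequently, on $G$, the instantaneous rate at which some infected particle jumps from $k e_{1}$ to $(k+1) e_{1}$ is at least $p(e_{1}) N$ throughout $[T_{k}, L]$. Using the time-change representation of the underlying Poisson jump process, the increment $W_{k} := T_{k+1} - T_{k}$ satisfies $W_{k} \charf{G} \leq Y_{k} / (p(e_{1}) N)$ for i.i.d.\ random variables $Y_{1}, \dots, Y_{8L} \sim \expo(1)$. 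A standard Chernoff bound for sums of exponentials then yields $\P_{L^{1/2}}[T_{8L} > L,\, G] \leq e^{-c L^{1/2}}$ for $L$ sufficiently large, since $\sum_{k} Y_{k}/(p(e_{1}) N)$ has mean of order $L^{1/2}$.

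The main technical obstacle lies in the uniform-in-time estimate defining $G$: stationarity only provides marginal information at fixed times, so the proof must combine a fine grid argument with a small-interval bound on particle departures in order to pass from the grid to every $t \in [0, L]$. Once $G$ is secured, the persistence-of-infection step and the Chernoff tail for a sum of independent exponentials are both routine, and together they yield $\P_{L^{1/2}}[r_{L} < 8L] \leq \P_{L^{1/2}}[G^{c}] + \P_{L^{1/2}}[T_{8L} > L, G] \leq \useconstant{c:trigger} L e^{-\useconstant{c:trigger}^{-1} L^{1/2}}$.
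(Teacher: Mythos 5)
Your approach is genuinely different from the paper's, but it has a real gap. The paper avoids any uniform-in-time control of the environment: it decomposes the relay into one-step events $A_{k,i}$, each confined to a deterministic time window of length $\tfrac{1}{8}$, requiring that (a) some particle jumps from the current front site to the next, and (b) some particle at the target site sits still throughout the window. Each $A_{k,i}^c$ is then bounded in isolation purely by invariance of the Poisson product measure (at the start of the window, $\eta$ at each fixed site is Poisson($L^{1/2}$), and thinning by the jump clocks keeps things Poisson), and a union bound over the $8L$ windows finishes the proof. Your route instead sets up a global good event $G$ controlling $\eta_t(ke_1)$ for \emph{all} $t\in[0,L]$ and \emph{all} $0\le k\le 8L$, then dominates the relay times by independent exponentials. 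This is heavier machinery for the same conclusion, and it introduces two issues.

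First, the definition $T_k=\inf\{t:r_t\ge k\}$ does not guarantee that the infection has reached the site $ke_1$ itself; $r_t\ge k$ only says some infected particle has first coordinate $\ge k$, which could be at $ke_1+e_2$, say. So the persistence step ``every particle at $ke_1$ for $t\in[T_k,L]$ is infected'' does not follow. You need $T_k'=\inf\{t:\xi_t(ke_1)>0\}$; with that change (and $T_0'=0$) the induction along the axis works, and since $\xi_t(8Le_1)>0$ implies $r_t\ge 8L$, the final bound transfers. Second, the estimate of $\P[G^c]$ is incomplete as sketched: to bound departures from $ke_1$ on $[j\delta,(j+1)\delta]$ you implicitly assume the count there is $\lesssim L^{1/2}$, which is an additional \emph{upper} bound you would also need to establish at the grid points (another Chernoff/union bound, or invoking something like Lemma~\ref{lemma:buzy_site}, though that lemma's $L^{d+4}$ is too weak on its own — combine it with the short interval length). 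Finally, the stochastic domination of $W_k$ by $\expo(p(e_1)N)$ uniformly on $G$ requires a coupling argument, since $G$ and the jump clocks are not independent; this is standard but should be said. None of these are fatal, but cumulatively they make your proof considerably more delicate than the paper's deterministic-window argument, which sidesteps uniform-in-time control entirely.
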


\begin{proof}
Consider the event $A_{k,i}$ defined as
\begin{equation}
A_{k,i} = \left\{\begin{array}{c} \text{between times } k+\frac{i}{8} \text{ and } k+\frac{i+1}{8}, \text{ there exists a particle at position} \\ (k+i)e_{1} \text{ that jumps to position } (k+i+1)e_{1}, \text{ and there exists particles} \\ \text{at positions } (k+i)e_{1} \text{ and } (k+i+1)e_{1} \text{ that do not jump} \end{array} \right\},
\end{equation}
and notice that, in $\cap_{k=0}^{L-1}\cap_{i=0}^{7}A_{k,i}$, we have $r_{t} \geq 8L$. This is due to the fact that, between times $k$ and $k+1$, in the intersection $\cap_{k=0}^{L-1}\cap_{i=0}^{7}A_{k,i}$ the infection spreads from $8k e_{1}$ to $8(k+1)e_{1}$ and thus, at time $L$, there exists an infected particle at position $8L$.

Furthermore, we can bound the probability
\begin{equation}
\begin{split}
\P_{L^{\frac{1}{2}}}[A_{k,i}^{c}] & \leq \P\left[\poisson((1-e^{-\frac{1}{8}})p(e_{1})L^{\frac{1}{2}}) = 0\right]+2\P\left[\poisson(e^{-\frac{1}{8}}L^{\frac{1}{2}})=0\right] \\
& = e^{-(1-e^{-\frac{1}{8}})p(e_{1})L^{\frac{1}{2}}}+2e^{-e^{-\frac{1}{8}}L^{\frac{1}{2}}}.
\end{split}
\end{equation}
Combining this with union bound yields
\begin{equation}
\P_{L^{\frac{1}{2}}}[r_{L} < 8L] \leq \sum_{k=0}^{L-1}\sum_{i=0}^{7}\P_{L^{\frac{1}{2}}}\left[A_{k,i}^{c}\right] \leq 27Le^{-\useconstant{c:trigger}L^{\frac{1}{2}}},
\end{equation}
for some positive constant $\useconstant{c:trigger}$, and concludes the proof.
\end{proof}

\subsection{The box notation}\label{subsec:box_notation}
~
\par Let us now introduce the notation in order to develop our renormalization analysis. We first fix a sequence of scales by setting
\begin{equation}
L_{0}=L> 2^{d} \quad \text{and} \quad L_{k+1} = L_{k}^{d+7}.
\end{equation}
The initial value $L_{0}$ will be chosen later to be a large enough integer.

\par For each $k \in \N_{0}$, define the space-time box
\begin{equation}
B_{k} = \left[-L_{k}^{d+6}, L_{k}^{d+6} \right]^{d} \times [0,L_{k}],
\end{equation}
and for $m \in \Z^{d} \times L_{k} \N_{0}$, let $B_{k}(m)$ denote the translated box $B_{k}(m) = m+B_{k}$.

\par Define also the sequence of velocities
\begin{equation}
\vartheta_{0}=8 \quad \text{and} \quad \vartheta_{k+1} = \vartheta_{k}-\frac{6}{\pi^{2}(k+1)^{2}}.
\end{equation}
Notice that $\lim \vartheta_{k} = 7$.

Our goal is to bound the probability that the infection process travels very slowly towards the $e_{1}$ direction. The continuous time nature of the process implies that events of this form do not have a bounded support. For this reason, we introduce events that approximate these and have support contained in the box $B_{k}$. For each $k \in \N_{0}$, define
\begin{equation}\label{eq:boundary}
R_{k} =\left\{(x,t) \in B_{k}: ||x||_{\infty}=L_{k}^{d+6} \text{ or } x_{1} \geq \vartheta_{k}L_{k} \text{ and } t=L_{k}\right\},
\end{equation}
and, for $m \in \Z \times L_{k} \N_{0}$, let $R_{k}(m)$ denote the translated set $R_{k}(m)=m+R_{k}$. Figure~\ref{fig:renorm} contains a representation of the sets $B_{k}$ and $R_{k}$ for $d=1$.

\begin{figure}[h]\label{fig:renorm}
\begin{center}
\begin{tikzpicture}

\fill[black!20!white](0,0) .. controls (0.7,0.2) and (0.7,0.3) .. (1,0.6)--(1,0.6) .. controls (1,0.6) and (2,1.5) .. (0,2)--(-2,2) .. controls (-3,1) and (-2,0.2) .. (-1,0);
\draw[<->, thick] (-4,0)--(4,0);
\draw[thick](-3,0)--(-3,2)--(3,2)--(3,0);
\draw[line width =3](-3,0)--(-3,2);
\draw[line width =3](1.5,2)--(3,2)--(3,0);
\draw[->, thick](0,0)--(2,8/3);
\node[right] at (2,2.5){$x=\vartheta_{k} t$};
\node[right] at (3,1){$R_{k}$};
\node[left] at (-3,1){$R_{k}$};

\end{tikzpicture}
\caption{A representation for the sets $B_{k}$ and $R_{k}$. We represent the event $E_{k}$ by considered the shaded area as the infection process.}
\end{center}
\end{figure}
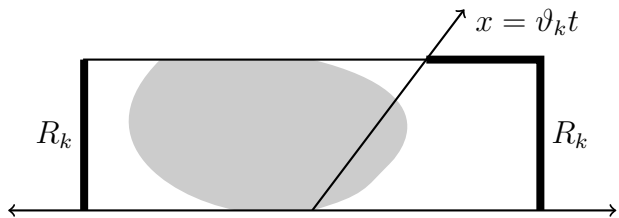

For $m = (x,t) \in \Z \times \N_{0}$, we denote by $(\xi^{m}_{s})_{s \geq t}$ for the infection process starting with initial configuration $\eta_{t}$ and initial collection of infected particles the ones in $x$.

Consider the events
\begin{equation}\label{eq:event_E}
E_{k}(m) = \left\{\xi_{t}^{m}(x)=0, \text{ for all } (x,t) \in R_{k}(m)\right\}.
\end{equation}
See Figure~\ref{fig:renorm} for a representation of the event above for $d=1$. Observe that the events $E_{k}(m)$ are non-increasing and have support inside $B_{k}(m)$. When $m=(0,0)$, we omit this in the definition and denote $E_{k}(0,0)$ simply by $E_{k}$.

\par We introduce the sequence of densities
\begin{equation}
\rho_{0}= \sqrt{L_{0}} \quad \text{ and } \quad \rho_{k+1}=\rho_{k}(1+L_{k}^{-\sfrac{d}{8(d+2)}}),
\end{equation}
and notice that, for each $k$, we have $\rho_{k+1} \leq L_{k}^{2}$, since $L_{0} \geq 2$. Furthermore the sequence $(\rho_{k})_{k \in \N_{0}}$ is monotone increasing and $\rho_{\infty} = \lim \rho_{k}$ exists and is finite.
For each $k$, define also
\begin{equation}
p_k = \P_{\rho_{k}}\left[ E_{k}(m) \right],
\end{equation}
which does not depend on $m$, by translation invariance of the process.

\par Finally, let $M_{k}$ denote the set of indices $m$ in scale $k$ in the box $B_{k+1}$, namely
\begin{equation}
M_{k} = \left(\Z^d \times L_{k} \N_{0}\right) \cap B_{k+1},
\end{equation}
and observe that
\begin{equation}
|M_{k}| \leq L_{k+1}^{6d+1}.
\end{equation}

\subsection{Estimates on $p_{k}$}\label{subsec:renormalization}
~
\par In this subsection, we provide estimates on $p_{k}$, by proving that, provided $L_{0}$ is large enough, the sequence $(p_{k})_{k \in \N_{0}}$ decays quickly.

\newconstant{c:recurssion}

\par The first lemma we prove here states a recursive inequality that relates $p_{k}$ to $p_{k+1}$.
\begin{lemma}\label{lemma:renorm_1}
There exist positive constants $\ell_{0}$ and $A$ such that, if $L_{0} \geq \ell_{0}$ is an integer, then, for all $k \in \N_{0}$,
\begin{equation}
p_{k+1} \leq L_{k+1}^{A}\left(p_{k}^{d+8}+e^{-\useconstant{c:recurssion}L_{k}^{\frac{1}{12}}} \right).
\end{equation}
\end{lemma}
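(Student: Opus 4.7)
The strategy is the classical renormalization cascade: on the bad event $E_{k+1}$, at least $d+8$ well-separated scale-$k$ bad events $E_k(m_l)$ with $m_l\in M_k$ must occur inside $B_{k+1}$. I would bound this configuration via iterated application of Theorem~\ref{teo:decoupling} combined with the monotonicity of the decreasing events $E_k$, with a polynomial overhead coming from a union bound over the positions $(m_1,\ldots,m_{d+8})$.

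\textbf{Cascade.} First, track the $e_1$-coordinate $X_j$ of the rightmost infected particle at the discrete times $jL_k$, with a witness $\bar x_j$. If $E_k(\bar x_j,jL_k)^c$ holds through the $x_1\geq\vartheta_kL_k$ branch, then by monotonicity of the infection (the local process started at $(\bar x_j,jL_k)$ is dominated by the global one) we get $X_{j+1}\geq X_j+\vartheta_kL_k$. The alternative lateral-exit branch is excluded slot-by-slot using Lemma~\ref{lemma:leave_box}, contributing a total cost of at most $L_{k+1}^{d+1}\useconstant{c:leave_box}e^{-L_k/\useconstant{c:leave_box}}$. On $E_{k+1}$ the total advance is $<\vartheta_{k+1}L_{k+1}$, while a failure-free cascade would advance by $\vartheta_kL_{k+1}$; using Lemma~\ref{lemma:leave_box} again to bound backward jumps at failing slots, the deficit $(\vartheta_k-\vartheta_{k+1})L_{k+1}=6L_{k+1}/(\pi^2(k+1)^2)$ forces $N_k\gtrsim L_k^{d+6}/(k+1)^2$ failing slots. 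Since $N_k\gg d+8$ for $L_0$ large, a greedy selection yields $d+8$ failing time-slots with pairwise separation $\Delta_k\geq L_k^{d+5}$.

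\textbf{Iterative decoupling and assembly.} Union-bounding over the at-most $|M_k|^{d+8}\leq L_{k+1}^{(d+1)(d+8)}$ tuples of scale-$k$ positions, and applying Theorem~\ref{teo:decoupling} iteratively to pull out the latest event at each step (using monotonicity of the decreasing events $E_k(m_l)$ to absorb the density sprinkling into the single-step factor $p_k$), the required sprinkling per step is $\Delta_k^{-d/(4\sqrt{d+2})}$, whose sum over $d+8$ applications stays below $L_k^{-d/(4\sqrt{d+2})}$ by the choice of $\Delta_k$---matching exactly the increment $\rho_{k+1}/\rho_k-1$. This produces
\[
\P_{\rho_{k+1}}\!\Big[\textstyle\bigcap_{l=1}^{d+8}E_k(m_l)\Big]\leq p_k^{d+8}+(d+8)\,\mathrm{Err},
\]
with $\mathrm{Err}\leq\rho_k^{2d+2}(L_{k+1}+\Delta_k)^{d+1}e^{-\useconstant{c:decoupling}\Delta_k^\delta}$. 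Combining these ingredients yields $p_{k+1}\leq L_{k+1}^{(d+1)(d+8)}\bigl(p_k^{d+8}+(d+8)\,\mathrm{Err}\bigr)+L_{k+1}^{d+1}\useconstant{c:leave_box}e^{-L_k/\useconstant{c:leave_box}}$, and since $\Delta_k\geq L_k^{d+5}$ the exponentials are dominated by $e^{-\useconstant{c:recurssion}L_k^\delta}$ and all polynomial factors by $L_{k+1}^A$ for a suitable constant $A(d)$, proving the recursion.

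\textbf{Main obstacle.} The delicate step is the cascade: one needs simultaneously that the scale-$k$ failures are \emph{numerous enough} (guaranteed by the $(\vartheta_k-\vartheta_{k+1})$ deficit) \emph{and well-separated enough} (so that iterated decoupling does not inflate the density past $\rho_{k+1}$), while controlling backward displacements of the infection front via Lemma~\ref{lemma:leave_box}. The precise tuning $\vartheta_k-\vartheta_{k+1}=6/(\pi^2(k+1)^2)$---summable in $k$ so that $\vartheta_k\to 7$ remains bounded away from zero---is what makes both constraints compatible across all scales.
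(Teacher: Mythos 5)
Your proposal has the right global architecture — a cascade over scale-$k$ blocks, a counting argument that forces many scale-$k$ failures inside $B_{k+1}$, and an iterated application of Theorem~\ref{teo:decoupling} — but two of the quantitative claims in the middle do not hold, and one of them is load-bearing.

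First, the count of failing slots. You claim the deficit $(\vartheta_k-\vartheta_{k+1})L_{k+1}$ forces $N_k\gtrsim L_k^{d+6}/(k+1)^2$ failures, but this implicitly assumes that each failing slot costs only $O(L_k)$. After excluding the lateral-exit events $D_k(m)$, the best bound you have on a single failing step is that the backward displacement is at most $L_k^{d+6}$, so the per-slot cost is of order $L_k^{d+6}$, and the deficit only forces $N_k\gtrsim L_k/(k+1)^2$ (and, for the bare recursion, only $\geq 2d+15$ is needed or provable). More seriously, even that many failing slots need not be well separated: they could all lie within a window of width $(2d+15)L_k$, so the greedy extraction of $d+8$ slots at pairwise time distance $\geq L_k^{d+5}$ is simply not available. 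The cascade argument by itself only guarantees $2d+15$ failures at \emph{distinct} scale-$k$ time coordinates, hence (after skipping every other one) $d+8$ failures at pairwise time distance $\geq L_k$, which is exactly what the paper uses.

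Second, and this is the conceptual point that would let you fix the first one: the sprinkling is not consumed additively over the $d+8$ applications. Theorem~\ref{teo:decoupling} gives $\E_{\rho^{*}}[f_1 f_2]\leq \E_{\rho^{*}}[f_1]\,\E_{\rho}[f_2]+\mathrm{err}$; the \emph{first} factor stays at the larger density $\rho^{*}$. So when you peel off $E_k(m_{\mathrm{last}})$, you leave $\P_{\rho_{k+1}}\bigl[\cap_{i<\mathrm{last}}E_k(m_i)\bigr]$ at density $\rho_{k+1}$ and get a single factor $\P_{\rho_{k}}[E_k]=p_k$; iterating, each application pays for a \emph{fresh} sprinkling of size $L_k^{-d/(4\sqrt{d+2})}$ from $\rho_{k+1}$ down to $\rho_k$, and in the end the remaining factor is bounded by $p_k$ via monotonicity of the decreasing event. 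Consequently, pairwise time separation $\geq L_k$ is enough, the constraint $\Delta_k\geq L_k^{d+5}$ is unnecessary, and the whole ``sum of sprinklings must stay below $\rho_{k+1}/\rho_k-1$'' computation is solving the wrong problem. Once you replace your separation requirement by ``distinct time coordinates, skip every other one'' and use the decoupling as stated, the recursion closes exactly as in the paper.
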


\begin{proof}
Choose $\ell_{0}$ is large enough such that, if $L_{0} \geq \ell_{0}$, then
\begin{equation}
L_{k} > \frac{8\pi^{2}}{3}(2d+14)(k+1)^{2}, \text{ for all } k \in \N_{0}.
\end{equation}

Consider, for each $m =(y,jL_{k}) \in M_{k}$,
\begin{equation}\label{eq:D_k}
D_{k}(m) = D_{k}(y,jL_{k}) = \left\{ \begin{array}{c} \xi_{t}^{m}(x)>0, \text{ for some } (x,t) \text{ with} \\ ||x-y||_{\infty}=L_{k}^{d+6}, (t-jL_{k}) \leq L_{k} \end{array} \right\},
\end{equation}
the event where the infection $\xi^{m}$ travels abnormally fast before time $L_{k}$.

We first claim that, on $E_{k+1}$, one of the following two conditions hold:
\begin{enumerate}
\item For some $m \in M_{k}$, the event $D_{k}(m)$ holds;
\item There are $2d+15$ values $(m_{i})_{i=1}^{2d+15}$ in $M_{k}$ with different time coordinates such that $E_{k}(m_{i})$ holds for all $i \leq 2d+15$.
\end{enumerate}

In order to verify the claim, suppose we are in the event $E_{k+1}$, that the first condition does not hold, and that the second one holds for at most $2d+14$ values of $m$ with different time coordinates.

For each $m =(x,t) \in M_{k}$ such that $\eta_{t}(x)>0$, let $X(m)$ denote a point in $\Z^{d}$ such that
\begin{equation}
\langle X(m), e_{1} \rangle = \max\left\{\langle x, e_{1} \rangle: \xi_{L_{k}}^{m}(x)>0 \right\}.
\end{equation}
By assuming that the first condition above does not hold, we have
\begin{equation}
-L_{k}^{d+6} \leq \langle X(m)-m, e_{1} \rangle \leq L_{k}^{d+6}.
\end{equation}
Furthermore, using that the second condition does not hold, the lower bound can be improved to
\begin{equation}
\langle X(m)-m, e_{1} \rangle \geq \vartheta_{k}L_{k}
\end{equation}
for all but at most $2d+14$ different time coordinates of $m$.

Define the sequence of sites in $\Z^{d}$ as
\begin{equation}
Y_{0}=0 \quad \text{and} \quad Y_{\ell+1}=X((Y_{\ell}, \ell L_{k})).
\end{equation}
Notice that
\begin{equation}
\xi_{L_{k+1}}\left(Y_{\frac{L_{k+1}}{L_{k}}}\right)>0.
\end{equation}

We now estimate
\begin{equation}\label{eq:concatenation}
\begin{split}
r_{L_{k+1}} & \geq \langle Y_{\frac{L_{k+1}}{L_{k}}}, e_{1} \rangle = \sum_{\ell=1}^{\frac{L_{k+1}}{L_{k}}} \langle Y_{\ell}-Y_{\ell-1}, e_{1} \rangle \\
& \geq \vartheta_{k}L_{k}\left(\frac{L_{k+1}}{L_{k}}-2d-14\right) -(2d+14)L_{k}^{d+6} \\
& \geq \vartheta_{k+1}L_{k+1}+\frac{6}{\pi^{2}(k+1)^{2}}L_{k+1}-(2d+14)(\vartheta_{k}L_{k}+L_{k}^{d+6}) \\
& \geq \vartheta_{k+1}L_{k+1}+\frac{6}{\pi^{2}(k+1)^{2}}L_{k+1}-8(2d+14)(L_{k}+L_{k}^{d+6}) \\
& > \vartheta_{k+1}L_{k+1},
\end{split}
\end{equation}
which is a contradiction with the fact that we are in the event $E_{k+1}$.

We just concluded that, on $E_{k+1}$, there are two possible outcomes: either $D_{k}(m)$ holds for some $m \in M_{k}$ or there exists a choice of indices $(m_{i})_{i=1}^{2d+15}$ in $M_{k}$ with different time coordinates such that $E_{k}(m_{i})$ holds for all $i \leq 2d+15$.

Suppose we are in the last case described above, and fix one possible choice of indices $(m_{i}=(x_{i},s_{i}))_{i=1}^{2d+15}$. Observe that, for all $i$, we have $L_{k} \leq s_{i+2}-s_{i} \leq L_{k+1}$.

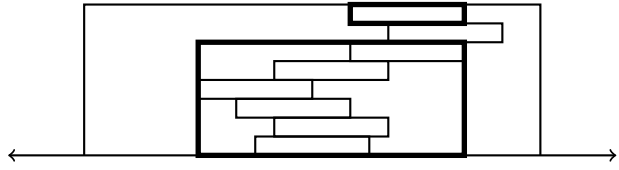
\begin{figure}\label{fig:decoupling}
\begin{center}
\begin{tikzpicture}
\draw[<->, thick] (-4,0)--(4,0);
\draw[thick] (-3,0)--(-3,2)--(3,2)--(3,0);

\draw[line width = 2](0.5,1.75) rectangle (2,2);
\draw[thick](1,1.5) rectangle (2.5,1.75);
\draw[thick](0.5,1.25) rectangle (2,1.5);
\draw[thick](-0.5,1) rectangle (1,1.25);
\draw[thick](-1.5,0.75) rectangle (0,1);
\draw[thick](-1,0.5) rectangle (0.5,0.75);
\draw[thick](-0.5,0.25) rectangle (1,0.5);
\draw[thick](-0.75,0) rectangle (0.75,0.25);
\draw[line width = 2](-1.5,0) rectangle (2,1.5);

\end{tikzpicture}
\caption{The first application of the decoupling estimate. Small boxes represent the supports of the events $E_{k}(m_{i})$ and the bold boxes are the supports of the events considered when applying the decoupling.}
\end{center}
\end{figure}

By possibly further increasing the value of $\ell_{0}$, we can apply Theorem~\ref{teo:decoupling} a few times (see Figure~\ref{fig:decoupling}) to conclude that
\begin{equation}
\begin{split}
\P_{\rho_{k+1}}\left[\bigcap_{i=1}^{2d+15}E_{k}(m_{i}) \right] & \leq \P_{\rho_{k+1}}\left[\bigcap_{i=1}^{2d+13}E_{k}(m_{i}) \right]\P_{\rho_{k}}\left[E_{k}(m_{15}) \right] \\
& \qquad +L_{k}^{4(d+1)}((2L_{k}^{d+6}+1)^{d}+L_{k+1}^{d})e^{-\useconstant{c:decoupling}^{-1}L_{k}^{\frac{1}{12}}} \\
& \leq \P_{\rho_{k}}\left[E_{k} \right]^{d+8}+ (d+8)L_{k}^{4(d+1)}((2L_{k}^{d+6}+1)^{d}+L_{k+1}^{d})e^{-\useconstant{c:decoupling}^{-1}L_{k}^{\frac{1}{12}}}.
\end{split}
\end{equation}
Union bounds, Lemma~\ref{lemma:leave_box} and a change of constants yields
\begin{equation}
\begin{split}
\P_{\rho_{k+1}}[E_{k+1}] & \leq |M_{k}|^{2d+17}\left(\P_{\rho_{k}}\left[E_{k} \right]^{d+8}+ (d+8)\useconstant{c:decoupling}((2L_{k}^{d+6}+1)^{d}+L_{k+1}^{d})e^{-\useconstant{c:decoupling}^{-1}L_{k}^{\frac{1}{12}}}\right) \\
& \qquad + |M_{k}|\P_{\rho_{k+1}}[D_{k}] \\
& \leq L_{k+1}^{A}\left(p_{k}^{d+8}+e^{-\useconstant{c:recurssion}L_{k}^{\frac{1}{12}}} \right),
\end{split}
\end{equation}
concluding the proof.
\end{proof}

\par In the next lemma, we prove that, provided $L_{0}$ is large enough, the probabilities $p_{k}$ decay fast.
\begin{lemma}\label{lemma:renorm_2}
There exist constants $\ell_{1} \geq \ell_{0}$ and $\Delta>0$ such that, if $L_{0} \geq \ell_{1}$, then
\begin{equation}\label{eq:p_k_decay}
p_{k} \leq e^{-\left(\log L_{k}\right)^{1+\Delta}}, \quad \text{for all } k \in \N_{0}.
\end{equation}
\end{lemma}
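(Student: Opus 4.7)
The proof is by induction on $k$, combining the recursion from Lemma~\ref{lemma:renorm_1} with Lemma~\ref{lemma:trigger} for the base case. The first move is to choose $\Delta>0$ small enough that $(d+7)^{1+\Delta}<d+8$, which is possible since $(d+7)^{1+\Delta}\to d+7$ as $\Delta\to 0$; set $\alpha=(d+8)-(d+7)^{1+\Delta}>0$. This strict inequality is the slack that drives the induction: since $L_{k+1}=L_{k}^{d+7}$, we have $(\log L_{k+1})^{1+\Delta}=(d+7)^{1+\Delta}(\log L_{k})^{1+\Delta}$, while the factor $p_{k}^{d+8}$ in the recursion contributes $d+8$ copies of $(\log L_{k})^{1+\Delta}$ to the exponent, leaving a surplus of $\alpha(\log L_k)^{1+\Delta}$ to absorb the polynomial prefactors. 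The threshold $\ell_{1}\ge\ell_{0}$ will then be enlarged a finite number of times to make all the inequalities below hold simultaneously.

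\textbf{Base case $k=0$.} From the definitions of $E_{0}$ and $R_{0}$, the event $E_{0}$ is contained in $\{r_{L_{0}}<\vartheta_{0}L_{0}\}=\{r_{L_{0}}<8L_{0}\}$. Since $\rho_{0}=L_{0}^{1/2}$, Lemma~\ref{lemma:trigger} yields
\[
p_{0}\;\le\;\useconstant{c:trigger}\,L_{0}\,e^{-\useconstant{c:trigger}^{-1}L_{0}^{1/2}}.
\]
Because $L_{0}^{1/2}$ dominates $(\log L_{0})^{1+\Delta}$, enlarging $\ell_{1}$ so that $\useconstant{c:trigger}^{-1}L_{0}^{1/2}-\log(\useconstant{c:trigger}L_{0})\ge(\log L_{0})^{1+\Delta}$ for all $L_{0}\ge\ell_{1}$ gives $p_{0}\le e^{-(\log L_{0})^{1+\Delta}}$.

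\textbf{Inductive step.} Suppose $p_{k}\le e^{-(\log L_{k})^{1+\Delta}}$. By Lemma~\ref{lemma:renorm_1},
\[
p_{k+1}\;\le\;L_{k+1}^{A}\bigl(p_{k}^{d+8}+e^{-\useconstant{c:recurssion}L_{k}^{\delta}}\bigr),
\]
so it suffices to bound each summand by $\tfrac{1}{2}e^{-(\log L_{k+1})^{1+\Delta}}$. Using the induction hypothesis,
\[
L_{k+1}^{A}\,p_{k}^{d+8}\;\le\;\exp\!\Bigl(A(d+7)\log L_{k}-(d+8)(\log L_{k})^{1+\Delta}\Bigr),
\]
and rewriting $(\log L_{k+1})^{1+\Delta}=(d+7)^{1+\Delta}(\log L_{k})^{1+\Delta}$ reduces the desired inequality to
\[
\alpha(\log L_{k})^{1+\Delta}\;\ge\;A(d+7)\log L_{k}+\log 2.
\]
For the second term, the same rewriting shows it suffices that
\[
\useconstant{c:recurssion}\,L_{k}^{\delta}\;\ge\;A(d+7)\log L_{k}+(d+7)^{1+\Delta}(\log L_{k})^{1+\Delta}+\log 2.
\]
Both inequalities compare a (stretched) polynomial in $L_{k}$ to a polylogarithm, and $L_{k}\ge L_{0}\ge\ell_{1}$, so a single final enlargement of $\ell_{1}$ makes both valid uniformly in $k$.

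\textbf{Where the real obstacle is.} The present lemma is a routine iteration once Lemma~\ref{lemma:renorm_1} is in hand, the substantive work having already been done there via Theorem~\ref{teo:decoupling} and the geometric renormalization. The only delicate quantitative point is the identity $d+8>d+7$: the exponent $d+8$ produced by the recursion is strictly larger than the scale-ratio exponent $d+7$, and this one-unit gap is precisely what allows a choice of $\Delta>0$ with $(d+7)^{1+\Delta}<d+8$, so that each iteration actually improves the bound instead of losing ground. Without that gap, no positive $\Delta$ would work and the stretched-log rate in the statement would have to be replaced by something weaker.
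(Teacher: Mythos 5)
Your proof is correct and takes essentially the same route as the paper: fix $\Delta$ with $(d+7)^{1+\Delta}<d+8$, get the base case from Lemma~\ref{lemma:trigger} (using $E_{0}\subset\{r_{L_{0}}<8L_{0}\}$ and $\rho_{0}=L_{0}^{1/2}$), then induct via Lemma~\ref{lemma:renorm_1}, using the identity $(\log L_{k+1})^{1+\Delta}=(d+7)^{1+\Delta}(\log L_{k})^{1+\Delta}$ and the surplus $(d+8)-(d+7)^{1+\Delta}>0$ to absorb the polynomial prefactor $L_{k+1}^{A}$ for $\ell_{1}$ large. The paper packs the two summand bounds into a single inequality required to hold for all $L\geq\ell_{1}$, whereas you split off $\alpha$ and treat each summand with a factor $\tfrac12$, but the content is identical.
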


\begin{proof}
Fix $\Delta$ such that $(d+7)^{1+\Delta} < d+8$. Choose $\ell_{1} \geq \ell_{0}$ large enough such that, for all $L \geq \ell_{0}$,
\begin{equation}
L^{A(d+7)}e^{(d+7)^{1+\Delta}\left( \log L\right)^{1+\Delta}}\left(e^{-(d+8)\left(\log L\right)^{1+\Delta}}+e^{-\useconstant{c:recurssion} L^{\frac{1}{12}}} \right) \leq 1,
\end{equation}
where $A$ and $\useconstant{c:recurssion}$ are given by Lemma~\ref{lemma:renorm_1}.
By further increasing $\ell_{1}$, Lemma~\ref{lemma:trigger} implies
\begin{equation}
p_{0} \leq e^{-\left(\log L_{0}\right)^{1+\Delta}}.
\end{equation}

Assume now that~\eqref{eq:p_k_decay} holds for some value of $k$. Let us verify that it remains true for $k+1$.

By Lemma~\ref{lemma:renorm_1}, we have
\begin{equation}
\begin{split}
e^{\left(\log L_{k+1}\right)^{1+\Delta}}p_{k+1}& \leq  L_{k+1}^{A}e^{\left(\log L_{k+1}\right)^{1+\Delta}}\left(p_{k}^{d+8}+e^{-\useconstant{c:recurssion}L_{k}^{\frac{1}{12}}} \right) \\
& \leq L_{k+1}^{A}e^{(d+7)^{1+\Delta}\left( \log L_{k}\right)^{1+\Delta}}\left(e^{-(d+8)\left(\log L_{k}\right)^{1+\Delta}}+e^{-\useconstant{c:recurssion}L_{k}^{\frac{1}{12}}} \right) \\
& \leq 1,
\end{split}
\end{equation}
concluding the proof.
\end{proof}

\subsection{Proof of Theorem~\ref{t:large_density}}\label{subsec:proof}
~
\par In this subsection, we combine the lemmas provided so far to conclude the proof of Theorem~\ref{t:large_density}.

\par Since the event in~\eqref{eq:travels_slow} is non-increasing, it suffices to verify the statement for one value of $\rho$.

\par Define $\bar{E}_{k}(m)$ analogously to $E_{k}(m)$, but with $R_{k}(m)$ replaced by $\bar{R}_{k}(m)$, an $m$-translation of
\begin{equation}
\bar{R}_{k} =\left\{(x,t) \in B_{k}: ||x||_{\infty}=L_{k}^{d+6} \text{ or } x_{1} \geq 7L_{k} \text{ and } t=L_{k}\right\},
\end{equation}
and notice that this is a non-increasing sequence of events. In particular, for $m=(x,t)$, we have
\begin{equation}
\P_{\rho_{\infty}}[\bar{E}_{k}(m)] \leq \P_{\rho_{\infty}}[E_{k}(m)] \leq p_{k} \leq e^{-\left(\log L_{k}\right)^{1+\Delta}},
\end{equation}
for all $k \geq 0$ and all $m \in M_{k}$, provided $L_{0}$ is large enough. We can now proceed with the proof of Theorem~\ref{t:large_density}.

\begin{proof}[Proof of Theorem~\ref{t:large_density}]
Choose $k_{0}$ large enough such that $\rho_{\infty} \leq L_{k_{0}}^{2}$, and set $t_{0}=L_{k_{0}}$. For $t \geq t_{0}$, let $k \geq k_{0}$ be the only value such that
\begin{equation}
L_{k} \leq t < L_{k+1},
\end{equation}
and choose $\bar{\ell} \in \left[1, \frac{L_{k+1}}{L_{k}}\right]$ such that
\begin{equation}
\bar{\ell}L_{k} \leq t < (\bar{\ell}+1)L_{k}.
\end{equation}
Define the event
\begin{equation}
A_{k} =  \bigcup_{m=(x,t) \in M_{k}} \bar{E}_{k}(m) \cap D_{k}(m),
\end{equation}
where $D_{k}(m)$ is given by~\eqref{eq:D_k}. Notice that
\begin{equation}
\P_{\rho_{\infty}}[A_{k}] \leq L_{k+1}^{6d+1}\left(\P_{\rho_{\infty}}[\bar{E}_{k}] + \P_{\rho_{\infty}}[D_{k}] \right) \leq 2L_{k+1}^{6d+1}e^{-\left(\log L_{k}\right)^{1+\Delta}}.
\end{equation}

On $A_{k}^{c}$, by applying the same concatenation argument of~\eqref{eq:concatenation}, we can easily obtain that
\begin{equation}
r_{\ell L_{k}} \geq 7\ell L_{k}, \quad \text{for all } 1 \leq \ell \leq \frac{L_{k+1}}{L_{k}}.
\end{equation}
In particular, if the above holds simultaneously with $r_{t} \leq t$, then all particles that are at a position that realizes $r_{\bar{\ell}L_{k}}$ at time $\bar{\ell}L_{k}$ must jump at least $7\bar{\ell}L_{k}-t \geq 6L_{k}$ times between times $\bar{\ell}L_{k}$ and $t$. This can be easily bound with concentration on the number of jumps of any given particle. In conclusion, we obtain
\begin{equation}
\begin{split}
\P_{\rho_{\infty}}[r_{t} <t, \eta_{0}(0)>0] & \leq \P_{\rho_{\infty}}[A_{k}]+\P\left[ \poisson(L_{k}) \geq 6K_{k} \right] \\
& \leq 2L_{k+1}^{6d+1}e^{-\left(\log L_{k}\right)^{1+\Delta}}+e^{-L_{k}} \\
& \leq 2t^{(d+7)(6d+1)}\exp\left\{-\left(\log t^{\frac{1}{d+7}}\right)^{1+\Delta}\right\}+e^{-t^{\frac{1}{d+7}}},
\end{split}
\end{equation}
which concludes the proof by possibly changing constants.
\end{proof}

\appendix

\section{Concentration}\label{app:concentration}
~
\par We collect here some basic facts about concentration of Poisson random variables and biased random walks we use in the text.

\newconstant{c:concentration}

\begin{prop}\label{prop:concentration}
There exists $\useconstant{c:concentration}>0$ such that, for any $\rho>0$ and $A \geq 2\rho$ integer, if $X \sim \poisson(\rho)$ then
\begin{equation}
\P\left[X \geq A \right] \leq \exp\left\{-\useconstant{c:concentration} A\log \frac{A}{\rho}\right\}.
\end{equation}
\end{prop}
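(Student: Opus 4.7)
The plan is to apply the standard exponential Chernoff bound. For any $\lambda > 0$, Markov's inequality applied to $e^{\lambda X}$ together with the moment generating function $\E[e^{\lambda X}] = \exp(\rho(e^\lambda-1))$ of the Poisson distribution yields
\begin{equation*}
\P[X \geq A] \leq \exp\left(-\lambda A + \rho(e^\lambda - 1)\right).
\end{equation*}
Since $A \geq 2\rho$, the optimizing choice $\lambda^{\ast} = \log(A/\rho)$ is at least $\log 2 > 0$ and hence admissible. Plugging in gives the Cram\'er-type bound
\begin{equation*}
\P[X \geq A] \leq \exp\bigl(-I(A)\bigr), \qquad I(A) := A \log\tfrac{A}{\rho} - A + \rho.
\end{equation*}

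The remaining step is to compare $I(A)$ with $A \log(A/\rho)$. Writing $u = A/\rho \geq 2$, the ratio becomes
\begin{equation*}
\frac{I(A)}{A \log(A/\rho)} = 1 - \frac{1}{\log u} + \frac{1}{u \log u} =: f(u).
\end{equation*}
I would then show by a one-variable calculus check that $f$ is monotone non-decreasing on $[2,\infty)$: its derivative has the same sign as $u - \log u - 1$, which is positive for $u \geq 2$. Therefore $f(u) \geq f(2) = 1 - 1/\log 4 > 0$ for every admissible $u$, so taking $\useconstant{c:concentration}$ equal to (any positive number below) $1 - 1/\log 4$ gives $I(A) \geq \useconstant{c:concentration} A \log(A/\rho)$, which is the claimed bound.

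This is a textbook computation and I do not expect a genuine obstacle. The only point requiring care is to verify that the reduction to $f$ is valid for \emph{all} $\rho > 0$ including small $\rho$ with small integer $A$ (e.g.\ $\rho < 1/2$, $A = 1$): the hypothesis $A \geq 2\rho$ forces $u \geq 2$ in every case, so the monotonicity argument covers the full range uniformly without case analysis.
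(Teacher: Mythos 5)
Your proof is correct and follows exactly the same route as the paper's: the exponential Chernoff bound with $\lambda = \log(A/\rho)$. The only difference is that you explicitly justify the final comparison $A\log\tfrac{A}{\rho} - A + \rho \geq \useconstant{c:concentration}\, A\log\tfrac{A}{\rho}$ via the monotonicity of $f(u)=1-\tfrac{1}{\log u}+\tfrac{1}{u\log u}$ on $[2,\infty)$, whereas the paper states this step without proof; your argument correctly yields the explicit admissible constant $\useconstant{c:concentration}=1-1/\log 4$.
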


\begin{remark}\label{remark:concentration}
In particular we obtain the bound
\begin{equation}
\P\left[X \geq A \right] \leq \exp\left\{-\useconstant{c:concentration} A \charf{\{A \geq 10\rho\}} \log10\right\},
\end{equation}
for any positive integer $A \geq 0$.
\end{remark}

\begin{proof}
Use Markov's inequality with $\lambda=\log \frac{A}{\rho}$ to obtain
\begin{equation}\label{eq:exponential_markov}
\begin{split}
\P\left[X \geq A \right] & = \P\left[e^{\lambda X} \geq e^{\lambda A} \right] \\
& \leq \exp\left\{ -\lambda A + \rho \left(e^{\lambda}-1\right)\right\} \\
& \leq \exp\left\{ - A \log \frac{A}{\rho} + \rho \left(\frac{A}{\rho}-1 \right)\right\} \\
& \leq \exp\left\{ - \useconstant{c:concentration} A \log \frac{A}{\rho}\right\}.
\end{split}
\end{equation}
\end{proof}

The following lemma provides another concentration bound for Poisson random variables.
\begin{lemma}\label{lemma:concentration}
For $\rho>0$, if $X \sim \poisson(\rho)$ then
\begin{equation}
\P\left[X \geq A \right] \leq e^{2\rho}e^{-A},
\end{equation}
for any $A>0$.
\end{lemma}

\begin{proof}
Proceeding as in~\eqref{eq:exponential_markov}, we have
\begin{equation}
\P\left[X \geq A \right] = \P\left[ e^{X} \geq e^{A} \right] \leq \exp\left\{ - A + \rho (e-1)\right\} \leq e^{2\rho}e^{-A},
\end{equation}
concluding the proof.
\end{proof}

\newconstant{c:rw_deviation}

\par Our next lemma regards linear deviations of the biased random walk from its mean.
\begin{lemma}\label{lemma:rw_deviation}
For any $\epsilon >0$, there exists a positive constant $\useconstant{c:rw_deviation}=\useconstant{c:rw_deviation}(p(\cdot), d, \epsilon)$ such that, for any $u \geq 0$,
\begin{equation}
\P\left[ ||X_{t}-\vec{v}t|| \geq \epsilon u, \text{ for some } t \in [0,u]\right] \leq \useconstant{c:rw_deviation}^{-1}e^{-\useconstant{c:rw_deviation} u}.
\end{equation}
\end{lemma}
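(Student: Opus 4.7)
The plan is to reduce the statement to a one-dimensional estimate on a single coordinate and then apply a standard exponential martingale argument combined with Doob's maximal inequality.

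First I would observe that, by a union bound over the $d$ coordinates and over the two signs, it suffices to show that for each $i \in [d]$ there exists $c > 0$ such that
\begin{equation}
\P\left[\sup_{t \in [0,u]} \left(X_t^{(i)} - v_i t\right) \geq \tfrac{\epsilon}{\sqrt{d}} u \right] \leq c^{-1} e^{-c u},
\end{equation}
and the analogous bound for the infimum. Fix such an $i$ and write $\tilde X_t := X_t^{(i)} - v_i t$. Since $X_t^{(i)}$ is a continuous-time nearest-neighbor random walk, a direct computation gives
\begin{equation}
\E\!\left[e^{\lambda X_t^{(i)}}\right] = \exp\{t \phi(\lambda)\}, \qquad \phi(\lambda) = p(e_i)(e^{\lambda}-1)+p(-e_i)(e^{-\lambda}-1),
\end{equation}
so that $\psi(\lambda) := \phi(\lambda) - \lambda v_i$ satisfies $\psi(0) = \psi'(0) = 0$ and, being convex with $\psi'' $ bounded near $0$, is $O(\lambda^2)$ for $\lambda$ small. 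In particular $\psi(\lambda) \geq 0$ in a neighborhood of the origin, and by Jensen's inequality $\psi(\lambda) \geq 0$ for all $\lambda \in \R$.

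Next I would introduce the positive martingale $N_t := \exp\{\lambda \tilde X_t - t\psi(\lambda)\}$, which has $N_0 = 1$. Using $\psi(\lambda) \geq 0$, for every $s \leq u$ we have $e^{\lambda \tilde X_s} = N_s e^{s \psi(\lambda)} \leq e^{u\psi(\lambda)} N_s$, so that
\begin{equation}
\P\!\left[\sup_{s \leq u} \tilde X_s \geq \tfrac{\epsilon}{\sqrt{d}}\, u\right] \leq \P\!\left[\sup_{s \leq u} N_s \geq \exp\!\left(u\!\left(\tfrac{\lambda \epsilon}{\sqrt{d}} - \psi(\lambda)\right)\right)\right] \leq \exp\!\left\{-u\!\left(\tfrac{\lambda \epsilon}{\sqrt{d}} - \psi(\lambda)\right)\right\},
\end{equation}
where in the last step I invoke Doob's maximal inequality for the non-negative martingale $N_s$. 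Since $\psi(\lambda) = O(\lambda^2)$ as $\lambda \to 0$, there exists a sufficiently small $\lambda > 0$ (depending on $\epsilon$, $d$, and $p(\cdot)$) for which $\lambda \epsilon/\sqrt{d} - \psi(\lambda) > 0$, yielding the desired stretched-exponential bound.

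The symmetric bound for $\inf_{s \leq u}\tilde X_s$ follows by repeating the argument with $-\lambda$ in place of $\lambda$, using the reflected exponential martingale. Combining these one-dimensional estimates via the initial union bound yields the lemma with a single constant $\useconstant{c:rw_deviation}$ obtained by taking the minimum of the rates across coordinates and signs. There is no genuine obstacle here: the only thing to verify carefully is that the function $\psi$ has a zero of order at least two at the origin, which ensures the optimization in $\lambda$ produces a genuinely exponential decay rather than a polynomial one; this is guaranteed by the explicit formula for $\phi$ above.
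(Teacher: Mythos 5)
Your proof is correct and takes essentially the same route as the paper: reduce to one coordinate via a union bound, then apply an exponential-martingale argument together with Doob's maximal inequality and optimize over a small $\lambda$. The paper first converts $X^{(i)}$ to a rate-one walk and applies Doob to the submartingale $e^{\lambda|Y_t-\bar v_i t|}$, whereas you use the exponential martingale $N_t$ directly---an immaterial implementation difference (and note the decay you obtain is genuinely exponential, not ``stretched-exponential'' as you write at the end).
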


\begin{proof}
First notice that union bound allows us to write
\begin{equation}\label{eq:deviation_1}
\P\left[ ||X_{t}-\vec{v}t|| \geq \epsilon u, \text{ for some } t \in [0,u]\right] \leq \sum_{i=1}^{d}\P\left[ |X_{t}^{i}-v_{i}t| \geq \frac{\epsilon}{d} u, \text{ for some } t \in [0,u]\right],
\end{equation}
where $\left(X_{t}^{i}\right)_{t \geq 0}$ is a continuous time random walk in $\Z$ that jumps to the right with rate $p(e_{i})$ and to the left with rate $p(-e_{i})$, and $v_{i}=p(e_{i})-p(-e_{i})$. Alternatively, we can construct $X^{i}$ from a standard biased random walk $(Y_{t})_{t \geq 0}$ that jumps with rate one and with distribution $q(1)=1-q(-1)=\frac{p(e_{i})}{p(e_{i})+p(-e_{i})}$. Even though the values $q(1)$, $q(-1)$ (and, by consequence, $(Y_{t})_{t \geq 0}$) depend on $i \in [d]$, we omit this dependence. With this construction, if $\bar{v}_{i}=q(1)-q(-1)$, we obtain, for all $i \in [d]$,
\begin{equation}\label{eq:deviation_2}
\begin{split}
\P & \left[ |X_{t}^{i}-v_{i}t| \geq \frac{\epsilon}{d} u, \text{ for some } t \in [0,u]\right] \\
& \qquad \qquad = \P\left[ |Y_{t}-\bar{v}_{i}t| \geq \frac{\epsilon}{d} u, \text{ for some } t \in [0,(p(e_{i})+p(-e_{i}))u]\right].
\end{split}
\end{equation}

We now work with the last quantity. We will prove that
\begin{equation}\label{eq:deviation_3}
\P\left[ |Y_{t}-\bar{v}_{i}t| \geq \epsilon u, \text{ for some } t \in [0,u]\right] \leq c^{-1}e^{-cu},
\end{equation}
for some suitable choice of $c>0$ that depends only on $p(\cdot)$ and $\epsilon>0$. Combining this with~\eqref{eq:deviation_1} and~\eqref{eq:deviation_2} concludes the proof of the lemma.

In order to prove~\eqref{eq:deviation_3}, notice first that $t \mapsto Y_{t}^{i}-\bar{v}_{i}t$ is a continuous time martingale. Hence, Doob's maximal Inequality allows us to conclude that, for any $\lambda>0$,
\begin{equation}\label{eq:deviation_4}
\begin{split}
\P & \left[ |Y_{t}-\bar{v}_{i}t| \geq \epsilon u, \text{ for some } t \in [0,u]\right] \leq \E\left[\exp{\left\{\lambda |Y_{u}-\bar{v}_{i}u|\right\}}\right] e^{-\lambda \epsilon u} \\
& \qquad \qquad \leq e^{-\lambda \epsilon u}\Big(\E\left[\exp{\left\{\lambda (Y_{u}-\bar{v}_{i}u)\right\}}\right]+\E\left[\exp{\left\{-\lambda (Y_{u}-\bar{v}_{i}u)\right\}}\right] \Big).
\end{split}
\end{equation}

By writing $Y_{u}^{i}$ as the sum of a Poissonian number of Bernoulli random variables, one obtains, for every $\lambda \in \R$,
\begin{equation}\label{eq:deviation_5}
\E\left[e^{\lambda Y_{u}}\right] = \exp\left\{u\left(q(1)e^{\lambda}+q(-1)e^{-\lambda} -1\right)\right\}.
\end{equation}

Combining~\eqref{eq:deviation_4} and~\eqref{eq:deviation_5}, we can bound
\begin{equation}
\begin{split}
\P & \left[ |Y_{t}-\bar{v}_{i}t| \geq \epsilon u, \text{ for some } t \in [0,u]\right] \\
& \qquad \qquad \leq e^{-\lambda \epsilon u}\left(\E\left[\exp{\left\{\lambda (Y_{u}-\bar{v}_{i}u)\right\}}\right]+\E\left[\exp{\left\{-\lambda (Y_{u}-\bar{v}_{i}u)\right\}}\right] \right) \\
& \qquad \qquad \leq \exp\left\{u\left(q(1)e^{\lambda}+q(-1)e^{-\lambda}-1-\lambda \bar{v}_{i}-\lambda \epsilon\right)\right\}) \\
& \qquad \qquad \qquad +\exp\left\{u\left(q(1)e^{-\lambda}+q(-1)e^{\lambda}-1+\lambda \bar{v}_{i}-\lambda \epsilon\right)\right\}) \\
& \qquad \qquad \leq 2e^{-cu},
\end{split}
\end{equation}
if $\lambda>0$ is taken small enough, depending of the values of $q(1)$, $q(-1)$ and $\epsilon$. This yields~\eqref{eq:deviation_3} and, together with~\eqref{eq:deviation_1} and~\eqref{eq:deviation_2}, concludes the proof.
\end{proof}

\section{Sampling of random walks}
~
\par In this section, we provide bounds for transition probabilities for biased random walks and collect some consequences of these bounds.

\newconstant{c:zmrw}

\par We first consider zero-mean random walks. The following lemma is a consequence of an analogous result for discrete time balanced random walks.
\begin{lemma}\label{lemma:heat_kernel_zmrw}
Let $(X_{s})_{s \geq 0}$ denote a nearest-neighbor continuous-time random walk with transition probability that has mean zero. There exists a positive constant $\useconstant{c:zmrw}>0$ such that, if all $t \geq 1$ and $x \in \Z^d$ satisfying $||x|| \leq \sqrt{t}$, then
\begin{equation}
\P_{0}[X_{t}=x] \geq \frac{\useconstant{c:zmrw}}{t^{\sfrac{d}{2}}}.
\end{equation}
\end{lemma}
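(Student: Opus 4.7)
The plan is to reduce to the analogous local central limit theorem for the discrete-time balanced nearest-neighbor random walk with step distribution $p(\cdot)$. Let $(\hat{X}_n)_{n \geq 0}$ denote the discrete-time walk on $\Z^d$ with $\hat{X}_0 = 0$ and i.i.d.\ increments of law $p(\cdot)$. Conditioning the continuous-time walk on its Poissonian number of jumps $N_t \sim \poisson(t)$ up to time $t$ yields
\begin{equation*}
\P_0[X_t = x] = \sum_{n=0}^{\infty} e^{-t}\frac{t^n}{n!}\, \P_0[\hat{X}_n = x],
\end{equation*}
so the task reduces to bounding the discrete-time transition probabilities and showing that Poisson averaging smoothes them enough to remove the bipartite obstruction.

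The first step is to invoke the classical discrete LCLT: since $p(\cdot)$ is nearest-neighbor, mean zero, and has non-degenerate covariance, there exist positive constants $c, C$ such that, for every $n \geq 1$ and every $x \in \Z^d$ of the correct parity (i.e.\ $n + x_1 + \dots + x_d$ is even) with $\|x\| \leq C\sqrt{n}$, one has $\P_0[\hat{X}_n = x] \geq c/n^{d/2}$. This is the standard local CLT for balanced nearest-neighbor walks on $\Z^d$, which is the ``analogous result'' the paper alludes to.

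The second step is to restrict the Poisson sum to indices $n$ in the window $I_t = [t - \sqrt{t},\, t + \sqrt{t}]$. Stirling-type estimates for the Poisson distribution give $\P[N_t = n] \geq c_1/\sqrt{t}$ for every $n \in I_t$, and $I_t$ contains of order $\sqrt{t}$ integers of each parity. For $t \geq 1$ and $\|x\| \leq \sqrt{t}$, one has $\|x\| \leq C\sqrt{n}$ uniformly over $n \in I_t$, so the LCLT applies and gives $\P_0[\hat{X}_n = x] \geq c_2/t^{d/2}$ for each such $n$ of the correct parity. Summing only over the half of $I_t$ with the right parity,
\begin{equation*}
\P_0[X_t = x] \geq \sum_{\substack{n \in I_t \\ n + \sum_i x_i \text{ even}}} \P[N_t = n]\, \P_0[\hat{X}_n = x] \geq c_3 \cdot \sqrt{t} \cdot \frac{1}{\sqrt{t}} \cdot \frac{1}{t^{d/2}} = \frac{c_3}{t^{d/2}},
\end{equation*}
which is the claimed bound after relabeling the constant.

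The main obstacle is exactly the parity constraint in the discrete LCLT: the bipartite structure of nearest-neighbor walks on $\Z^d$ forbids a uniform lower bound at a single $n$, since half of the $n$'s contribute zero. The Poisson mixing resolves this cleanly because a window of length $\sqrt{t}$ in the Poisson law carries $\Theta(1)$ mass and contains $\Theta(\sqrt{t})$ indices of each parity, so continuous-time smoothing dissolves the bipartite artifact and no further work is needed.
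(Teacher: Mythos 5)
Your proof is correct, but it takes a genuinely different route from the paper's. The paper realizes the continuous-time walk as a \emph{lazy} discrete-time walk (hold probability $\tfrac{1}{2}$) driven by a Poisson clock of rate $2$; since the lazy walk is aperiodic, the local CLT lower bound cited from Lawler--Limic holds with no parity constraint, and the proof reduces to the one-line observation that the rate-$2$ Poisson clock lands in $[t,3t]$ with probability bounded away from zero. You instead keep the natural decomposition into the non-lazy discrete walk plus a rate-$1$ Poisson clock, and you confront the bipartite obstruction directly: you restrict to the correct-parity half of the window $[t-\sqrt t,\,t+\sqrt t]$, note that this half still carries $\Theta(1)$ Poisson mass (since $\Theta(\sqrt t)$ admissible indices each get mass $\Theta(t^{-1/2})$), and apply the parity-restricted LCLT termwise. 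Both arguments are sound. The paper's lazification is the slicker device, since it dissolves the parity issue before any estimate is made and the resulting Poisson estimate is crude (a window of length $t$, not $\sqrt t$); yours avoids introducing an auxiliary lazy process but needs the sharper Poisson window bound $\P[N_t=n]\geq c/\sqrt t$ on $[t-\sqrt t,\,t+\sqrt t]$. One small point you should make explicit in your version: the Stirling and LCLT asymptotics only bite for $t$ large, so the regime $1\leq t\leq T_0$ should be handled separately by positivity (e.g.\ $\P_0[X_t=x]>0$ for $\|x\|\leq \sqrt t$) and compactness, absorbing it into the constant.
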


\begin{proof}
First notice that $(X_{s})_{s \geq 0}$ may be realized as a discrete-time zero-mean lazy random walk $(\tilde{X}_{n})_{n \geq0}$ together with a Poisson process $(P_{s})_{s \geq 0}$ in $\R_{+}$ with intensity $2$ that controls the jump times.

According to the remark after Proposition 2.1.2 from~\cite{ll}, there exists a constant $c>0$ such that, if $||x|| \leq \sqrt{n}$, then
\begin{equation}
\P_{x}[\tilde{X}_{n}=x] \geq \frac{c}{n^{\sfrac{d}{2}}}.
\end{equation} 

We now bound
\begin{equation}
\begin{split}
\P_{x}[X_{t}=0] & \geq \sum_{k: \, |k-2t| \leq t} \P[P_{t}=k]\P_{x}[\tilde{X}_{k}=x] \geq \sum_{k: \, |k-2t| \leq t} \P[P_{t}=k]\frac{c}{t^{\sfrac{d}{2}}} \\
& \geq \frac{c}{t^{\sfrac{d}{2}}}\P[|P_{t}-2t| \leq t] \geq \frac{\useconstant{c:zmrw}}{t^{\sfrac{d}{2}}},
\end{split}
\end{equation}
concluding the proof.
\end{proof}

\newconstant{c:brw_time}
\newconstant{c:brw_space}
\newconstant{c:brw_estimate}

\par Our next goal is to obtain an analogous result to Lemma~\ref{lemma:heat_kernel_zmrw} for biased random walks.
\begin{lemma}\label{lemma:heat_kernel_brw}
Let $(X_{s})_{s \geq 0}$ denote a biased nearest-neighbor random walk with transition probability $p(\cdot)$. Assume $p(e_{i})>0$ and $p(-e_{i})>0$, for all $i \in [d]$, and set
\begin{equation}
v = (v_{1}, \dots, v_{d}) = \sum_{y \sim 0} p(y)y \in \R^d.
\end{equation}
There exist positive constants $\useconstant{c:brw_time}$, $\useconstant{c:brw_space}$, and $\useconstant{c:brw_estimate}$ such that, if $t \geq \useconstant{c:brw_time}$ and $x \in \Z^d$ is such that $||x-vt|| \leq \useconstant{c:brw_space}\sqrt{t}$, then
\begin{equation}
\P_{0}[X_{t}=x] \geq \frac{\useconstant{c:brw_estimate}}{t^{\sfrac{d}{2}}}.
\end{equation}
\end{lemma}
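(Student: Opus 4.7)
The plan is to reduce the $d$-dimensional local estimate to a one-dimensional local central limit theorem, exploiting the fact that jumps in different coordinate directions are governed by independent Poisson clocks.

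The first step is to realise $(X_t)_{t \geq 0}$ via $2d$ independent Poisson processes: for each $i \in [d]$, let $N_i^{+}(t)$ and $N_i^{-}(t)$ count the jumps in directions $+e_i$ and $-e_i$ respectively, so that $N_i^{\pm}(t) \sim \poisson(p(\pm e_i) t)$ and all $2d$ counts are mutually independent. Setting $X_t^{(i)} := N_i^{+}(t) - N_i^{-}(t)$, the coordinates of $X_t$ are independent and
\begin{equation*}
\P_0[X_t = x] \;=\; \prod_{i=1}^{d} \P\bigl[X_t^{(i)} = x_i\bigr].
\end{equation*}
It therefore suffices to prove a one-dimensional bound: for each $i$ there exist constants $\alpha_i, \beta_i, \gamma_i > 0$ depending only on $p(\pm e_i)$ such that, whenever $t \geq \alpha_i$ and $|x_i - v_i t| \leq \beta_i \sqrt{t}$,
\begin{equation*}
\P\bigl[X_t^{(i)} = x_i\bigr] \;\geq\; \gamma_i / \sqrt{t}.
\end{equation*}
Multiplying the $d$ factors and taking $\useconstant{c:brw_space} := d^{-1/2} \min_i \beta_i$ together with $\useconstant{c:brw_estimate} := \prod_i \gamma_i$ delivers the lemma (with $\useconstant{c:brw_time} := \max_i \alpha_i$).

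For the one-dimensional estimate I would parametrize $k^{\pm} = (k \pm x_i)/2$ and let $k$ run over integers in a window of radius $\sqrt{t}$ around the mean $(p(e_i) + p(-e_i)) t$ having the parity compatible with $x_i$. Because $v_i t = (p(e_i) - p(-e_i)) t$ and $|x_i - v_i t| \leq \beta_i \sqrt{t}$, every admissible pair satisfies $|k^{\pm} - p(\pm e_i) t| \leq C(\beta_i) \sqrt{t}$. Stirling's formula applied to the two Poisson mass functions then gives
\begin{equation*}
\P\bigl[N_i^{+}(t) = k^{+}\bigr]\,\P\bigl[N_i^{-}(t) = k^{-}\bigr] \;\geq\; c/t
\end{equation*}
uniformly over these pairs, with $c = c(p(\pm e_i)) > 0$. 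Summing over the $\Theta(\sqrt{t})$ parity-correct values of $k$ produces a lower bound of the required order $1/\sqrt{t}$.

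The main obstacle is purely computational: checking, via Stirling's estimate in the form $\sqrt{2\pi n}\, n^{n} e^{-n} \leq n! \leq e\, n^{n+1/2} e^{-n}$, that when $k^{\pm}$ lies within $C\sqrt{t}$ of its mean the quadratic expansion of $-\lambda + k - k \log(k/\lambda)$ stays bounded, yielding the uniform $c/\sqrt{t}$ lower bound on each marginal; and verifying that the parity constraint only halves the count of admissible $k$, not the order of the sum. One can sidestep the parity bookkeeping entirely by invoking a black-box local CLT for the Skellam distribution (difference of two independent Poissons), but the direct Stirling argument is elementary and produces explicit constants with no appeal to external results beyond Lemma~\ref{lemma:heat_kernel_zmrw}-style tools.
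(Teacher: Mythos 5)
Your argument is correct but follows a genuinely different route from the paper's proof. The paper splits each coordinate's jump rate into a symmetric part (rate $p_i=\min\{p(e_i),p(-e_i)\}$ in both directions) plus a one-sided drift part (rate $|v_i|$ toward the favored direction), writing
$X_t=\sum_i \sign(v_i)Y^i_t e_i+\tilde X_{t(1-\sum_i|v_i|)}$
with $Y^i_t\sim\poisson(t|v_i|)$ independent and $\tilde X$ a zero-mean walk; it then conditions on the value of the drift vector, invokes the zero-mean local estimate of Lemma~\ref{lemma:heat_kernel_zmrw} for $\tilde X$, and uses the CLT to show the conditioning event carries uniformly positive probability. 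You instead exploit full coordinate independence: by Poisson thinning the $2d$ directional jump counts are independent Poissons, so $\P_0[X_t=x]$ factors exactly as a product of $d$ Skellam probabilities $\P[X_t^{(i)}=x_i]$, and each factor is lower-bounded by a direct Stirling computation that plays the role of a one-dimensional local CLT. The net effect is a more elementary and self-contained proof: you bypass Lemma~\ref{lemma:heat_kernel_zmrw} (and its LLT input) entirely, at the cost of doing the Stirling bookkeeping by hand, including the parity constraint on $k=k^++k^-$. A minor remark: the factor $d^{-1/2}$ in your choice $\useconstant{c:brw_space}=d^{-1/2}\min_i\beta_i$ is not needed, since $\|x-vt\|\le\useconstant{c:brw_space}\sqrt{t}$ already yields $|x_i-v_it|\le\useconstant{c:brw_space}\sqrt{t}$ coordinatewise; this is harmless but slightly wasteful.
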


\par The proof in based in writing the biased random walk as the sum of drift terms and a zero-mean continuous time random walk, and using the estimate provided by Lemma~\ref{lemma:heat_kernel_zmrw}.
\begin{proof}
For each $i \in [d]$, let
\begin{equation}
p_{i}=\min \{p(e_{i}), p(-e_{i})\},
\end{equation}
and write $Z=2\sum_{i=1}^{d}p_{i}$. Let $(\tilde{X}_{s})_{s \geq 0}$ be a continuous-time random walk with transition probability $q(\cdot)$ given by $q(e_{i})=q(-e_{i})=\frac{p_{i}}{Z}$.

We can write
\begin{equation}
X_{t} = \sum_{i=1}^{d} \sign(v_{i})Y^{i}_{t}e_{i}+\tilde{X}_{t\left(1-\sum_{i=1}^{d}|v_{i}|\right)},
\end{equation}
where $Y^{i}_{t} \sim \poisson(t|v_{i}|)$ are independent.

Split the probability of $X_{t}=x$ according to the value of the sum $\sum_{i=1}^{d} \sign(v_{i})Y^{i}_{t}e_{i}$. This yields
\begin{equation}
\P_{0}[X_{t}=x] = \sum_{y \in \Z^{d}}\P\left[\sum_{i=1}^{d} \sign(v_{i})Y^{i}_{t}e_{i} = y \right] \P_{0}\left[ \tilde{X}_{t\left(1-\sum_{i=1}^{d}|v_{i}|\right)}=x-y\right].
\end{equation}

Set $\useconstant{c:brw_space} = \frac{1}{2}\sqrt{1-\sum_{i=1}^{d}|v_{i}|}$, and observe that, if $||y-vt|| \leq \useconstant{c:brw_space}\sqrt{t}$, then
\begin{equation}
||x-y|| \leq ||x-vt||+||y-vt|| \leq \sqrt{t\left(1-\sum_{i=1}^{d}|v_{i}|\right)},
\end{equation}
so that Lemma~\ref{lemma:heat_kernel_zmrw} implies
\begin{equation}
\begin{split}
\P_{0}[X_{t}=x] & \geq \sum_{y : \, ||y-vt|| \leq \useconstant{c:brw_space}\sqrt{t}}\P\left[\sum_{i=1}^{d} \sign(v_{i})Y^{i}_{t}e_{i} = y \right] \frac{\useconstant{c:zmrw}}{t^{\sfrac{d}{2}}} \\
& \geq \frac{\useconstant{c:zmrw}}{t^{\sfrac{d}{2}}} \P \left[ \left| \left|\sum_{i=1}^{d} \sign(v_{i})Y^{i}_{t}e_{i}-vt \right|\right| \leq \useconstant{c:brw_space}\sqrt{t} \right].
\end{split}
\end{equation}
The central limit theorem implies that, if $t$ is large enough, the last probability above is uniformly bounded from below by some positive constant. This concludes the proof.
\end{proof}

\newconstant{c:rw_meeting}

\par The following proposition states that, provided two random walks do not start very far away, the probability that they meet at a given time $t$ does not decay very fast.
\begin{prop}\label{prop:coupling}
Let $p(\cdot)$ be a transition probability satisfying all hypotheses from Lemma~\ref{lemma:heat_kernel_brw}. There exists a positive constant $\useconstant{c:rw_meeting}>0$ such that, for all $t \geq \useconstant{c:brw_time}$ and $x, y \in \Z^d$ such that $||x-y|| \leq \frac{\useconstant{c:brw_space}}{2}\sqrt{t}$, the following holds. If $(X_{s})_{s \geq 0}$ and $(Y_{s})_{s \geq 0}$ are independent random walks with jump distribution $p(\cdot)$ and initial positions $X_{0}=x$ and $Y_{0}=y$, then
\begin{equation}
\P[X_{t}=Y_{t}] \geq  \frac{\useconstant{c:rw_meeting}}{t^{\sfrac{d}{2}}}.
\end{equation}
\end{prop}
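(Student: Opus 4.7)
The approach is to use independence of $X$ and $Y$ to decompose the meeting probability as a sum over possible meeting points and then lower bound each factor via the local central limit estimate of Lemma~\ref{lemma:heat_kernel_brw}. Concretely, I would write
\begin{equation}
\P[X_{t}=Y_{t}] = \sum_{z \in \Z^{d}} \P[X_{t}=z]\,\P[Y_{t}=z]
\end{equation}
and restrict the sum to a region where both factors are simultaneously under control.

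Since $X$ and $Y$ share the same drift $\vec{v}$, at time $t$ they concentrate around $x+\vec{v}t$ and $y+\vec{v}t$ respectively. The hypothesis $\|x-y\| \leq \useconstant{c:brw_space}\sqrt{t}/2$ is precisely calibrated so that these two concentration windows have a $\sqrt{t}$-scale overlap. I would therefore restrict the sum above to the set
\begin{equation}
\mathcal{Z}_{t} = \left\{z \in \Z^{d} : \|z - x - \vec{v}t\| \leq \tfrac{\useconstant{c:brw_space}}{2}\sqrt{t}\right\}.
\end{equation}
For every $z \in \mathcal{Z}_{t}$, translation invariance gives $\P[X_{t}=z]=\P_{0}[X_{t}=z-x]$, and since $\|(z-x)-\vec{v}t\| \leq \useconstant{c:brw_space}\sqrt{t}/2 \leq \useconstant{c:brw_space}\sqrt{t}$, Lemma~\ref{lemma:heat_kernel_brw} gives $\P[X_{t}=z] \geq \useconstant{c:brw_estimate}\,t^{-d/2}$. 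For the $Y$ factor, the triangle inequality yields
\begin{equation}
\|(z-y)-\vec{v}t\| \leq \|z - x - \vec{v}t\| + \|x-y\| \leq \tfrac{\useconstant{c:brw_space}}{2}\sqrt{t} + \tfrac{\useconstant{c:brw_space}}{2}\sqrt{t} = \useconstant{c:brw_space}\sqrt{t},
\end{equation}
so Lemma~\ref{lemma:heat_kernel_brw} applies to $Y$ as well and delivers $\P[Y_{t}=z] \geq \useconstant{c:brw_estimate}\,t^{-d/2}$.

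To conclude, a standard lattice-point count shows that $|\mathcal{Z}_{t}| \geq c\,t^{d/2}$ for some $c=c(d,\useconstant{c:brw_space})>0$, provided $t \geq \useconstant{c:brw_time}$ is large enough (which can be ensured by increasing $\useconstant{c:brw_time}$ if necessary). Combining these bounds gives
\begin{equation}
\P[X_{t}=Y_{t}] \geq \sum_{z \in \mathcal{Z}_{t}} \P[X_{t}=z]\,\P[Y_{t}=z] \geq c\,t^{d/2} \cdot \left(\frac{\useconstant{c:brw_estimate}}{t^{d/2}}\right)^{2} = \frac{\useconstant{c:rw_meeting}}{t^{d/2}},
\end{equation}
with $\useconstant{c:rw_meeting}=c\,\useconstant{c:brw_estimate}^{2}$, which is the desired bound. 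There is no real obstacle here: the entire content of the proposition is already present in Lemma~\ref{lemma:heat_kernel_brw}, and the role of the factor $\tfrac{1}{2}$ in the hypothesis is precisely to ensure that the half-radius ball around $x+\vec{v}t$ sits inside the full-radius ball around $y+\vec{v}t$, so that the heat-kernel lower bound can be invoked simultaneously for both walks at every $z \in \mathcal{Z}_{t}$.
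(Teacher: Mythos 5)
Your proof is correct and follows essentially the same route as the paper: decompose $\P[X_t = Y_t]$ over meeting points via independence, restrict to the $L^\infty$-ball of radius $\tfrac{\useconstant{c:brw_space}}{2}\sqrt{t}$ around $x + \vec v t$, lower bound both factors by Lemma~\ref{lemma:heat_kernel_brw}, and count lattice points. You actually spell out more explicitly than the paper does why the heat-kernel bound also applies to $Y$ (the triangle-inequality step), which is the one place the paper is a bit terse, saying only ``the same holds for the random walk $Y$.''
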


\begin{proof}
Recall Lemma~\ref{lemma:heat_kernel_brw} and fix $\delta>0$ such that
\begin{equation}
\delta \leq \frac{\useconstant{c:brw_estimate}}{t^{\sfrac{d}{2}}} \left|B\left(vt, \frac{\useconstant{c:brw_space}}{2}\sqrt{t} \right) \cap \Z^d \right|,
\end{equation}
where $B(a,r)$ denotes the $L^{\infty}$-ball of $\R^{d}$ with center $a$ and radius $r$. Using that there exists $c>0$ such that $\left|B\left(vt, \frac{\useconstant{c:brw_space}}{2}\sqrt{t} \right) \cap \Z^d \right| \geq c t^{\sfrac{d}{2}}$, for all $t \geq \useconstant{c:brw_time}$, we obtain that $\delta$ can be chosen uniformly positive, for all $t$ large enough.

By Lemma~\ref{lemma:heat_kernel_brw}, we have
\begin{equation}
\P[X_{t}=z] \geq \frac{\useconstant{c:brw_estimate}}{t^{\sfrac{d}{2}}},
\end{equation}
for all $z \in B\left(x+vt, \frac{\useconstant{c:brw_space}}{2}\sqrt{t} \right) \cap \Z^d$. The same holds for the random walk $Y$. From this, we conclude
\begin{equation}
P[X_{t}=Y_{t}] \geq \sum_{z \in B\left(x+vt, \frac{\useconstant{c:brw_space}}{2}\sqrt{t} \right) \cap \Z^d} \P[X_{t}=z] \P[Y_{t}=z] \geq \frac{\useconstant{c:brw_estimate}}{t^{\sfrac{d}{2}}} \frac{\useconstant{c:brw_estimate}}{t^{\sfrac{d}{2}}} \delta \useconstant{c:brw_estimate} t^{\sfrac{d}{2}} = \frac{\useconstant{c:rw_meeting}}{t^{\sfrac{d}{2}}},
\end{equation}
which concludes the proof.
\end{proof}

\bibliographystyle{plain}
\bibliography{mybib}

\end{document}